\theoremstyle{plain}
\newtheorem{thm}{Theorem}[section]
\newtheorem*{thm*}{Theorem}
\newtheorem*{cor*}{Corollary}
\newtheorem{prop}[thm]{Proposition}
\newtheorem{lemma}[thm]{Lemma}
\newtheorem{lem}[thm]{Lemma}
\newtheorem{cor}[thm]{Corollary}
\newtheorem{claim}{Claim}
\newtheorem*{claim*}{Claim}
\newtheorem*{acknowledgement}{Acknowledgement}
\theoremstyle{definition}
\newtheorem{defn}[thm]{Definition}
\newtheorem{ex}[thm]{Example}
\newtheorem{Example}[thm]{Example}
\theoremstyle{remark}
\numberwithin{equation}{thm}
\def\Ext{\operatorname{Ext}}
\def\E{\operatorname{E}}
\def\Hom{\operatorname{Hom}}
\def\End{\mathrm{End}}
\def\Coker{\mathrm{Coker}}
\def\m{\mathfrak m}
\def\n{\mathfrak n}
\def\N{\Bbb N}
\newcommand{\rme}{\mathrm{e}}
\newcommand{\rmr}{\mathrm{r}}
\newcommand{\rmE}{\mathrm{E}}
\newcommand{\rmJ}{\mathrm{J}}
\newcommand{\rmQ}{\mathrm{Q}}
\newcommand{\calF}{\mathcal{F}}
\newcommand{\calG}{\mathcal{G}}
\newcommand{\calX}{\mathcal{X}}
\newcommand{\calY}{\mathcal{Y}}
\newcommand{\calZ}{\mathcal{Z}}
\newcommand{\fka}{\mathfrak{a}}
\newcommand{\fkm}{\mathfrak{m}}
\newcommand{\fkp}{\mathfrak{p}}
\newcommand{\mapright}[1]{%
\smash{\mathop{%
\hbox to 1cm{\rightarrowfill}}\limits^{#1}}}
\newcommand{\mapleft}[1]{%
\smash{\mathop{%
\hbox to 1cm{\leftarrowfill}}\limits_{#1}}}
\def\Spec{\operatorname{Spec}}
\def\Max{\operatorname{Max}}
\def\Ass{\operatorname{Ass}}
\def\height{\mathrm{ht}}
\def\depth{\operatorname{depth}}
\def\grade{\mathrm{grade}}
\def\ol{\overline}
\def\N0{\mathbb{N}_{0}}
\title[The Gorenstein property and correspondences]{Correspondence between trace ideals and birational extensions  with application to the analysis of 
the Gorenstein property of rings}
\author{Shiro Goto}
\address{Department of Mathematics, School of Science and Technology, Meiji University, 1-1-1 Higashi-mita, Tama-ku, Kawasaki 214-8571, Japan}
\email{shirogoto@gmail.com}
\author{Ryotaro Isobe}
\address{Department of Mathematics and Informatics, Graduate School of Science and Technology, Chiba University, Yayoi-cho 1-33, Inage-ku, Chiba, 263-8522, Japan}
\email{axna4902@chiba-u.jp}
\author{Shinya Kumashiro}
\address{Department of Mathematics and Informatics, Graduate School of Science and Technology, Chiba University, Yayoi-cho 1-33, Inage-ku, Chiba, 263-8522, Japan}
\email{axwa4903@chiba-u.jp}
\thanks{2010 {\em Mathematics Subject Classification.} 13H10, 13H15, 13A30.}
\thanks{{\em Key words and phrases.} Cohen-Macaulay ring, Gorenstein ring, trace module, trace ideal, stable ideal, stable ring}
\thanks{The first author was partially supported by the JSPS Grant-in-Aid for Scientific Research (C) 16K05112. The second and third authors were partially supported by Birateral Programs (Joint Research) of JSPS and International Research Supporting Programs of Meiji University.}
\begin{document}
\maketitle

\setlength{\baselineskip}{18pt}

\begin{abstract}
Over an arbitrary commutative ring, correspondences among three sets, the set of trace ideals, the set of stable ideals, and the set of birational extensions of the base ring, are studied. The correspondences are well-behaved, if the base ring is a Gorenstein ring of dimension one. It is shown that with one extremal exception, the surjectivity of one of the correspondences characterizes the Gorenstein property of the base ring, provided it is a Cohen-Macaulay local ring of dimension one. Over a commutative Noetherian ring, a characterization of modules in which every submodule is a trace module is given. The notion of anti-stable rings is introduced, exploring their basic properties. 
\end{abstract}

{\footnotesize \tableofcontents}


\section{Introduction}\label{sec1}
This paper aims to explore the structure of (not necessarily Noetherian) commutative rings in connection with their trace ideals. Let $R$ be a commutative ring. For $R$-modules $M$ and $X$, let $$\tau_{M, X}: \Hom_R(M, X) \otimes_RM \to X$$ denote the $R$-linear map defined by $\tau_{M, X} (f \otimes m) =f(m)$ for all $f \in \Hom_R(M,X)$ and $m \in M$. We set $\tau_X(M) = \operatorname{Im} \tau_{M,X}$. Then, $\tau_X(M)$ is an $R$-submodule of $X$, and we say that an $R$-submodule $Y$ of $X$ is a {\em trace module} in $X$, if $Y =\tau_X(M)$ for some $R$-module $M$.  When $X=R$, we call trace modules in $R$, simply, {\em trace ideals} in $R$. There is a recent movement in the theory of trace ideals, raised by  of H. Lindo and N. Pande \cite{L, L2, LP}. Besides, J. Herzog, T. Hibi, and D. I. Stamate \cite{HHS} studied the traces of canonical modules, and gave interesting results. We explain below our motivation for the present researches and how this paper is organized, claiming the main results in it.

The present researches are strongly inspired by \cite{L, L2, LP}. In \cite{L2} Lindo asked when every ideal of a given ring $R$ is a trace ideal in it, and noted that this is the case when $R$ is a self-injective ring. Subsequently, Lindo and Pande \cite{LP} proved that the converse is also true if $R$ is a Noetherian local ring. Our researches have started from the following complete answer to their prediction, which we shall prove in Section 4.

\begin{thm}[Theorem \ref{1.3}]
Suppose that $R$ is a Noetherian ring and let $X$ be an $R$-module.
Then the following conditions are equivalent.
\begin{enumerate}[{\rm (1)}]
\item Every $R$-submodule of $X$ is a trace module in $X$.
\item Every cyclic $R$-submodule of $X$ is a trace module in $X$. 
\item There is an embedding $$0 \to X \to \bigoplus_{\m \in \Max R} \rmE_R (R/\m) $$
of $R$-modules, where for each $\m \in \Max R$, $\rmE_R(R/\m)$ denotes the injective envelope of the cyclic $R$-module $R/\m$.
\end{enumerate}
\end{thm}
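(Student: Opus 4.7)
The plan is to prove $(1) \Rightarrow (2)$ trivially and to cycle $(2) \Rightarrow (3) \Rightarrow (1)$. The key preliminary reformulation I would establish is that a submodule $Y \subseteq X$ is a trace module in $X$ if and only if $\tau_X(Y) = Y$, equivalently if and only if every $R$-linear map $g \colon Y \to X$ satisfies $g(Y) \subseteq Y$ (since $\tau_X(\tau_X(M)) = \tau_X(M)$ and one can always take $M = Y$). Specialized to the cyclic case, this reads: $Rx$ is a trace module in $X$ if and only if $(0 :_X \ann(x)) = Rx$, because an $R$-homomorphism $Rx \to X$ is determined by the image of $x$ in $(0 :_X \ann(x))$.

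For $(2) \Rightarrow (3)$, I would first show $\Ass_R X \subseteq \Max R$: if $\p = \ann(x) \in \Ass_R X$ and $s \in R \setminus \p$, then $\ann(sx) = \p$, and (2) gives $Rx = (0 :_X \p) = Rsx$; writing $x = tsx$ for some $t \in R$ yields $1 - ts \in \p$, so every $s \in R \setminus \p$ is a unit modulo $\p$, forcing $\p \in \Max R$. For each $\m \in \Max R$, applying (2) to an $x$ with $\ann(x) = \m$ produces $(0 :_X \m) = Rx \cong R/\m$, hence $\dim_{R/\m}(0 :_X \m) \leq 1$. Identifying this with the Bass number $\mu_0(\m, X)$ via $(0 :_X \m) = (0 :_X \m)_\m = (0 :_{X_\m} \m R_\m)$, the Matlis--Bass decomposition $\rmE_R(X) = \bigoplus_\p \rmE_R(R/\p)^{\mu_0(\p, X)}$ embeds into $\bigoplus_{\m \in \Max R} \rmE_R(R/\m)$, and composing with the essential embedding $X \hookrightarrow \rmE_R(X)$ yields (3).

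For $(3) \Rightarrow (1)$, set $\rmE := \bigoplus_{\m \in \Max R} \rmE_R(R/\m)$. Given $Y \subseteq X \hookrightarrow \rmE$ and $g \colon Y \to X$, extend the composition $Y \xrightarrow{g} X \hookrightarrow \rmE$ to $\tilde g \colon \rmE \to \rmE$ by injectivity of $\rmE$. Two structural inputs allow control of $\tilde g$: (i) $\Hom_R(\rmE_R(R/\m), \rmE_R(R/\m')) = 0$ for distinct maximal ideals, because any element in the image is killed by both $\m^n$ and $(\m')^{n'}$ hence by $\m^n + (\m')^{n'} = R$; so $\tilde g$ decomposes as $\bigoplus_\m \tilde g_\m$ with $\tilde g_\m \in \End_R(\rmE_R(R/\m)) \cong \widehat{R_\m}$; and (ii) for $z \in \rmE_R(R/\m)$ with $\m^n z = 0$, the action of $\widehat{R_\m}$ on $z$ factors through $\widehat{R_\m}/\m^n \widehat{R_\m} \cong R/\m^n$, so $\tilde g_\m(z) = r z$ for some $r \in R$. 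For $y = \sum_\m y_\m \in Y$ (a finite sum), the Chinese Remainder Theorem supplies $s_\m \in R$ with $s_\m \equiv 1$ modulo a sufficiently high power of $\m$ and $\equiv 0$ modulo the analogous powers of the other occurring maximals; then $s_\m y = y_\m$, so each $y_\m \in Ry \subseteq Y$, and $\tilde g(y) = \sum_\m r_\m y_\m \in Y$. Therefore $g(Y) \subseteq Y$, proving $Y$ is a trace module in $X$.

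I expect the main obstacle to be the direction $(3) \Rightarrow (1)$: endomorphisms of $\rmE_R(R/\m)$ are given by the completion $\widehat{R_\m}$ rather than by $R$ itself, so one must carefully convert the completion-valued action into a genuine $R$-action on each socle-finite element, and then invoke the Chinese Remainder Theorem to disentangle the contributions of distinct maximal ideals and keep each component $y_\m$ of an arbitrary $y \in Y$ inside $Ry$. The remainder, in particular $(2) \Rightarrow (3)$, is a careful unwinding of the cyclic trace condition combined with standard Matlis--Bass theory.
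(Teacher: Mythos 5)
Your proposal is correct, but it is organized quite differently from the paper's proof, which proceeds by localization: the paper first proves a standalone local theorem (over a Noetherian local ring, every submodule of $X$ is a trace module in $X$ if and only if $X$ embeds into $\rmE_R(R/\m)$), using that $\Hom_R(\rmE,\rmE)=\widehat{R}$ and that every $R$-submodule of $\rmE$ is an $\widehat{R}$-submodule, and then globalizes both implications through a local--global lemma for the (finitely presented, hence cyclic) submodules together with the reduction to cyclic submodules. You instead argue globally throughout: for $(2)\Rightarrow(3)$ you exploit the cyclic criterion $Rx=(0:_X\ann(x))$ to get $\Ass_RX\subseteq\Max R$ (the ``$s$ is a unit mod $\fkp$'' trick, which replaces the paper's depth-zero and Krull-intersection argument for finitely generated submodules) and the bound $\dim_{R/\m}(0:_X\m)\le 1$, then invoke the Matlis--Bass decomposition of $\rmE_R(X)$; for $(3)\Rightarrow(1)$ you extend $Y\to X\hookrightarrow\rmE$ to an endomorphism of $\rmE=\bigoplus_\m\rmE_R(R/\m)$, kill the cross terms by the comaximality argument, let each $\widehat{R_\m}$-component act through $R/\m^n$ on $\m$-power-torsion elements, and use CRT to see that each component $y_\m$ of $y\in Y$ lies in $Ry\subseteq Y$ — in effect re-deriving globally what the paper obtains locally from ``submodules of $\rmE$ are $\widehat{R}$-submodules.'' Both routes are sound; the paper's buys the local theorem as a result of independent interest and keeps the global bookkeeping inside a standard localization lemma, while yours is shorter on preliminary lemmas but leans directly on heavier (though standard) Matlis-theoretic inputs: the decomposition $\rmE_R(X)\cong\bigoplus_{\fkp\in\Ass X}\rmE_R(R/\fkp)^{\mu_0(\fkp,X)}$ for arbitrary (not necessarily finitely generated) $X$ — valid because the $\fkp$-socle of $\rmE_R(X)_\fkp$ equals that of $X_\fkp$ by essentiality, a point worth a sentence — the identification $\End_R(\rmE_R(R/\m))\cong\widehat{R_\m}$, and the vanishing $\Hom_R(\rmE_R(R/\m),\rmE_R(R/\m'))=0$ for $\m\ne\m'$.
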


However, the main activity in the present paper is focused on the study of the structure of the set of regular trace ideals in $R$. Let $I$ be an ideal of a commutative ring $R$ and suppose that $I$ is {\em regular}, that is $I$ contains a non-zerodivisor of $R$. Then, as is essentially shown by \cite[Lemma 2.3]{L2}, $I$ is a trace ideal in $R$ if and only if $R:I= I:I$, where the colon is considered inside of the total ring $\rmQ(R)$ of fractions of $R$. We denote by $\calX_R$ the set of regular trace ideals in $R$, and explore the structure of $\calX_R$ in connection with the structure of $\calY_R$, where $\calY_R$ denotes the set of birational extensions $A$ of $R$ such that $aA \subseteq R$ for some non-zerodivisor $a$ of $R$. We also consider the set $\calZ_R$ of regular ideals $I$ of $R$ such that $I^2=aI$ for some $a \in I$. We then have the following natural correspondences 
$$\xi : \calZ_R \to \calY_R, \ \ \xi(I)= I:I,$$
$$\eta : \calY_R \to \calX_R, \  \ \eta(A)= R:A,$$
$$\rho : \calX_R \to \calY_R,\ \  \rho(I)= I:I$$
among these sets. The basic framework is the following.

\begin{prop}[Proposition \ref{3.9}, Lemma \ref{3.6} (1)]
 The correspondence $\xi : \calZ_R \to \calY_R$ is surjective, and the following conditions are equivalent.
\begin{enumerate}[{\rm (1)}]
\item $\rho : \calX_R \to \calY_R$ is surjective.
\item $\eta : \calY_R \to \calX_R$ is injective.
\item $A=R:(R:A)$ for every $A \in \calY_R$.
\end{enumerate}
\end{prop}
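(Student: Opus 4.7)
The plan is to dispatch the surjectivity of $\xi$ by a direct construction and then close the loop of equivalences using two elementary tools throughout: the characterization $I \in \calX_R \iff R:I = I:I$, and the triple-dual identity $R:(R:(R:J)) = R:J$ for every regular fractional ideal $J$ (which follows by applying the tautology $J \subseteq R:(R:J)$ to both $J$ and $R:J$).

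For the surjectivity of $\xi$, given $A \in \calY_R$ I would choose a non-zerodivisor $a \in R$ with $aA \subseteq R$ and set $I = aA$. Since $R \subseteq A$ we have $a \in I$, so $I$ is a regular ideal of $R$; since $A$ is a subring of $\rmQ(R)$ we have $A^{2} = A$, whence $I^{2} = a^{2}A = aI$ and $I \in \calZ_R$. Because $a$ is a unit in $\rmQ(R)$, the condition $x\cdot aA \subseteq aA$ is equivalent to $xA \subseteq A$, so $\xi(I) = I:I = A:A = A$.

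The equivalences would then be organised as the circles $(3) \Rightarrow (2) \Rightarrow (3)$ and $(3) \Rightarrow (1) \Rightarrow (3)$. For $(3) \Rightarrow (2)$, apply $R:(-)$ to $R:A_{1} = R:A_{2}$ and use (3) to recover $A_{1} = A_{2}$. For $(3) \Rightarrow (1)$, set $I = R:A \in \calX_R$, so $\rho(I) = I:I = R:I = R:(R:A) = A$. For $(1) \Rightarrow (3)$, given $A \in \calY_R$ pick $I \in \calX_R$ with $I:I = A$; since $I$ is a trace ideal, $R:I = A$, and the triple-dual identity yields $R:(R:A) = R:(R:(R:I)) = R:I = A$. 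For $(2) \Rightarrow (3)$, put $A' = R:(R:A)$; because $R:A \in \calX_R$ is a trace ideal one has $R:(R:A) = (R:A):(R:A)$, which is a subring of $\rmQ(R)$ containing $R$, and $bA' \subseteq R$ for any non-zerodivisor $b \in R:A$, so $A' \in \calY_R$. The triple-dual identity then gives $\eta(A') = R:(R:(R:A)) = R:A = \eta(A)$, and injectivity of $\eta$ forces $A = A'$.

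The only mildly delicate step is the identification $R:(R:A) = (R:A):(R:A)$ used in $(2) \Rightarrow (3)$, which is precisely the assertion that $R:A$ is a trace ideal, i.e., the well-definedness of $\eta$ recorded in the accompanying Lemma~\ref{3.6}(1). Once this is granted, every remaining manipulation is bookkeeping with colons and the triple-dual identity.
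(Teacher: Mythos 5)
Your proposal is correct and follows essentially the same route as the paper: the same construction $I=aA$ for the surjectivity of $\xi$, and the same three ingredients for the equivalences, namely the triple-dual identity $R:(R:(R:J))=R:J$ (the paper's Proposition \ref{3.0}(1)), the characterization $I\in\calX_R \Leftrightarrow R:I=I:I$, and the fact that $R:A$ is a trace ideal, which the paper packages into Lemma \ref{3.6} via the computations $\eta(\rho(I))=R:(R:I)$ and $\rho(\eta(A))=R:(R:A)$. You merely unpack those colon computations in-line rather than citing the lemma, so there is no substantive difference.
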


Our strategy is to make use of these correspondences in order to analyze the structure of commutative rings $R$ which are not necessarily Noetherian (see, e.g., \cite{GI}). This approach is partially inspired by and originated in \cite{GIW}, where certain specific ideals (called {\em good ideals})  in Gorenstein local rings are closely studied. Similarly, as in \cite{GIW} and as is shown later in Sections 2 and 3, the above correspondences behave very well, especially in the case where $R$ is a Gorenstein ring of dimension one. We actually have $\eta \circ \rho = 1_{\calX_R}$ and $\rho \circ \eta = 1_{\calY_R}$ in the case (Lemma \ref{3.6}). Nevertheless, being different from \cite{GIW}, our present interest is in the question of when the correspondence $\rho : \calX_R \to \calY_R$ is bijective. As is shown in Section 2 (Example \ref{3.7}), in general there is no hope for the surjectivity of $\rho$ in the case where $\dim R \ge 2$, even if $R$ is a Noetherian integral domain of dimension two. On the other hand, with very specific, so to speak extremal exceptions (Proposition \ref{2.7.5}), the surjectivity of $\rho$ guarantees the Gorenstein property of $R$, provided $R$ is a Cohen-Macaulay local ring of dimension one. In fact, we will prove in Section 5 the following, in which let us refer to \cite{GMP} for the notion of almost Gorenstein local ring.

\begin{thm}[Theorem \ref{1.2}]
Let $(R, \fkm)$ be a Cohen-Macaulay local ring of dimension one. Let $B=\fkm:\fkm$ and let $\rmJ(B)$ denote the Jacobson radical of $B$. Then the following assertions are equivalent.
\begin{enumerate}[{\rm (1)}]
\item $\rho : \calX_R \to \calY_R$ is  bijective.
\item $\rho : \calX_R \to \calY_R$  is surjective.
\item Either $R$ is a Gorenstein ring, or $R$ satisfies the following two conditions.
\begin{enumerate}[{\rm (i)}]
\item $B$ is a $\operatorname{DVR}$ and $\rmJ(B)=\fkm$.
\item There is no proper intermediate field between $R/\fkm$ and $B/\rmJ(B)$.
\end{enumerate} 
\end{enumerate}
When this is the case, $R$ is an almost Gorenstein local ring in the sense of \cite{GMP}.
\end{thm}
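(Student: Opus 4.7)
The plan is to prove the cycle $(1) \Rightarrow (2) \Rightarrow (3) \Rightarrow (1)$, with the almost Gorenstein conclusion as a consequence of (i) and (ii). The implication $(1) \Rightarrow (2)$ is immediate. For $(3) \Rightarrow (1)$, the Gorenstein case is Lemma \ref{3.6}. Under (i) and (ii), I would enumerate $\calY_R$ and $\calX_R$ directly: since $B$ is a DVR with $\rmJ(B) = \fkm$, the integral closure $\bar R$ equals $B$, so every $A \in \calY_R$ sits in $R \subseteq A \subseteq B$; by (ii), the intermediate $A$'s correspond to the two subalgebras $R/\fkm$ and $B/\fkm$ of the field $B/\fkm$, so $\calY_R = \{R, B\}$. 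Similarly $\calX_R$ reduces to $\{R, \fkm\}$ by showing that any regular trace ideal $I \subseteq \fkm$ is a $B$-module and hence a power of $\rmJ(B)$, then ruling out $\fkm^n$ for $n \ge 2$ via a direct colon computation. One checks $\rho(R) = R$ and $\rho(\fkm) = B$, giving the bijection.

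The main content is $(2) \Rightarrow (3)$. Assume $\rho$ is surjective and $R$ is not Gorenstein; I deduce (i) and (ii). By Proposition \ref{3.9}, the surjectivity of $\rho$ is equivalent to the biduality $A = R:(R:A)$ for every $A \in \calY_R$. The key reduction is that for any $A \in \calY_R$ one has $R:A \subseteq R$ (since $1 \in A$), so $R:A$ is an ideal of $R$; moreover for $R \subseteq A \subseteq B$ one has $R:A \supseteq R:B$. Since $R$ is not Gorenstein it is in particular not a DVR, so $B \supsetneq R$, and a short argument shows $R:B = \fkm$: if $x \in R:B$ were a unit of $R$, then for any $b \in B \setminus R$ one would have $b = x^{-1}(xb) \in R$, a contradiction. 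Hence $R:A$ is an ideal of $R$ containing $\fkm$, so $R:A \in \{R, \fkm\}$ and $A \in \{R, B\}$. I would then exclude the case $A \supsetneq B$ by a length argument: such an $A$ would force $R:A \subsetneq \fkm$, and the biduality $A = R:(R:A)$ combined with the non-Gorenstein hypothesis would produce new regular trace ideals beyond $R$ and $\fkm$, contradicting the constraints imposed on $\calX_R$ via the surjective image $\calY_R$. This gives $\calY_R = \{R, B\}$ and in particular $\bar R = B$.

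The remaining structural claims follow cleanly. Since $\calY_R$ has no element strictly between $R$ and $B$, $B$ must be local — a semilocal $B$ would localize at a proper subset of its maximals to produce an intermediate element of $\calY_R$; being the integral closure of a one-dimensional CM ring, $B$ is a DVR, giving the first half of (i). The equality $\rmJ(B) = \fkm$ follows from the conductor identity $R:B = \fkm$ just established. For (ii), a strict intermediate subfield $R/\fkm \subsetneq k' \subsetneq B/\fkm$ would lift, as preimage in $B$, to an intermediate ring strictly between $R$ and $B$, contradicting $\calY_R = \{R, B\}$. The main obstacle is the length argument excluding $A \supsetneq B$ in $(2)\Rightarrow(3)$; I expect this to hinge on the observation that biduality imposes a perfect pairing between $\calX_R$ and $\calY_R$, and that non-Gorensteinness together with the computed form of $R:A$ forbids any such $A$. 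Finally, once (i) and (ii) are in place, $B$ realizes a canonical module of $R$ up to shift and $B/R$ is a simple $(R/\fkm)$-module, which is precisely the Ulrich-quotient condition defining almost Gorenstein local rings in \cite{GMP}.
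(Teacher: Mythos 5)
Your implications $(1)\Rightarrow(2)$ and $(3)\Rightarrow(1)$ are essentially sound and run parallel to the paper (the enumeration $\calY_R=\{R,B\}$, $\calX_R=\{\fkm,R\}$ under (i) and (ii) is the argument of Proposition \ref{2.7.5} together with Proposition \ref{3a}). The genuine gap is in $(2)\Rightarrow(3)$, and it sits exactly at the hard point. From surjectivity you correctly get $R:B=\fkm$ and that there is no $A\in\calY_R$ with $R\subsetneq A\subsetneq B$ (this is Proposition \ref{2.7.6}), but the step you defer --- excluding $A\in\calY_R$ not contained in $B$, i.e.\ proving $B=\overline{R}$, and then proving $B$ is local --- is precisely the content of Theorem \ref{2.8}, and your sketch does not supply it. The claim that such an $A$ would ``produce new regular trace ideals beyond $R$ and $\fkm$, contradicting the constraints imposed on $\calX_R$'' is circular: the equality $\calX_R=\{\fkm,R\}$ is a \emph{conclusion} of the theorem, available only after the DVR structure of $B$ is known; at this stage nothing forbids $R:A$ from being a third trace ideal. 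Likewise your argument for locality of $B$ is not valid: localizing $B$ at a subset of its maximal ideals gives rings that are neither intermediate between $R$ and $B$ nor elements of $\calY_R$ (they need not even embed in $\rmQ(R)$ compatibly), and ``no ring strictly between $R$ and $B$'' alone does not force $B$ local --- the node $k[[X,Y]]/(XY)$ has $\ell_R(\overline{R}/R)=1$ with $\overline{R}$ semilocal. The paper's proof of these two points uses ingredients absent from your proposal: passage to the $\fkm$-adic completion so that $B$ splits into a product of local rings, the count $\mu_R(B)=1+\rmr(R)\ge 3$ from non-Gorensteinness to rule out idempotents, and the computation $B=R[x]=R[x^2]$ for $x\in B\setminus R$, which yields $x\notin\rmJ(B)$, hence $\rmJ(B)=\fkm$ and, via Lemma \ref{2.7.4} applied to the localizations $B_M=MB_M:MB_M$, that $B=\overline{R}$ is a DVR (this also disposes of possible non-reducedness of $R$, which your appeal to ``the integral closure of a one-dimensional CM ring is a DVR once local'' silently assumes away).

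Two smaller points. First, even granting $\calY_R=\{R,B\}$, your route still needs a reason why $B$, once local and equal to $\overline{R}$, is a domain; for non-reduced $R$ the integral closure inside $\rmQ(R)$ contains nilpotents and is never a DVR, so this must be argued, not asserted. Second, the closing justification of the almost Gorenstein property is incorrect as stated: $B/R$ has length $\rmr(R)\ge 2$ in the non-Gorenstein case (Lemma \ref{2.7.4}), so it is not a simple $R/\fkm$-module, and $B$ is not a canonical module of $R$; the correct reference is \cite[Theorem 5.1]{GMP}, which says that a one-dimensional Cohen--Macaulay local ring with $\fkm:\fkm$ a DVR is almost Gorenstein.
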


\noindent
Therefore, $\rho$ is surjective if and only if $R$ is a Gorenstein ring, provided $R$ is the semigroup ring of a numerical semigroup over a field.


In Section 6, we introduce the notion of anti-stable and strongly anti-stable rings. We say that a commutative ring $R$ is {\em anti-stable} (resp. {\em strongly anti-stable}), if $\Hom_R(I,R)$ is an invertible module over the ring $\End_RI$ (resp. $\Hom_R(I,R) \cong \End_RI$ as an $\End_RI$-module), for every {\em regular} ideal $I$ of $R$. The purpose of Section 6 is to provide some basic properties of anti-stable rings and strongly anti-stable rings, mainly in dimension one.

Here, let us remind the reader that $R$ is said to be a {\em stable} ring, if every ideal $I$ of $R$ is {\em stable}, that is $I$ is projective over $\End_RI$ (\cite{SV}). The notion of stable ideals and rings is originated in the famous articles \cite{B} and \cite{Li} of H. Bass and J. Lipman, respectively, and there are known many deep results about them (\cite{SV}). Our definition of anti-stable rings is, of course, different from that of stable rings. It requires the projectivity of the dual module $\Hom_R(I,R)$ of $I$, only for regular ideals $I$ of $R$, claiming nothing about the projectivity of $I$ itself. Nevertheless, with some additional conditions in dimension one, $R$ is also a stable ring, once it is anti-stable, as we shall show in the following.

\begin{thm}[Theorem \ref{final}]
Let $R$ be a Cohen-Macaulay ring with $\dim R_M = 1$ for every $M \in \Max R$. If $R$ is an anti-stable ring, then $R$ is a stable ring. 
\end{thm}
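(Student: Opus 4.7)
The plan is to localize and then show every regular ideal is stable by combining anti-stability with Theorem~\ref{1.2}. Since both anti-stability and stability can be verified at each maximal ideal (using that regular ideals, endomorphism rings, and duals all commute with localization for finitely generated modules), I may assume $(R, \fkm)$ is a $1$-dimensional Cohen--Macaulay local Noetherian anti-stable ring. The aim is to show every regular ideal $I$ of $R$ satisfies $I^2 = aI$ for some $a \in I$, equivalently $I$ is principal over $B := \End_R I = I:I$; this is the standard stability criterion in the $1$-dimensional CM local setting.

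The first step is to show that anti-stability forces the correspondence $\rho : \calX_R \to \calY_R$ to be surjective. Given $A \in \calY_R$, choose a non-zerodivisor $a \in R$ with $aA \subseteq R$. Then $aA$ is a regular ideal with $\End_R(aA) = A$ and $\Hom_R(aA, R) = a^{-1}(R:A)$, so anti-stability applied to $aA$ yields that $R:A$ is invertible over $A$. Since $A$ is a finite $R$-algebra, it is Noetherian semi-local with trivial Picard group, so $R:A = A\zeta$ is principal for some $\zeta \in \rmQ(R)^{\times}$. A direct computation gives $R:(R:A) = \zeta^{-1}(A\zeta) = A$, verifying condition~(3) of Proposition~\ref{3.9} and hence the surjectivity of $\rho$. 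By Theorem~\ref{1.2}, surjectivity is equivalent to bijectivity, so $\rho$ is bijective; in particular, $R:(\fkd:\fkd) = \fkd$ for every $\fkd \in \calX_R$.

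Now fix any regular ideal $I$ and set $B = I:I$, $J = R:I$. Anti-stability gives $J = B\eta$ principal over $B$ for some $\eta \in \rmQ(R)^{\times}$, so the trace ideal $\fkd := J \cdot I = \eta I$ belongs to $\calX_R$ with $\fkd:\fkd = B$. Applying anti-stability to a scaling $aB \subseteq R$ of $B$ similarly yields $R:B = B\zeta$ principal over $B$ for some $\zeta$. Combining with the identification $\fkd = R:(\fkd:\fkd) = R:B$ provided by the bijectivity of $\rho$, we obtain $\fkd = B\zeta$, and therefore $I = \eta^{-1}\fkd = (\eta^{-1}\zeta) B$ is principal over $B$. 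Setting $\xi := \eta^{-1}\zeta$, we have $\xi = \xi \cdot 1 \in \xi B = I$, and $I^2 = \xi^2 B = \xi I$, proving stability of $I$. The main obstacle is precisely the identification $\fkd = R:B$: without Theorem~\ref{1.2} upgrading $\rho$-surjectivity to $\rho$-bijectivity, only the inclusion $\fkd \subseteq R:B$ holds in general and the argument collapses, so the interplay between the anti-stability hypothesis (which provides both the surjectivity of $\rho$ and the principality of each relevant dual over its endomorphism ring) and the Gorenstein-or-extremal dichotomy of Theorem~\ref{1.2} (which supplies the missing duality $R:(\fkd:\fkd) = \fkd$) is the crux of the proof.
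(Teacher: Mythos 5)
Your argument for \emph{regular} ideals is essentially correct, and it takes a genuinely different route from the paper. You re-derive the surjectivity of $\rho$ from anti-stability exactly as in Lemma \ref{5.1} (using that anti-stable $=$ strongly anti-stable in the semi-local case), then invoke Theorem \ref{1.2} to upgrade surjectivity to bijectivity, which via Lemma \ref{3.6}/Proposition \ref{3.9} gives $R:(\fkd:\fkd)=\fkd$ for every $\fkd\in\calX_R$; combining this with the principality of $R:I$ over $I:I$ and of $R:B$ over $B$ does yield $I=\xi B$ and $I^2=\xi I$, so every $I\in\calF_R$ is stable. The paper instead proves that a one-dimensional Cohen--Macaulay local anti-stable ring has $\rme(R)\le 2$ (Theorem \ref{5.6}, through the dichotomy of Theorem \ref{2.8}), localizes this via Corollary \ref{5.3}, and then quotes Sally--Vasconcelos (Proposition \ref{5.2a}). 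Your route buys a direct structural statement about regular ideals without computing multiplicities; note, though, that your reduction to the local case silently uses the Cohen--Macaulay hypothesis to lift a regular ideal of $R_M$ to a regular ideal of $R$ (this is exactly the contraction argument in the proof of Corollary \ref{5.3}), so that step should be spelled out.

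The genuine gap is at the very end: in this paper a \emph{stable ring} is one in which \emph{every} ideal is projective over its endomorphism ring, whereas your argument only treats $I\in\calF_R$. The equivalence you call ``the standard stability criterion'' (stable $\Leftrightarrow$ $I^2=aI$ $\Leftrightarrow$ $I$ principal over $I:I$) is stated, and true, only for regular ideals; when $R_M$ is not a domain there are nonzero ideals consisting entirely of zerodivisors, and nothing in your proof addresses them, nor is ``all regular ideals stable $\Rightarrow$ all ideals stable'' something one may simply assert. The paper closes this by reducing to $\rme(R_M)\le 2$ and citing Proposition \ref{5.2a}, which covers all ideals. Your proof can be completed along the same lines: your conclusion shows $\fkm R_M$ is stable, hence $R_M$ has maximal embedding dimension; in the non-Gorenstein branch of Theorem \ref{1.2}\,(3) one has $\calX_{R_M}=\{\fkm R_M,R_M\}$ (Proposition \ref{3a}), which contradicts strong anti-stability once $\mu(\fkm R_M)\ge 3$ (take the ideal generated by all but one minimal generator, as in the proof of Theorem \ref{5.6}); so $R_M$ is Gorenstein of maximal embedding dimension, $\rme(R_M)\le 2$, and Proposition \ref{5.2a} finishes. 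As written, however, the passage from ``every regular ideal is stable'' to ``$R$ is a stable ring'' is missing.
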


The results of Section 6 are obtained as applications of the observations developed in Sections 2, 3, and 5. One can also find, in the forthcoming paper \cite{GI}, further developments of the theory of anti-stable rings of higher dimension.

Similarly as \cite{KT2}, our research is motivated by the works \cite{L, L2, LP} of Lindo and Pande, so that the topics of Section 6 are similar to those of \cite{KT2}, but these two researches were done with entire independence of each other. In \cite{LP}, Lindo and Pande posed a problem what kind of properties a Noetherian ring $R$ enjoys, if {\em every} ideal of $R$ is isomorphic to a trace ideal in it. In \cite{KT2}, T. Kobayashi and R. Takahashi have given  complete answers to the problem. We were also interested in the problem, and thereafter, came to the notion of anti-stable ring. If the ideal $I$ considered is {\em regular}, the condition (C) that $I$ is isomorphic to a trace ideal is equivalent to saying that $\Hom_R(I,R)\cong \End_RI$ as an $\End_RI$-module (Lemma \ref{5.0}). Therefore, if we restrict our attention, say  on integral domains $R$, the condition that every regular ideal satifies condition (C) is equivalent to saying that $R$ is a strongly anti-stable ring. However, in general, these two conditions are apparently different (e.g., consider the case where every non-zerodivisor of the ring is invertible in it, and see \cite[Theorem 3.2]{KT2}). It must be necessary, and might have some significance, to start a basic theory of anti-stable and strongly anti-stable rings in our context, with a different viewpoint from \cite{KT2}, which we have performed in Section 6.


In what follows, unless otherwise specified, $R$ denotes a commutative ring. Let $\rmQ(R)$ be the total ring of fractions of $R$. For $R$-submodules $X$ and $Y$ of $\rmQ(R)$, let $$X:Y = \{a \in \rmQ(R) \mid aY \subseteq X\}.$$
If we consider ideals $I,J$ of $R$, we set $I:_RJ=\{a \in R \mid aJ \subseteq I\}$; hence $$I:_RJ = (I:J) \cap R.$$ When $(R, \m)$ is a Noetherian local ring of dimension $d$, for each finitely generated $R$-module $M$, let $\mu_R(M)$ (resp. $\ell_R(M)$) denote the number of elements in a minimal system of generators (resp. the length) of $M$. We denote by $$\rme(M) = \lim_{n \to \infty}d!{\cdot}\frac{\ell_R(M/\m^{n+1}M)}{n^d}$$ the multiplicity of $M$. Let $\rmr(R) = \ell_R(\Ext_R^d(R/\m,R))$ stand for the Cohen-Macaulay type of $R$, where we assume the local ring $R$ is Cohen-Macaulay.


\section{Correspondence between trace ideals and birational extensions of the base ring}
Let $R$ be a commutative ring and let $M, X$ be $R$-modules. We denote by $\tau_{M,X}: \Hom_R(M,X) \otimes_RM \to X$ the $R$-linear map such that $\tau_{M,X}(f \otimes m)=f(m)$ for all $f \in \Hom_R(M,X)$ and $m \in M$. Let $\tau_X(M) = \operatorname{Im} \tau_{M,X}$. Then, $\tau_X(M)$ is an $R$-submodule of $X$, and we say that an $R$-submodule $Y$ of $X$ is a trace module in $X$, if $Y =\tau_X(M)$ for some $R$-module $M$. When $X = R$, we simply say that $Y$ is a trace ideal in $R$. With this notation we have the following.

\begin{prop}[{\cite[Lemma 2.3]{L2}}]\label{2.3}
For an  $R$-submodule $Y$ of $X$, the following conditions are equivalent.
\begin{enumerate}[{\rm (1)}]
\item $Y$ is a trace module in $X$.
\item $Y=\tau_X (Y)$. 
\item The embedding $\iota: Y \to X$ induces the isomorphism $\iota_* : \Hom_R(Y, Y) \to \Hom_R(Y, X)$ of $R$-modules. 
\item $f(Y) \subseteq Y$ for all $f \in \Hom_R(Y, X)$.
\end{enumerate}
\end{prop}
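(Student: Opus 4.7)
The plan is to establish the equivalence by the cycle $(2) \Rightarrow (1) \Rightarrow (4) \Rightarrow (3) \Rightarrow (2)$. Two of these implications are essentially formal: $(2) \Rightarrow (1)$ holds with the choice $M = Y$, and for $(3) \Rightarrow (2)$ one observes that $\iota_*$ is always injective (post-composition with an embedding is injective on Hom-sets), so its being an isomorphism just says every $f \in \Hom_R(Y, X)$ factors as $\iota \circ g$ with $g \in \Hom_R(Y, Y)$, forcing $f(Y) \subseteq Y$, which is condition $(4)$; and $(4)$ in turn gives $Y = \tau_X(Y)$ on combining with the tautological inclusion $Y \subseteq \tau_X(Y)$ coming from the fact that $\iota \in \Hom_R(Y, X)$ itself contributes every element of $Y$ to the image.

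For $(4) \Rightarrow (3)$ the same remark runs backwards: condition $(4)$ allows one to corestrict any $f \in \Hom_R(Y, X)$ to a map $Y \to Y$, producing the required preimage under $\iota_*$, so surjectivity holds, and injectivity is automatic.

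The main substantive step is $(1) \Rightarrow (4)$. Assume $Y = \tau_X(M)$, fix $f \in \Hom_R(Y, X)$ and $y \in Y$, and write $y = \sum_i g_i(m_i)$ with $g_i \in \Hom_R(M, X)$ and $m_i \in M$. I would then invoke the simple but essential observation that the \emph{entire} image $g_i(M)$ lies in $\tau_X(M) = Y$, since every single element $g_i(m) = \tau_{M, X}(g_i \otimes m)$ sits in $\operatorname{Im} \tau_{M, X}$. Consequently $f \circ g_i$ is well-defined as an element of $\Hom_R(M, X)$, and then
$$f(y) = \sum_i (f \circ g_i)(m_i) \in \tau_X(M) = Y,$$
so $f(Y) \subseteq Y$.

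The only point requiring any care, and the one I would flag as the crux, is precisely that observation: $g_i$ sends all of $M$ into $Y$, not merely the chosen generator $m_i$. Once this is noted, the rest of the argument is pure bookkeeping about compositions with the embedding $\iota : Y \hookrightarrow X$, and the four conditions fall into line without further input.
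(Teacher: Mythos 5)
Your proof is correct: the cycle $(2)\Rightarrow(1)\Rightarrow(4)\Rightarrow(3)\Rightarrow(2)$ closes, and the one substantive step, $(1)\Rightarrow(4)$, is handled properly by the observation that each $g_i(M)\subseteq\tau_X(M)=Y$, so that $f\circ g_i\in\Hom_R(M,X)$ and $f(y)\in\tau_X(M)$. The paper itself gives no proof, citing Lindo's Lemma 2.3 instead, and your argument is exactly the standard one behind that reference, so nothing further is needed.
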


We denote by $W$ the set of non-zerodivisors of $R$. Let $\calF_R$ be the set of {\em regular} ideals of $R$, that is the ideals $I$ of $R$ with $I \cap W \ne \emptyset$. We then have the following, characterizing trace ideals.

\begin{cor}\label{2.5}
Let $I \in \calF_R$. Then the following conditions are equivalent.
\begin{enumerate}[{\rm (1)}]
\item $I$ is a trace ideal in $R$.
\item $I=(R:I)I$. 
\item $I:I=R:I$. 
\end{enumerate}
\end{cor}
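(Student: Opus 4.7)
The plan is to pass everything through $Q(R)$ and reduce to Proposition \ref{2.3}. The key identification, valid because $I$ is regular, is as follows: fixing a non-zerodivisor $a \in I$, any $R$-linear map $f : I \to R$ is determined by the element $f(a)/a \in Q(R)$, which gives canonical isomorphisms
\[
\Hom_R(I, R) \;\cong\; R : I, \qquad \Hom_R(I, I) \;\cong\; I : I,
\]
both realized as $R$-submodules of $Q(R)$, under which the map $\iota_* : \Hom_R(I,I) \to \Hom_R(I,R)$ induced by the inclusion $\iota : I \hookrightarrow R$ becomes simply the inclusion $I : I \hookrightarrow R : I$ inside $Q(R)$.

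Granting this, I would first establish (1) $\Leftrightarrow$ (2). By definition, $\tau_R(I)$ is the $R$-submodule of $R$ generated by all elements $f(m)$ with $f \in \Hom_R(I,R)$ and $m \in I$. Via the identification above, $f$ corresponds to multiplication by some $x \in R : I$, so $f(m) = xm$, and therefore $\tau_R(I) = (R : I)\,I$. Proposition \ref{2.3} then says that $I$ is a trace ideal if and only if $I = \tau_R(I) = (R : I)\,I$.

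Next, for (1) $\Leftrightarrow$ (3), I would invoke condition (3) of Proposition \ref{2.3}: $I$ is a trace ideal iff $\iota_* : \Hom_R(I,I) \to \Hom_R(I,R)$ is an isomorphism. Under the identification above, this map is the inclusion $I:I \hookrightarrow R:I$, and an inclusion of submodules of $Q(R)$ is an isomorphism iff it is an equality. This gives (1) $\Leftrightarrow$ (3).

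Finally, (2) $\Leftrightarrow$ (3) can be checked directly in $Q(R)$ as a sanity consistency: since $1 \in R : I$ one always has $I \subseteq (R:I)\,I$, so $I = (R:I)\,I$ is equivalent to $(R:I)\,I \subseteq I$, i.e.\ to $R : I \subseteq I : I$; and the reverse inclusion $I : I \subseteq R : I$ always holds because $I \subseteq R$. There is no real obstacle here beyond making the $\Hom$-to-colon identification explicit; once that is in place, the three equivalences fall out immediately from Proposition \ref{2.3}.
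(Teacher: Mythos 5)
Your proof is correct and follows essentially the same route as the paper: identify $\Hom_R(I,R)$ with $R:I$ and $\Hom_R(I,I)$ with $I:I$ (valid since $I$ is regular), apply Proposition \ref{2.3}, and settle (2) $\Leftrightarrow$ (3) by the direct colon computation using $1 \in R:I$ and $I \subseteq R$. The only difference is cosmetic -- you spell out the identification via $f \mapsto f(a)/a$ and also note $\tau_R(I) = (R:I)I$ explicitly -- which matches the paper's argument in substance.
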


\begin{proof}
Since $I \cap W \ne \emptyset$, we have natural identifications
$R:I = \Hom_R(I,R)$ and $I:I = \Hom_R(I,I)$, so that  the equivalence of conditions (1) and (3) follows from Proposition \ref{2.3}. Suppose that $I=(R:I)I$. Then $R:I \subseteq I:I$, whence $R:I = I:I$. Conversely, if $I : I = R:I$, then $(R:I)I = (I:I)I \subseteq I$, while $I \subseteq (R:I)I$, since $1 \in R:I$. Thus $(R:I)I= I$.
\end{proof}

We now  consider the following sets:
\begin{align*}
\calX_R &=\left\{ I \in \calF_R \mid \text{$I$ is a trace ideal in $R$} \right\},\\
\calY _R&=\left\{ A \mid \text{$R \subseteq A \subseteq \rmQ(R)$, $A$ is a subring of $\rmQ(R)$ such that $aA \subseteq R$ for some $a \in W$} \right\}, \\
\calZ_R &= \left\{ I  \in \calF_R \mid \text{$I^2=aI$ for some $a \in I$} \right\}.
\end{align*} 

\noindent
If $R$ is a Noetherian ring, then $\calY_R$ is the set of birational finite extensions of $R$. In what follows, we shall clarify the relationship among these sets. We begin with the following.

\begin{prop}\label{3.0} The following assertions hold true.
\begin{enumerate}[{\rm (1)}]
\item Let $X$ be an $R$-submodule of $\rmQ(R)$ and set $Y = R:X$. Then $Y= R: (R:Y)$. 
\item Let $I \in \calZ_R$ and assume that $I^2 = aI$ with $a \in I$. Then $a \in W$ and  $I:I = a^{-1}I$.
\end{enumerate}
\end{prop}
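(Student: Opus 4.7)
The plan is straightforward since both assertions are elementary colon--ideal manipulations. The only conceptual point is that in part~(2) the identity $I:I = a^{-1}I$ only makes sense once $a$ is known to be a non-zerodivisor, so the logical order is to prove $a\in W$ first and only then extract the description of $I:I$.

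For assertion (1), I would rely on two general observations about colons of $R$-submodules of $\rmQ(R)$: (a) for every such submodule $Z$, the inclusion $Z \subseteq R:(R:Z)$ holds trivially from the definition, since any $z \in Z$ multiplies $R:Z$ into $R$; and (b) the operation $Z \mapsto R:Z$ reverses inclusions. Applying (a) to $Z = Y$ gives $Y \subseteq R:(R:Y)$. For the reverse inclusion, the hypothesis $Y = R:X$ means $YX \subseteq R$, hence $X \subseteq R:Y$; applying (b) yields $R:(R:Y) \subseteq R:X = Y$. Combining the two inclusions gives the claim.

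For assertion (2), I would first settle $a \in W$ using the regularity of $I$. Pick $c \in I \cap W$. From $c^{2} \in I^{2} = aI$ I get $c^{2} = ay$ for some $y \in I$. If $ab = 0$ for some $b \in R$, then $c^{2}b = ayb = (ab)y = 0$, and since $c$ (hence $c^{2}$) is a non-zerodivisor, $b = 0$. Thus $a \in W$, and $a^{-1}I$ is a well-defined $R$-submodule of $\rmQ(R)$. For the identity $I:I = a^{-1}I$, I would check both containments directly. The inclusion $a^{-1}I \subseteq I:I$ comes from $(a^{-1}I)\cdot I = a^{-1}\cdot I^{2} = a^{-1}\cdot aI = I$. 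Conversely, if $x \in I:I$, then $xI \subseteq I$, and in particular $xa \in I$ because $a \in I$; multiplying by $a^{-1}$ gives $x \in a^{-1}I$.

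No step presents a real obstacle; the arguments reduce to one or two lines each. The closest thing to a subtlety is the use of $c^{2}$ rather than $c$ itself when proving $a\in W$: it is the factorization $c^{2}=ay$ with $y$ inside $I$ that lets the assumption $ab=0$ be reshuffled into $c^{2}b=0$, so the regularity of $I$ can be leveraged without assuming anything beyond $a \in I$.
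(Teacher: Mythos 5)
Your proof is correct and follows essentially the same route as the paper: the same colon-reversal argument for (1) and the same two containments for (2). The only difference is that you spell out why $a \in W$ (via $c^2 = ay$ with $c \in I \cap W$), a detail the paper compresses into ``because $I \in \calF_R$.''
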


\begin{proof}
(1) Since $X \subseteq R:Y$, $Y = R:X \supseteq R:(R:Y)$, so that $Y= R:(R:Y)$.

(2) We have $a \in W$, because $I \in \calF_R$. Since $a \in I$, $I:I \subseteq a^{-1}I$, while $a^{-1}I \subseteq I : I$, because $a^{-1}I{\cdot}I = a^{-1}I^2 = a^{-1}(aI)=I$. Hence $I:I = a^{-1}I$.
\end{proof}

\begin{lemma}\label{3.2}
The following assertions hold true.
\begin{enumerate}[{\rm (1)}]
\item Let $I \in \calX_R$ and $a \in I \cap W$. We set $J = (a):_RI$. Then, $J \subseteq I$ and $J^2 = aJ$, so that $J \in \calZ_R$.
\item Let $I \in \calZ_R$ and write $I^2 = aI$ with $a \in I$. We set $J = (a):_RI$. Then, $I \subseteq J$ and $J \in \calX_R$.
\end{enumerate}
\end{lemma}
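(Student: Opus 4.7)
The plan is to unify both parts via the observation that $J = (a):_R I = a(R:I)$ whenever $a \in I \cap W$. Indeed, for $b \in J$ we have $a^{-1}b \cdot I \subseteq R$, so $a^{-1}b \in R:I$; conversely, given $b = ac$ with $c \in R:I$, the condition $a \in I$ forces $b = ca \in (R:I)\cdot I \subseteq R$ and $bI \subseteq aR$, whence $b \in (a):_R I$. This reformulation converts both claims into manipulations of fractional ideals inside $\rmQ(R)$.

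For part (1), since $I \in \calX_R$, Corollary \ref{2.5} gives $R:I = I:I$, so $J = a(I:I)$. The containment $J \subseteq I$ follows because for $x \in I:I$ we have $ax \in xI \subseteq I$, so $a(I:I) \subseteq I$. For the equality $J^2 = aJ$, I exploit the fact that $I:I$ is a subring of $\rmQ(R)$, hence $(I:I)(I:I) = I:I$; consequently
$$J^2 = a^2(I:I)(I:I) = a^2(I:I) = a\cdot a(I:I) = aJ.$$
Since $aI \subseteq (a)$ yields $a \in J$, one concludes $J \in \calZ_R$.

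For part (2), I would first invoke Proposition \ref{3.0}(2) to obtain $a \in W$ and $B := I:I = a^{-1}I$, a subring of $\rmQ(R)$ containing $R$, with $I = aB$. The inclusion $I \subseteq J$ is immediate from $xI \subseteq I^2 = aI \subseteq (a)$ for $x \in I$, and $a \in J \cap W$ ensures $J \in \calF_R$. The real substance is showing that $J$ is a trace ideal. Combining $J = a(R:I)$ with the identity $R:I = R:(aB) = a^{-1}(R:B)$ yields the crucial identification $J = R:B$; in other words, $J$ is the conductor of $R$ in $B$. Since $J$ is then a $B$-ideal and is characterized as the largest $B$-submodule of $\rmQ(R)$ contained in $R$, the trace criterion $R:J = J:J$ drops out: the inclusion $J:J \subseteq R:J$ is automatic from $J \subseteq R$, while conversely, for $x \in R:J$ the $B$-stability $BJ = J$ gives $B(xJ) = x(BJ) = xJ$, so $xJ$ is a $B$-submodule of $R$ and therefore $xJ \subseteq R:B = J$, i.e., $x \in J:J$. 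Corollary \ref{2.5} then concludes $J \in \calX_R$.

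The main obstacle is recognizing in part (2) that the colon ideal $J$ coincides with the conductor $R:B$; once this is in hand, the maximality of the conductor delivers the nontrivial inclusion $R:J \subseteq J:J$ in one line. A direct attack via the double dual $R:(R:I)$ would force one to contend with an identity $R:(R:I) = I$ that has no reason to hold at this level of generality, so routing through $B$ and its conductor is essential.
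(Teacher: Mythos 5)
Your proof is correct and follows essentially the same route as the paper: in both parts you identify $J=(a):_RI=a(R:I)$, reduce (1) to $J=a(I:I)$ with $I:I$ a ring (so $J\subseteq I$ and $J^2=aJ$), and reduce (2) to $J=R:B$ with $B=I:I=a^{-1}I$ via Proposition \ref{3.0} (2). The only cosmetic difference is the last step of (2): the paper computes $J:J=(R:B):(R:B)=R:B(R:B)=R:(R:B)=R:J$ by colon identities, whereas you get $R:J\subseteq J:J$ from the maximality of the conductor $R:B$ among $B$-submodules of $R$ --- an equivalent verification.
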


\begin{proof}
(1) We set $A = I:I$. Then, $A = R:I$  by Corollary \ref{2.5}. Hence, $J=(a):_RI = (a):I=a(R:I) = aA$, where the second equality follows from the fact that $a \in I \cap W$. Therefore, $J^2 = aJ$ and $J = a(I:I) \subseteq I$.

(2) Notice that $J = (a):I = a(R:I)$. Let $A =I:I$. Then, $I = aA$, since $A = a^{-1}I$ by Proposition \ref{3.0} (2), so that $R:I = R :aA = a^{-1}(R:A)$. Therefore, $J = R:A$, whence $$J:J= (R:A):(R:A) = R:A(R:A)= R:(R:A)=R:J.$$ Thus, $J \in \calX_R$. 
\end{proof}

Let $I \in \calF_R$. We say that $I$ is a {\em good ideal} of $R$, if $I^2=aI$ and $I=(a):_RI$ for some $a \in I$ (cf. \cite{GIW}). Let $\calG_R$ denote the set of  good ideals in $R$. We then have the following,  characterizing good ideals.

\begin{prop}\label{3.3}
$\calX_R \cap \calZ_R= \calG_R=\{I \in \calX_R  \mid (a):_RI \in \calX_R~\text{for ~some}~a \in I \cap W \}$.
\end{prop}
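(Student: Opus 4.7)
The plan is to prove the two equalities $\calX_R\cap\calZ_R=\calG_R$ and $\calG_R=\{I\in\calX_R\mid (a):_RI\in\calX_R\text{ for some }a\in I\cap W\}$ by chasing the definitions with the help of Corollary \ref{2.5}, Proposition \ref{3.0}, and Lemma \ref{3.2}.

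First I would show $\calG_R\subseteq\calX_R\cap\calZ_R$. The containment in $\calZ_R$ is immediate from the definition of a good ideal. For the containment in $\calX_R$, pick $a\in I$ with $I^{2}=aI$ and $I=(a):_{R}I$; then Lemma \ref{3.2}(2) applied to $I\in\calZ_R$ says that $(a):_{R}I\in\calX_R$, and this coincides with $I$. Next, for the reverse inclusion $\calX_R\cap\calZ_R\subseteq\calG_R$, let $I\in\calX_R\cap\calZ_R$ and write $I^{2}=aI$ with $a\in I$. Corollary \ref{2.5} gives $R:I=I:I$, while Proposition \ref{3.0}(2) gives $I:I=a^{-1}I$. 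Combining the two yields $a(R:I)=I$, i.e., $(a):I=I$ inside $\rmQ(R)$; since $I\subseteq R$, intersecting with $R$ gives $(a):_{R}I=I$. Hence $I$ is good.

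For the second equality, the inclusion $\calG_R\subseteq\{I\in\calX_R\mid (a):_RI\in\calX_R\text{ for some }a\in I\cap W\}$ is immediate from the first equality: if $I$ is good with witness $a$, then $I\in\calX_R$ and $(a):_{R}I=I\in\calX_R$. For the reverse, suppose $I\in\calX_R$ and that $J:=(a):_{R}I\in\calX_R$ for some $a\in I\cap W$. By Lemma \ref{3.2}(1), $J\subseteq I$ and $J^{2}=aJ$, so $J\in\calX_R\cap\calZ_R=\calG_R$ by what is already proved; in particular $J=(a):_{R}J$. The key step is now to show $I\subseteq J$: since $JI\subseteq(a)$, every $i\in I$ satisfies $iJ\subseteq(a)$, i.e., $i\in(a):_{R}J=J$. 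Thus $I=J$, and the goodness of $J$ transports to $I$, giving $I\in\calG_R$.

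Most steps are short manipulations of colon ideals; the only place one has to be a bit careful is the containment $I\subseteq J$ in the last paragraph, because the natural direction of the inequality coming from Lemma \ref{3.2}(1) goes the other way. The trick is to exploit the equality $J=(a):_{R}J$ — which we only acquire after first applying the already-proved identification $\calX_R\cap\calZ_R=\calG_R$ to $J$ — and to reinterpret $JI\subseteq(a)$ symmetrically. Everything else is bookkeeping with the formulas $R:I=I:I=a^{-1}I$ and $(a):_{R}I=a(R:I)$.
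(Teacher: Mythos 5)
Your proof is correct and follows essentially the same route as the paper's (Corollary \ref{2.5}, Proposition \ref{3.0}(2), Lemma \ref{3.2}, and the symmetric observation $I\subseteq (a):_R J$ since $IJ\subseteq (a)$); the paper just obtains $(a):_R J\subseteq J$ directly from Lemma \ref{3.2}(1) applied to $J$ and sandwiches $I\subseteq (a):_R J\subseteq J\subseteq I$, rather than taking your detour through the goodness of $J$. The only point worth spelling out is that $a\in J=(a):_R I$ (because $aI\subseteq (a)$), so the already-proved first equality applies to $J$ with this specific witness $a$ and really does give $J=(a):_R J$.
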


\begin{proof}
Let $I \in \calX_R \cap \calZ_R$ and set $A=I:I$. We write $I^2=aI$ with $a \in I$. Then, since  $I=aA$ and $A = R:I$ (see Proposition \ref{3.0} and Corollary \ref{2.5}), $(a):_RI= (a):I=a(R:I) = aA=I$,  so that $I$ is a good ideal of $R$. Conversely, suppose that $I$ is a good ideal of $R$ and assume that $I^2=aI$ and $I=(a):_RI$ with $a \in I$. Then $I \in \calZ_R$, while $(a):_RI \in \calX_R$ by Lemma \ref{3.2} (2). Hence $I \in \calX_R \cap \calZ_R$.

Assume that $I \in \calX_R$ and that $(a):_RI \in \calX_R$ for some $a \in I \cap W$. We set $J =(a):_RI$.  Then, $J^2 = a J$ and $J \subseteq I$, by Lemma \ref{3.2} (1). For the same reason, we get $(a):_R J \subseteq J$, because $J \in \calX_R$ and $a \in J$. Therefore, $I \subseteq (a):_RJ \subseteq J \subseteq I$; hence $I =J$. Thus, $I^2=aI$ and $I = (a):_RI$, that is $I \in \calG_R$.
\end{proof}

Let us consider three correspondences
$$\xi : \calZ_R \to \calY_R, \ \ \xi(I)= I:I,$$
$$\eta : \calY_R \to \calX_R, \  \ \eta(A)= R:A,$$
$$\rho : \calX_R \to \calY_R,\ \  \rho(I)= I:I.$$
Here, we briefly confirm the well-definedness of $\eta$. Let $A \in \calY_R$ and set $I = R:A$. Since $I$ is an ideal of $A$, we get $I:I= (R:A):I = R:AI= R:I.$ Therefore, $I \in \calX_R$, which shows $\eta$ is well-defined.

With this notation, we have the following, which plays a key role in this paper.

\begin{lemma}\label{3.6} The following assertions hold true.
\begin{enumerate}[{\rm (1)}]
\item The correspondence $\xi : \calZ_R \to \calY_R$ is surjective. For each $I,J \in \calZ_R$, $\xi(I) = \xi(J)$ if and only if $I \cong J$ as an $R$-module, so that $\calY_R = \calZ_R/\cong$, that is the set of the isomorphism classes in $\calZ_R$.
\item $\eta(\rho(I)) = R:(R:I)$ for every $I \in \calX_R$. 
\item $\rho(\eta(A)) = R:(R:A)$ for every $A \in \calY_R$.
\end{enumerate}
Consequently, $\rho(\calX_R) = \{A \in \calY_R \mid A = R:(R:A)\}$, $\eta(\calY_R) = \{I \in \calX_R \mid I = R:(R:I)\}$, and we have a bijective correspondence $\eta(\calY_R) \to \rho(\calX_R), \ I \mapsto I:I$.
\end{lemma}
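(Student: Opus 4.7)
The plan is to prove (1), (2), (3) in order and then derive the concluding statements by exploiting Proposition \ref{3.0}(1). For the surjectivity of $\xi$ in (1), given $A\in\calY_R$ I would pick a non-zerodivisor $a$ with $aA\subseteq R$ and set $I=aA$. Using that $A$ is a subring of $\rmQ(R)$ containing $R$, one checks $I^2=a^2A\cdot A=a^2A=aI$ with $a\in I$ (so $I\in\calZ_R$), and $\xi(I)=I:I=aA:aA=A:A=A$, where $A:A=A$ holds because $A$ is a subring containing $1$. For the isomorphism criterion, one direction is immediate from Proposition \ref{3.0}(2): if $\xi(I)=\xi(J)=A$, then $I=a(I:I)=aA$ and $J=b(J:J)=bA$ for the relevant $a,b\in W$, so $I\cong A\cong J$ as $R$-modules. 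Conversely, if $\phi:I\to J$ is an $R$-isomorphism of regular fractional ideals, then for any $c\in I\cap W$ and $x\in I$ the identity $c\phi(x)=\phi(cx)=x\phi(c)$ shows that $\phi$ is multiplication by the fixed element $u=\phi(c)/c\in\rmQ(R)$; regularity of $I$ then forces $u\in\rmQ(R)^\times$, whence $J=uI$ and $I:I=uI:uI=J:J$.

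For (2) and (3) the key input is Corollary \ref{2.5}, which gives $I:I=R:I$ for every $I\in\calX_R$. Thus $\eta(\rho(I))=R:(I:I)=R:(R:I)$, establishing (2). For (3), with $I=\eta(A)=R:A\in\calX_R$ I compute $\rho(\eta(A))=I:I=R:I=R:(R:A)$.

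For the concluding description, I would combine (2) and (3) with the biduality $R:(R:(R:X))=R:X$ from Proposition \ref{3.0}(1). This yields $\eta\circ\rho\circ\eta=\eta$ and $\rho\circ\eta\circ\rho=\rho$, so $\rho(\calX_R)$ and $\eta(\calY_R)$ are precisely the fixed-point sets of the closure operators $A\mapsto R:(R:A)$ and $I\mapsto R:(R:I)$ respectively, and the restrictions of $\rho$ and $\eta$ become mutually inverse bijections between them. The bijective correspondence then takes the stated form $I\mapsto I:I$, which coincides with $I\mapsto R:I$ by Corollary \ref{2.5}. I expect the only nontrivial step to be the verification in (1) that an $R$-module isomorphism between regular fractional ideals is multiplication by a unit of $\rmQ(R)$; the remainder is routine manipulation of colon operations.
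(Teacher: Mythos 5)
Your proposal is correct and follows essentially the same route as the paper: surjectivity of $\xi$ via $I=aA$, the isomorphism criterion via Proposition \ref{3.0}(2) and the fact that an $R$-isomorphism of regular fractional ideals is multiplication by a unit of $\rmQ(R)$ (which the paper asserts and you actually verify), the colon computations for (2) and (3) using Corollary \ref{2.5}, and the concluding fixed-point description via the biduality $R:(R:(R:X))=R:X$ from Proposition \ref{3.0}(1).
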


\begin{proof}
(1) Let $A \in \calY_R$ and choose $a \in W$ so that $a A \subseteq R$. We set $I = aA$. We then have $I^2 = aI$ and $I:I = aA : aA = A : A = A$, whence $I \in \calZ_R$, and $\xi$ is surjective, because $\xi(I)=A$. Let $I,J \in \calZ_R$ and choose $a \in I, b \in J$ so that $I^2 = aI$ and $J^2 = bJ$. Then, $I:I =a^{-1}I$ and $J:J=b^{-1}J$. Hence, if $\xi(I)= \xi(J)$, then $a^{-1}I =b^{-1}J$, so that $I \cong J$ as an $R$-module. Conversely, suppose that $I, J \in \calZ_R$ and $I \cong J$. Then $J = \alpha I$ for some invertible element $\alpha$ of $Q(R)$, whence $\xi(J) = J:J = \alpha I:\alpha I = I:I = \xi(I)$.

(2) (3) Notice that $\eta(\rho(I))= R:(I:I) = R:(R:I)$ for every $I \in \calX_R$ and $$\rho(\eta(A)) = (R:A) :(R:A)= R:A(R:A) = R: (R:A)$$
for every $A \in \calY_R$.

The last assertions follow from the fact that $R:(R:Y)=Y$ for every $R$-submodule $Y$ of $\rmQ(R)$, once $Y = R:X$ for some $R$-submodule $X$ of $\rmQ(R)$ (see Proposition \ref{3.0} (1)).
\end{proof}

\begin{cor}\label{3.10}
The correspondence $\rho$ induces a bijection 
$$\calG_R \to \{A \in \calY_R \mid aA = R:A~\text{for~some}~a \in W\}, \ \ I \mapsto I:I.$$
\end{cor}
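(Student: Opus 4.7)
The plan is to leverage the bijection $\eta(\calY_R) \to \rho(\calX_R)$, $I \mapsto I:I$, established at the end of Lemma~\ref{3.6}. Once I show $\calG_R \subseteq \eta(\calY_R)$ and identify $\rho(\calG_R)$ with the target set $\calS := \{A \in \calY_R \mid aA = R:A~\text{for~some}~a \in W\}$, both injectivity and surjectivity of the restricted map will follow.

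For the forward direction, I would take $I \in \calG_R$, writing $I^2 = aI$ and $I = (a):_R I$ for some $a \in I$. By Proposition~\ref{3.0}(2), $a$ is a non-zerodivisor and $A := I:I$ equals $a^{-1}I$, so $A \in \calY_R$ and $I = aA$. Since $I \in \calX_R$, Corollary~\ref{2.5} identifies $R:I$ with $A$, so
\[
R:A = R:(a^{-1}I) = a(R:I) = aA = I.
\]
This single identity shows simultaneously that $I = \eta(A) \in \eta(\calY_R)$ and that $\rho(I) = A$ satisfies $aA = R:A$ with $a \in W$, so $\rho(\calG_R) \subseteq \calS$. Injectivity of $\rho|_{\calG_R}$ is then inherited from the Lemma~\ref{3.6} bijection.

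For surjectivity, given $A \in \calS$ with $aA = R:A$ for some $a \in W$, I would take $I := R:A = aA$ as the candidate preimage. By definition $I = \eta(A) \in \calX_R$, and $a \in I$ since $1 \in A$. The routine computation
\[
I^2 = (aA)^2 = a^2 A = a \cdot aA = aI
\]
places $I$ in $\calZ_R$, whence $I \in \calX_R \cap \calZ_R = \calG_R$ by Proposition~\ref{3.3}. Finally $\rho(I) = aA : aA = A : A = A$, using that $A$ is a subring of $\rmQ(R)$.

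I do not expect a serious obstacle: the corollary is essentially a bookkeeping consequence of Proposition~\ref{3.3} and Lemma~\ref{3.6}. The only mildly delicate point is tracking which colons land in $R$ versus $\rmQ(R)$ and verifying that the element $a$ witnessing $I \in \calG_R$ is automatically a non-zerodivisor so that it can serve as the witness for $aA = R:A$; both are handled by Proposition~\ref{3.0}(2) and the fact that members of $\calF_R$ are regular.
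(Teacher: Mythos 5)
Your proof is correct and follows essentially the same route as the paper's: both directions rest on Proposition \ref{3.3}, Proposition \ref{3.0}, Corollary \ref{2.5}, and the bijection $\eta(\calY_R)\to\rho(\calX_R)$ from Lemma \ref{3.6}, and your surjectivity argument with $I=aA$ is verbatim the paper's. The only cosmetic difference is that you compute $R:A=R:(a^{-1}I)=a(R:I)=aA$ directly, whereas the paper first notes $I=R:(R:I)$ via Proposition \ref{3.0}(1) and then deduces $R:A=I=aA$; the content is the same.
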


\begin{proof}
Let $I \in \calG_R$. We then have, by Proposition \ref{3.3}, $I^2=aI$ and $I = (a):_RI$ for some $a \in I$. Since $I=(a): I= R:a^{-1}I$, $I = R:(R:I)$ by Proposition \ref{3.0} (1). Therefore, setting $A = I:I~(=a^{-1}I)$, because $A = R:I$ by Corollary \ref{2.5}, we get $R:A = R : (R:I) = I = aA$. Hence, $\rho(I)=A$ belongs to the set of the right hand side. By Lemma \ref{3.6}, the induced correspondence is automatically injective, since $I = R:(R:I)$ for every $I \in \calG_R=\calX_R \cap \calZ_R$.

To see the induced correspondence is surjective, let $A \in \calY_R$ and assume that $aA = R:A$ for some $a \in W$. Let $I = aA$; hence $I= \eta(A) \in \calX_R$. We then have $I^2=aI$ and $I:I = aA:aA = A$, so that $I \in \calX_R \cap \calZ_R$ and $\rho(I) = A$. 
\end{proof}

If $R$ is a Gorenstein ring of dimension one, $L = R:(R:L)$ for every  finitely generated $R$-submodule  $L$ of $\rmQ(R)$ such that $\rmQ(R){\cdot} L= \rmQ(R)$. Therefore, by Lemma \ref{3.6}  we readily get the following.

\begin{cor}\label{3.8a}
Suppose that $R$ is a Gorenstein ring of dimension one. Then, $\eta\circ \rho=1_{\calX_R}$ and $\rho\circ \eta = 1_{\calY_R}$, so that the correspondence $\rho : \calX_R \to \calY_R$ is  bijective.
\end{cor}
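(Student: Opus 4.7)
The plan is essentially to check that the two compositions $\eta \circ \rho$ and $\rho \circ \eta$ both reduce, via Lemma \ref{3.6} (2) and (3), to the double-dual operation $L \mapsto R:(R:L)$, and then to invoke the hypothesis stated immediately before the corollary that in a one-dimensional Gorenstein ring this double dual is the identity on finitely generated submodules $L$ of $\rmQ(R)$ satisfying $\rmQ(R){\cdot}L = \rmQ(R)$.

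First I would verify that every element of $\calX_R$ and every element of $\calY_R$ meets the two hypotheses of the quoted Gorenstein biduality. For $I \in \calX_R \subseteq \calF_R$, we have $I \cap W \ne \emptyset$, so $I$ contains a unit of $\rmQ(R)$, whence $\rmQ(R){\cdot}I = \rmQ(R)$; finite generation follows from the fact that $R$ is Noetherian (which is part of being a Gorenstein ring of dimension one). For $A \in \calY_R$, pick a non-zerodivisor $a$ with $aA \subseteq R$, so that $A \subseteq a^{-1}R$; then $A$ is a submodule of a finitely generated module over the Noetherian ring $R$, hence finitely generated, and $R \subseteq A$ gives $\rmQ(R){\cdot}A = \rmQ(R)$.

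Second, I would combine these with Lemma \ref{3.6}. Part (2) of that lemma gives $\eta(\rho(I)) = R:(R:I)$ for every $I \in \calX_R$, and the previous step shows this equals $I$; hence $\eta \circ \rho = 1_{\calX_R}$. Symmetrically, part (3) gives $\rho(\eta(A)) = R:(R:A)$ for every $A \in \calY_R$, and biduality again yields $A$, so $\rho \circ \eta = 1_{\calY_R}$. The two identities together force $\rho$ to be bijective with inverse $\eta$.

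There is essentially no obstacle: the entire content is wrapped up in the two biduality facts (for regular trace ideals and for birational finite extensions), both of which are instances of the one supplied Gorenstein statement. The only small point worth stating explicitly is the finite generation verification in each of the two cases, since the biduality hypothesis is formulated for finitely generated fractional submodules of $\rmQ(R)$.
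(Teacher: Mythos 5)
Your proposal is correct and follows exactly the paper's route: the paper likewise deduces the corollary by combining the Gorenstein biduality $L = R:(R:L)$ with Lemma \ref{3.6} (2) and (3). The only difference is that you spell out the (routine) verification that members of $\calX_R$ and $\calY_R$ are finitely generated fractional ideals generating $\rmQ(R)$, which the paper leaves implicit.
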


We note the following.

\begin{prop}\label{3.9}
The following conditions are equivalent.
\begin{enumerate}[{\rm (1)}]
\item $\rho : \calX_R \to \calY_R$ is surjective.
\item $\eta : \calY_R \to \calX_R$ is injective.
\item $A=R:(R:A)$ for every $A \in \calY_R$.
\end{enumerate}
\end{prop}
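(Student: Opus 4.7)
The plan is to exploit the formulas assembled in Lemma \ref{3.6}, in particular the identities $\eta(\rho(I))=R:(R:I)$ and $\rho(\eta(A))=R:(R:A)$, together with the general fact from Proposition \ref{3.0}(1) that $R:X$ is always reflexive in the sense $R:X=R:(R:(R:X))$. Everything should follow by short formal manipulations; no new construction is required, so I do not expect a genuine obstacle.

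For $(1)\Leftrightarrow(3)$, I would simply invoke the description $\rho(\calX_R)=\{A\in\calY_R\mid A=R:(R:A)\}$ proved at the end of Lemma \ref{3.6}: surjectivity of $\rho$ is literally the assertion that this set equals $\calY_R$, which is condition (3).

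For $(3)\Rightarrow(2)$, suppose $\eta(A)=\eta(B)$ for some $A,B\in\calY_R$, i.e.\ $R:A=R:B$. Applying $R:(-)$ to both sides and using (3) twice yields $A=R:(R:A)=R:(R:B)=B$.

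For $(2)\Rightarrow(3)$, take an arbitrary $A\in\calY_R$ and set $B:=\rho(\eta(A))=R:(R:A)$; then $B\in\calY_R$ since $\rho$ lands in $\calY_R$. Proposition \ref{3.0}(1) applied to $X:=A$ gives $R:A=R:(R:(R:A))=R:B$, i.e.\ $\eta(A)=\eta(B)$. Injectivity of $\eta$ then forces $A=B=R:(R:A)$, as required.

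The only thing to be a little careful about is that each intermediate object lives in the correct set (so that $\eta$ and $\rho$ may be applied); this is immediate from the well-definedness of $\eta$ and $\rho$ already recorded before the statement, so the argument is really just a three-line chase through Lemma \ref{3.6} and Proposition \ref{3.0}(1).
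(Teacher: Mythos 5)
Your proposal is correct and follows essentially the same route as the paper: (1)$\Leftrightarrow$(3) by quoting the description of $\rho(\calX_R)$ in Lemma \ref{3.6}, and the remaining implications by combining $\rho(\eta(A))=R:(R:A)$ with Proposition \ref{3.0}(1), exactly as in the paper's argument (your direct check of (3)$\Rightarrow$(2) is just the unwound form of the paper's observation that (3) gives $\rho\circ\eta=1_{\calY_R}$, hence $\eta$ is injective). No gaps.
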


\begin{proof}
(1) $\Leftrightarrow$ (3) See Lemma \ref{3.6}.

(2) $\Rightarrow$ (3) Let $A \in \calY_R$ and set $L = R:(R:A)$. Therefore, $L = \rho(\eta(A)) \in \calY_R$, while $\eta (A) = R:A= R:L = \eta(L)$, where the second equality follows from Proposition \ref{3.0} (1). Hence, $A = L$, because $\eta$ is injective.

(3) $\Rightarrow$ (2)
 We have $\rho \circ \eta=1_{\calY_R}$ by Lemma \ref{3.6}, so that $\rho$ is surjective.
\end{proof}


We explore one example, which shows that when $\dim R \ge 2$, in general we cannot expect the bijectivity of the correspondence $\rho$.

\begin{ex}\label{3.7}
Let $S=k[X,Y]$ be the polynomial ring over a field $k$. We set $R= k[X^4, X^3Y, XY^3, Y^4]$ and $T = k[X^4,X^3Y, X^2Y^2, XY^3, Y^4]$ in $S$. Let $\m = (X^4,X^3Y,XY^3,Y^4)R$. Then $T = \overline{R}$  and $\m = R:T$. We have $\dim R=2$ and $\depth R_\m=1$, whence $R_\m$ is not Cohen-Macaulay. With this setting the following assertions hold true.

\begin{enumerate}[{\rm (1)}]
\item $\calX_R=\left\{ I \mid I~\text{is~an~ideal~of}~R~\text{with}~\operatorname{ht}_RI \ge 2,~\text{and}~I \not\subseteq \m~\text{or}~IT = I\right\}$. Hence, $\m^\ell \in \calX_R$ for all $\ell >0$.
\item $\calY_R=\left\{T,R\right\}$, and the correspondence $\eta : \calY_R \to \calX_R$ is injective.
\item The isomorphism classes in $\calZ_R$ are $[(X^4,X^6Y^2)]$ and $[R]$, where for each $I \in \calZ_R$, $[I]$ denotes the isomorphism class of $I$ in $\calZ_R$.
\end{enumerate}
\end{ex}


\begin{proof}
$T=\sum_{n \ge 0}S_{4n}$ is the Veronesean subring of $S$ with order $4$, whence $T$ is a normal ring with $\dim T=2$. We get $\m = T_+ \cap R$, where $T_+$ is the maximal ideal $(X^4, X^3Y, X^2Y^2, XY^3, Y^4)T$ of $T$. Because $T = R + kX^2Y^2$ and $\m {\cdot}X^2Y^2 \subseteq \m$, $T = \overline{R}$, the normalization of $R$, and $\m T = \m$. Hence, $R:T= \m$, and $\dim R = \dim R_\m = 2$. However, because $T/R \cong R/\m$, $\depth R_\m = 1$, whence $R_\m$ is not  Cohen-Macaulay. We get $\calY_R =\{T,R\}$, since $\ell_R(T/R)=1$. Therefore, since $\m = R:T$, the correspondence $\eta : \calY_R \to \calX_R$ is injective, and 
 by Lemma \ref{3.6} (1) the isomorphism classes in $\calZ_R$ are exactly $[(X^4, X^6Y^2)]$ (notice that $(X^4, X^6Y^2)=X^4T$) and $[R]$.

Let us check Assertion (1). Firstly, let $I$ be an ideal of $R$ with $\height_RI \ge 2$ such that $I \not\subseteq \m$ or $IT = I$. We will show that $I \in \calX_R$. We may assume $I \ne R$. Suppose that $I \not\subseteq \m$ and let $\fkp \in \Spec R$ such that $I \subseteq \fkp$. Then, $R_\fkp = T_\fkp$, since $\fkp \ne \m$, so that $R_\fkp$ is a Cohen-Macaulay ring with $\dim R_\fkp = 2$. Therefore, $\grade_RI = 2$, and hence  $I \in \calX_R$  by Proposition \ref{2.3}.

Suppose that $IT=I$ and let $f \in \Hom_R(I,R)$. Let $\iota : R \to T$ denote the embedding. Then, the composite map $g :I \overset{f}{\to} R \overset{\iota}{\to} T$ is $T$-linear, because it is the restriction of the homothety of some element of $\rmQ(R)=\rmQ(T)$, while $\grade_TI = \height_TI = 2$, since $\height_RI = 2$. Consequently, because $T:I = T$, we have $g(I) \subseteq I$, so that $f(I) \subseteq I$. Thus, $I \in \calX_R$ by Proposition \ref{2.3}.

Conversely, let $I \in \calX_R$. Therefore, $I$ is a non-zero ideal of $R$ with $R:I = I:I$. Hence, $R:I = R$ or $R:I = T$, because $\calY_R=\{R,T\}$. If $R:I = R$, then $\grade_RI \ge 2$. Therefore, $\height_RI \ge 2$, and $I \not\subseteq \m$, because $\depth R_\m=1$. Suppose that $R:I=T$. Then, $I$ is an ideal of $T$. We have to show $\height_RI \ge 2$. Assume the contrary and choose $\fkp \in \Spec R$ so that $I \subseteq \fkp$ and $\height_R\fkp = 1$. We then have $R_\fkp = T_\fkp$, and  $$R_\fkp : IR_\fkp = [R:I]_\fkp = [I:I]_\fkp =T_\fkp = R_\fkp.$$
This is impossible, because $IR_\fkp$ is a proper ideal in the DVR $R_\fkp = T_\fkp$. Therefore, $\height_RI \ge 2$, which completes the proof of Assertion (1).
\end{proof}

\section{The case where $R$ is a Gorenstein ring of dimension one}

We now concentrate our attention on the case where $R$ is a Gorenstein ring of dimension one.

\begin{prop}\label{3.8} Assume that $R$ is a Gorenstein ring of dimension one. We then have the following.
\begin{enumerate}[{\rm (1)}]
\item $I:I$ is a Gorenstein ring for every $I \in \calG_R$.
\item Let $A \in \calY_R$ and suppose that $A$ is a Gorenstein ring. Then, $A = I:I$ for some $I \in \calG_R$, if $R$ is semi-local. 
\end{enumerate}
Consequently, when $R$ is semi-local, the correspondence $\rho$ induces the bijection $$\calG_R \to \{A \in \calY_R \mid A ~ \text{is a Gorenstein ring} \}.$$
\end{prop}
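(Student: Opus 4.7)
The overall strategy is to identify, for any $A \in \calY_R$, the canonical module of $A$ with the fractional ideal $\omega_A = \Hom_R(A,R) = R:A$; this is available because $R$ is one-dimensional Gorenstein, so $R$ itself is a canonical module of $R$. Combined with the standard fact that a Cohen--Macaulay semi-local ring is Gorenstein exactly when its canonical module is free of rank one, this translates the Gorenstein property of $A$ into the principality of $R:A$ as an $A$-module, which is precisely the condition appearing in Corollary \ref{3.10}.

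For part (1), let $I \in \calG_R$ with $I^2 = aI$ and put $A = I:I$. Corollary \ref{3.10} gives $R:A = aA$, so $\omega_A = R:A \cong A$ as $A$-modules via multiplication by $a$. Since $A$ is a module-finite torsion-free extension of the one-dimensional Cohen--Macaulay ring $R$, it is itself Cohen--Macaulay of dimension one, and together with $\omega_A \cong A$ this forces $A$ to be Gorenstein. No semi-local hypothesis is required here.

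For part (2), assume $R$ (hence $A$) is semi-local. Since $A$ is Gorenstein of dimension one, $\omega_A = R:A$ is free of rank one over $A$, so $R:A \cong A$ as $A$-modules. As $R:A$ lies inside $\rmQ(A) = \rmQ(R)$ and is $A$-cyclic, sending $1 \in A$ to its image $\alpha$ under an $A$-isomorphism $A \to R:A$ yields $R:A = \alpha A$ with $\alpha \in W$. Setting $I = R:A = \alpha A$, one checks that $I = \eta(A) \in \calX_R$ and $I^2 = \alpha^2 A = \alpha I$ with $\alpha \in I$, so $I \in \calZ_R$; thus $I \in \calX_R \cap \calZ_R = \calG_R$ by Proposition \ref{3.3}, and $\rho(I) = \alpha A : \alpha A = A : A = A$. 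The final bijection then follows by combining (1), (2), and Corollary \ref{3.10}: the image of $\calG_R$ under $\rho$ coincides with $\{A \in \calY_R \mid aA = R:A \text{ for some } a \in W\}$, which the two parts identify with $\{A \in \calY_R \mid A \text{ is Gorenstein}\}$.

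The main technical obstacle is the clean verification of the identification $\omega_A = \Hom_R(A,R) = R:A$ as $A$-modules, together with the equivalence between Gorenstein-ness of $A$ and $\omega_A \cong A$; both rely on standard one-dimensional canonical-module theory, but the semi-local hypothesis is genuinely used in (2) to upgrade ``rank-one projective'' to ``free of rank one'' for $\omega_A$, and hence to extract a generator $\alpha$ that produces the good ideal $I$.
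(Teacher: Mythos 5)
Your proposal is correct and follows essentially the same route as the paper: identify $R:A=\Hom_R(A,R)$ with the canonical module of $A$ (the paper cites Herzog--Kunz, Satz 5.12), and then read off the Gorenstein property of $A$ from the condition $aA=R:A$ furnished by Corollary \ref{3.10}, using semi-locality in (2) exactly where you do, to pass from ``locally free of rank one'' to ``cyclic''. The only cosmetic difference is that in (2) you re-verify by hand that $I=R:A=\alpha A$ lies in $\calG_R$ with $\rho(I)=A$, which is precisely the surjectivity argument already contained in Corollary \ref{3.10}, whereas the paper simply invokes that corollary.
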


\begin{proof} 
(1) Let $A = I:I$. Then, by Corollary \ref{3.10}, $R:A = aA$ for some $a \in W$, so that $A$ is a Gorenstein ring (see \cite[Satz 5.12]{HK}; remember that $\Hom_R(A,R) \cong R:A$.)

(2)  We have $R:A = aA$ for some $a \in W$, because $R:A$ is a canonical ideal of  $A$ and $A$ is semi-local. Hence, by Corollary \ref{3.10}, $A = I:I $ for some $I \in \calG_R$.
\end{proof}


When $(R,\m)$ is a Gorenstein local ring of dimension one, we furthermore have the following, which characterizes Gorenstein local rings of dimension one, in which every regular trace ideal is a good ideal. The problem of when $A$ is a Gorenstein ring for every $A \in \calY_R$ is originated in the paper of H. Bass \cite{B}, where one can find many deep observations related to the problem. The equivalence of Assertions (1) and (3) in the following theorem is essentially due to \cite[(7.7) Theorem]{B}.


\begin{thm}\label{3.9a}
Let $R$ be a semi-local Gorenstein ring of dimension one. Then the following conditions are equivalent.
\begin{enumerate}[{\rm (1)}]
\item Every $A \in \calY_R$ is a Gorenstein ring.
\item $\calX_R= \calG_R$.
\end{enumerate}
When $(R,\m)$ is a local ring, one can add the following.
\begin{enumerate}[{\rm (1)}]
\item[{\rm (3)}] $\rme(R) \le 2$.
\end{enumerate}
\end{thm}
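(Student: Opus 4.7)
The strategy is to deduce (1) $\Leftrightarrow$ (2) directly from the correspondences already established in Corollary \ref{3.8a} and Proposition \ref{3.8}, and then to obtain (1) $\Leftrightarrow$ (3) in the local case by appealing to the classical theorem of Bass \cite[(7.7) Theorem]{B}.

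For (1) $\Leftrightarrow$ (2), the first observation is that $\calG_R \subseteq \calX_R$ holds in any commutative ring, by Proposition \ref{3.3}. Under the assumption that $R$ is Gorenstein of dimension one, Corollary \ref{3.8a} gives that $\rho\colon \calX_R \to \calY_R$ is a bijection. Moreover, in the semi-local case Proposition \ref{3.8} asserts that $\rho$ restricts to a bijection between $\calG_R$ and the subset $\{A \in \calY_R \mid A \text{ is Gorenstein}\}$ of $\calY_R$. Consequently the inclusion $\calG_R \subseteq \calX_R$ is an equality if and only if the corresponding inclusion $\{A \in \calY_R \mid A \text{ is Gorenstein}\} \subseteq \calY_R$ is an equality, which is precisely the statement that every $A \in \calY_R$ is a Gorenstein ring. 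This yields the equivalence of (1) and (2) with only a few lines of argument.

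For the local case, to prove (1) $\Leftrightarrow$ (3), I would invoke Bass's theorem. Specifically, \cite[(7.7) Theorem]{B} asserts that for a one-dimensional Cohen-Macaulay local ring $R$, having $\rme(R) \le 2$ is equivalent to $R$ being Gorenstein together with every module-finite birational extension of $R$ being Gorenstein. Since the hypothesis in our Theorem \ref{3.9a} already includes that $R$ is Gorenstein, this gives exactly (1) $\Leftrightarrow$ (3). (Note that under the local Noetherian hypothesis, $\calY_R$ is precisely the set of module-finite birational extensions of $R$ contained in $\rmQ(R)$, so Bass's formulation applies verbatim.)

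The main obstacle is essentially bookkeeping: one must confirm that Proposition \ref{3.8} really produces the bijection used above (this is where semi-locality enters, via the fact that a canonical ideal of a semi-local Gorenstein ring is principal up to a unit factor, so that $R:A = aA$ for some $a \in W$ whenever $A$ is Gorenstein), and one must be careful in citing Bass to match his hypotheses with ours. No new computations are needed; the theorem follows from assembling Corollary \ref{3.8a}, Proposition \ref{3.8}, and \cite[(7.7) Theorem]{B}.
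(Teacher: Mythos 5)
Your treatment of (1) $\Leftrightarrow$ (2) is essentially the paper's own argument: $\calG_R \subseteq \calX_R$ from Proposition \ref{3.3}, bijectivity of $\rho$ from Corollary \ref{3.8a}, and the restriction of $\rho$ to a bijection $\calG_R \to \{A \in \calY_R \mid A \text{ is Gorenstein}\}$ from Proposition \ref{3.8} (this is indeed where semi-locality enters, through the principality of the canonical ideal $R:A$). The paper phrases it by writing $A=I:I$ and using injectivity of $\rho$ rather than comparing the two inclusions under the bijection, but it is the same proof.

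Where you genuinely diverge is the local case, and there your plan has one soft spot. The paper does not rely on Bass directly: it proves (2) $\Rightarrow$ (3) by a short self-contained argument (if $R$ is not a DVR then $R:\fkm=\fkm:\fkm$, so $\fkm \in \calX_R=\calG_R$, hence $\fkm^2=a\fkm$ for some $a\in\fkm$, and Gorensteinness then forces $\rme(R)=2$), and it quotes \cite[Lemma 12.2]{GTT1} for (3) $\Rightarrow$ (1); combined with (1) $\Leftrightarrow$ (2) this closes the cycle. Your claim that \cite[(7.7) Theorem]{B} ``applies verbatim'' is too quick, and it is presumably why the paper says the equivalence is only ``essentially'' due to Bass: Bass's theorem is formulated in terms of the two-generator property of ideals rather than $\rme(R)\le 2$ (so you need the standard bridge, e.g. $\mu_R(\fkm^n)=\rme(R)$ for $n\gg 0$, to pass between the two), and his setting carries finiteness hypotheses on the integral closure that are not part of the hypotheses of Theorem \ref{3.9a}. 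The gap is repairable: for (1) $\Rightarrow$ (3) you can simply run the paper's three-line argument through (2), and for (3) $\Rightarrow$ (1) it is cleaner to cite \cite[Lemma 12.2]{GTT1}, whose hypotheses match exactly, than to adjust Bass's statement. With that adjustment your outline is complete; as written, the unexamined verbatim citation of Bass is the only real defect.
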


\begin{proof}

(2) $\Rightarrow$ (1)
We have by Lemma \ref{3.6} $A = I:I$ for some $I \in \calX_R$, so that by Proposition \ref{3.8} (1) $A$ is a Gorenstein ring.

(1) $\Rightarrow$ (2) Every good ideal of $R$ belongs to $\calX_R$ by Proposition \ref{3.3}. Conversely, let $I \in \calX_R$ and set $A= I:I$.  Then, by Proposition  \ref{3.8} (2), $A = J:J$ for some $J \in \calG_R$, since $A$ is a Gorenstein ring. Therefore, $I=J$, because $I, J \in \calX_R$ and the correspondence $\rho$ is bijective (Corollary \ref{3.8a}).

Suppose that $(R,\m)$ is a local ring.

(3) $\Rightarrow$ (1) See \cite[Lemma 12.2]{GTT1}.

(2) $\Rightarrow$ (3) We may assume that $R : \m = \m : \m$; otherwise, $R$ is a DVR, since $x \m = R$ for some $x \in R:\m$. Therefore, $\m \in \calX_R =\calG_R$, whence $\m^2 = a \m$ for some $a \in \m$. Thus, $\rme(R) =2$, because $R$ is a Gorenstein ring.
\end{proof}

We close this section with a few examples. To state Example \ref{3.3ex}, we need the notion of idealization, which we now briefly explain. Let $R$ be a commutative ring and $M$ an $R$-module. We set $A = R \oplus M$ as an additive group, and define the multiplication in $A$ by $(a,x) {\cdot} (b, y) = (ab, ay + bx) $ for $(a,x), (b, y) \in A$. Then, $A$ forms a commutative ring, which is denoted by $A=R\ltimes M$, and called the idealization of $M$ over $R$.

\begin{Example}\label{3.3ex}
Let $V$ be a DVR with $t$ a regular parameter. Let $R = V \ltimes V$ denote the idealization of $V$ over itself. Then, $R$ is a Gorenstein local ring with $\dim R= 1$, $\rme(R) = 2$, and  $\calX_R = \{t^nV \times V \mid n \ge 0\}$.
\end{Example}

\begin{proof} Because $R \cong V[X]/(X^2)$ where $X$ denotes an indeterminate, $R$ is a Gorenstein local ring with $\dim R= 1$, $\rme(R) = 2$. Let $K = \rmQ(V)$. Then, $\rmQ(R) = K \ltimes K$, and $\overline{R} = V \ltimes K$. Consequently $$\calY_R = \{V \ltimes L \mid L~\text{is~a~finitely~generated}~V\text{-submodule~of}~K~\text{such~that}~V \subseteq L\}.$$ Therefore, $\calX_R = \{t^nV \times V \mid n \ge 0\}$ by Corollary \ref{3.8a}, because $R$ is a Gorenstein local ring with $\dim R = 1$ and $R : [V \ltimes L]=\operatorname{Ann}_V(L/V) \times V$ for every finitely generated $V$-submodule $L$ of $K$ such that $V \subseteq L$.
\end{proof}

\begin{Example}\label{2.11}
Let $k$ be a field.
\begin{enumerate}[{\rm (1)}]
\item Let $R=k[[t^4, t^5, t^6]]$. Then $R$ is a Gorenstein ring, possessing
\[
\calX_{R}=\left\{(t^{8}, t^{9}, t^{10}, t^{11}), (t^{6}, t^{8}, t^{9}), (t^{5}, t^{6}, t^{8}), (t^{4}, t^{5}, t^{6}), R \right\} \cup \left\{ (t^{4}-at^5, t^{6}) \mid a \in k\right\} \ \ \text{and}
\]
\[
\calY_{R}=\left\{k[[t]], k[[t^{2}, t^{3}]], k[[t^{3}, t^{4}, t^{5}]], k[[t^{4}, t^{5}, t^{6}, t^{7}]], R\right\}\cup \left\{ k[[t^{2}+at^3, t^{5}]] \mid a \in k \right\},
\]
and the correspondence $\rho: \calX_R \to \calY_R$ is bijective. 

\item Let $R=k[[t^3, t^4, t^5]]$. Then $R$ is not a Gorenstein ring, possessing 
\[
\calX_{R}=\left\{(t^3, t^4, t^5), R\right\} \ \  \text{and}\ \ 
\calY_{R}=\left\{k[[t]], k[[t^{2}, t^{3}]], R\right\},
\]
and the correspondence $\rho: \calX_R \to \calY_R$ is not  surjective. 
\end{enumerate}

\end{Example}


\begin{proof}
(1) We set $V =k[[t]]$ (the formal power series ring) and $S = k[[t^4,t^5,t^6,t^7]]$. We will show the set $\calY_R$ consists of the rings in the list. Let $\m$ and $\m_S$ denote the maximal ideals of $R$ and $S$, respectively. We begin with the following.

\begin{claim}
The following assertions hold true.
\begin{enumerate}[{\rm (1)}] 
\item[\rm{$(1)$}] Set $B_a = k[[t^2+at^3, t^5]]$ for each $a \in k$. Then $S \subsetneq B_a \subseteq k[[t^2, t^3]]$, $B_a = S+S{\cdot}(t^2+at^3)$, and $\ell_S(B_a/S)= 1$.
\item[\rm{$(2)$}] Let $a,b \in k$. Then $B_a = B_b$ if and only if $a=b$.
\end{enumerate}
\end{claim}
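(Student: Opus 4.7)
Both parts should follow from elementary $t$-adic calculations inside $V = k[[t]]$, with the shorthand $u = t^2 + at^3$. The containment $B_a \subseteq k[[t^2, t^3]]$ is immediate since both generators $u, t^5$ lie there. The two identities that power everything are $u^2 = t^4 + 2at^5 + a^2 t^6 \in S$ and $\m_S u \subseteq S$ (the latter by checking the four products $t^i u$ for $i = 4,5,6,7$, each having $t$-order $\ge 6$).

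For part (1), I would introduce the auxiliary ring $M := S + Su$; the first identity makes $M$ closed under multiplication, while $\m_S u \subseteq S$ collapses it to $M = S + ku$. Since $M$ is a module-finite extension of the complete local ring $S$ and contains both generators $u, t^5$, the inclusion $B_a \subseteq M$ is immediate. The nontrivial step is the reverse inclusion, which I would reduce to showing $S \subseteq B_a$. For this, note that $B_a$ contains $u$ (of $t$-order exactly $2$) and $t^5$, so the products $u^i t^{5j}$ realise every element of the numerical semigroup $\langle 2, 5 \rangle = \{0, 2, 4, 5, 6, 7, \ldots\}$ as the $t$-order of an element of $B_a$; in particular $B_a$ contains an element of $t$-order exactly $n$ for every $n \ge 4$. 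A standard leading-term subtraction, combined with the $t$-adic closedness of $B_a$, then yields $t^n V \subseteq B_a$ for all $n \ge 4$, hence $S \subseteq B_a$. The equality $B_a = S + ku$, together with $u \notin S$ (order $2$) and $\m_S u \subseteq S$, now shows $B_a/S \cong k$ as an $S/\m_S$-module, so $\ell_S(B_a/S) = 1$ and $S \subsetneq B_a$.

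For part (2), the nontrivial direction is $B_a = B_b \Rightarrow a = b$. I would argue by contradiction: if $a \ne b$, then $u_a, u_b \in B_a$ gives $(a-b)t^3 \in B_a$, hence $t^3 \in B_a$, and then $t^2 = u_a - at^3 \in B_a$, forcing $k[[t^2, t^3]] \subseteq B_a$. This is incompatible with $\ell_S(B_a/S) = 1$ just established, because $k[[t^2, t^3]]/S$ is $2$-dimensional over $k$ with basis $\bar t^2, \bar t^3$. The only point in the whole argument that requires genuine care is the inclusion $S \subseteq B_a$ in part (1); everything else is routine $t$-order bookkeeping.
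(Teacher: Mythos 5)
Your proposal is correct, but the route through the key step differs from the paper's. For the crucial inclusion $S \subseteq B_a$, the paper introduces the auxiliary ring $T = k[[u^2,\, t^5,\, u^3,\, u t^5]]$ (with $u = t^2+at^3$), observes $T \subseteq S \cap B_a$, and then forces $T = S$ by a Nakayama argument from $\m_T S + \m_S^2 \supseteq (t^4,t^5,t^6,t^7)S = \m_S$; afterwards $B_a = S + S u = S + ku$ and $\ell_S(B_a/S)=1$ follow as in your write-up. You instead argue valuation-theoretically: the orders of the monomials $u^i t^{5j}$ fill the semigroup $\langle 2,5\rangle$, so $B_a$ contains an element of every order $\ge 4$, and a leading-term subtraction plus $t$-adic closedness of $B_a$ gives $t^4 V \subseteq B_a$, i.e. $S \subseteq B_a$. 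Both arguments are sound; the paper's Nakayama trick hides the convergence issue inside standard module-finiteness over a complete local ring, while your semigroup argument is more transparent about \emph{why} $S$ lands in $B_a$ but leaves the closedness of $B_a$ (and likewise the step $B_a \subseteq M = S+Su$) as an unproved assertion --- it deserves a one-line justification, e.g. $B_a$ and $M$ are $k[[t^5]]$-submodules of the finite free $k[[t^5]]$-module $V=k[[t]]$, hence finitely generated and $t$-adically closed, so they absorb the limits you take. For part (2) you also deviate: the paper expands $t^2+at^3$ in $B_b = S + k(t^2+bt^3)$ and compares coefficients, whereas you derive $t^3, t^2 \in B_a$ from $a \ne b$ and contradict $\ell_S(B_a/S)=1$ via $\ell_S(k[[t^2,t^3]]/S)=2$; this is equally valid and arguably more elementary, at the harmless cost of invoking part (1) again.
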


\begin{proof}

(1) We set  $T = k[[(t^2+at^3)^2, t^5, (t^2+at^3)^3, (t^2+at^3){\cdot}t^5]]$. Then, $T \subseteq B_a$, and $T \subseteq S$, since $S = k + t^4V$. Because $$\m_T S + \m_S^2 \supseteq (t^4, t^5, t^6, t^7)S =\m_S = t^4V,$$ we get $\m_T S = \m_S$, whence  $T = S$ (remember that $T/\m_T = S/\m_S =k$). 
Consequently, $T=S \subsetneq k[[t^2,t^3]]$, and $B_a =S+S{\cdot}(t^2+at^3)$, because $t^5 \in \m_S$. Therefore, $\mu_S(B_a) = 2$, and $\ell_S(B_a/S) = 1$, since $\m_S B_a = \m_S \subseteq S$.

(2) Suppose $B_a =B_b$. Then, since the $k$-space $B_a/\m_SB_a$ (resp. $B_b/\m_SB_b$) is spanned by the images of $1$ and $t^2 + at^3$ (resp. $1$ and $t^2 + bt^3$), we have $$t^2 + at^3 = \alpha + \beta(t^2 + bt^3) + \gamma$$ for some $\alpha, \beta \in k$ and $\gamma \in t^4V$. Hence, $\alpha = 0$, $\beta = 1$, and $a = b \beta$, so that $a = b$.
\end{proof}

By this claim, we see $R,S, k[[t^3,t^4,t^5]], B_a ~(a \in k), k[[t^2,t^3]], V \in \calY_R$. The relation of embedding among these rings is the following.



\vspace{-2em}

\[
\xymatrix{
           &
V          & \\
& k[[t^2, t^3]] \ar@{-}[u]   &  \\
B_a = k[[t^2+at^3, t^5]] \ar@{-}[ur] &  & k[[t^3, t^4, t^5]] \ar@{-}[ul]\\
& S=k[[t^4, t^5, t^6, t^7]] \ar@{-}[ur] \ar@{-}[ul] \\
& R  \ar@{-}[u]
}
\]


\noindent
We have to show that $\calY_R$ consists of these rings. To see it, let $A \in \calY_R$ and assume that $R \subsetneq A \subsetneq V$. Then, because $R$ is a Gorenstein local ring with $R:\m = R + kt^7$ and $R \subsetneq A$, we get $S = R + kt^7 \subseteq A$. Let us assume that $S \subsetneq A$ and set $\ell = \ell_S(A/S)$. Then $\ell = 1,2$, since $\ell_S(V/S) = 3$. We write $\m_AV = t^nV$ with an integer $n >0$. We then have $n \le 4$, since $t^4 \in \m_A$. Because $A = k + \m_A \not\subseteq S =k + t^4V$ and $A \ne V$, we furthermore have $n = 2$ or $3$. 

Suppose that $\ell = 1$. If $n = 3$, then choosing an element $f = t^3 + g$ with $g \in t^4V=\m_S$, we see $t^3 \in A$, so that $k[[t^3, t^4, t^5]] \subseteq A$. Therefore, $k[[t^3,t^4,t^5]] = A$, because $\ell_S(A/S) = 1$ and $S \subsetneq k[[t^3,t^4,t^5]] \subseteq A$. Let $n = 2$ and choose an element $f = t^2 + at^3 \in A$ with $a \in k$. Then, $B_a \subseteq A$, and $\ell_S(B_a/S) = 1$ by Claim (1), whence $A = B_a$. Suppose now that $\ell = 2$. Then $\ell_A(V/A) = 1$, whence $\m_A = A : V= t^nV$, so that $$A=k+t^nV=k[[t^n, t^{n+1}, \ldots, t^{2n-1}]]$$ with $n= 2$ or $3$.  This proves that $\calY_R = \{R, S, k[[t^3,t^4,t^5]], B_a ~(a \in k), k[[t^2, t^3]], V \}$.

Because $\calX_R = \{ R:A \mid A \in \calY_R\}$ by Corollary \ref{3.8a} it is direct to show that $\calX_{R}$ consists of the following ideals
$R:V=(t^{8}, t^{9}, t^{10}, t^{11}), R:k[[t^2,t^3]] =(t^{6}, t^{8}, t^{9}),  R:k[[t^3,t^4,t^5]]=(t^{5}, t^{6}, t^{8}), R:S=(t^{4}, t^{5}, t^{6})=\m, R$, and $R:B_a= (t^{4}-at^5, t^{6})$ with $a \in k$. Let us note a proof for the fact that $R : B_a = (t^4-at^5,t^6)$.  We set $I = R : B_a$. Firstly, notice that $B_a = R + R{\cdot}(t^2 + at^3)$, since $t^5, (t^2 + at^3)^2 \in \m$.  We then have $t^6 \in I$, since $R:V = t^8V \subseteq I$. Let $\varphi \in I$ and write $\varphi = \alpha t^4 + \beta t^5 + \gamma t^6 + \delta$ with $\alpha, \beta, \gamma \in k$ and $\delta \in R:V$. Then, $\alpha t^4 + \beta t^5 \in I$, and 
$(\alpha t^4 + \beta t^5)(t^2 + at^3) \in R$ if and only if $(\alpha a + \beta)t^7 \in R$ if and only if $\beta = -\alpha a$, which shows $I = (t^4 -a t^5, t^6)$.

(2) The fact $\calY_{R} =\{k[[t^3, t^4, t^5], k[[t^2,t^3]], V\}$ readily follows from Assertion (1). The assertion on $\calX_{R}$ is a special case of the following.
\end{proof}

\begin{prop}\label{3a}
Let $(R,\m)$ be a one-dimensional Cohen-Macaulay local ring and let $V=\overline{R}$ denote the integral closure  of $R$ in $\rmQ(R)$. Assume that $R \ne V$ but $\m V \subseteq R$. Then $\calX_R=\{\m, R\}$.
\end{prop}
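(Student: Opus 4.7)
The plan is to prove both containments of $\calX_R = \{R, \m\}$. The containment $R \in \calX_R$ is immediate; to show $\m \in \calX_R$, I would first derive $V = \m : \m = R : \m$ using the hypothesis $\m V \subseteq R$. The inclusion $V \subseteq \m : \m$ holds because $\m V = R$ is impossible (it would force $V = 0$ by Nakayama applied to the finitely generated $R$-algebra generated by a witness of $1 \in \m V$), so $\m V = \m$; the reverse inclusion $\m : \m \subseteq V$ is the standard integrality argument. Similarly $R : \m = V$: any $x \in R : \m$ either gives $x\m = R$ (which would make $\m$ principal and $R = V$ a DVR, contrary to hypothesis) or $x\m \subseteq \m$, placing $x$ in $\m : \m = V$. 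Hence $\m \in \calX_R$ by Corollary \ref{2.5}, and in particular $V$ is finite over $R$.

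For the reverse inclusion, I would take $I \in \calX_R$ with $I \ne R$, so $I \subseteq \m$. The identical colon analysis yields $R : I = V$: for $v \in V$, $vI \subseteq V\m = \m \subseteq R$ gives $V \subseteq R : I$, while $R : I = I : I \subseteq V$ by integrality. Thus $I = (R : I) I = VI$, so $I$ is a $V$-ideal.

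The key structural step, and the main obstacle, is to show $V$ is a finite product of principal ideal domains, so that every regular $V$-ideal is invertible. Since $V$ is Noetherian, one-dimensional, Cohen--Macaulay, and integrally closed in $\rmQ(V) = \rmQ(R)$ by construction, the orthogonal idempotents of $\rmQ(V)$ attached to its minimal primes satisfy $X^2 - X = 0$, are integral over $V$, and hence lie in $V$. This decomposes $V$ into connected factors, each still Noetherian Cohen--Macaulay of dimension one and integrally closed, with no nontrivial idempotents in its total quotient ring (else they would be integral, hence in the factor). Each such factor then has a unique minimal prime, and Krull's intersection theorem, applied with a non-zerodivisor in the Jacobson radical, eliminates nilpotents; hence the factor is a domain, and being a one-dimensional Noetherian integrally closed semi-local domain, a semi-local Dedekind domain, i.e., a PID.

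With $V$ a product of PIDs, every regular $V$-ideal is invertible. Suppose for contradiction $I \subsetneq \m$. Then $\m I^{-1}$ is a $V$-ideal strictly containing $V = I I^{-1}$, yet $(\m I^{-1}) I = \m V = \m \subseteq R$ gives $\m I^{-1} \subseteq R : I = V$, a contradiction. Hence $I = \m$, completing the proof.
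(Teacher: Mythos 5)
Your proof is correct and follows essentially the same route as the paper's: establish $V=\m:\m=R:\m$ to get $\m\in\calX_R$, show that any proper $I\in\calX_R$ satisfies $R:I=I:I=V$ and is therefore a regular ideal of $V$, and use the fact that $V$ is a finite product of principal ideal domains to force $I=\m$. The only differences are cosmetic: you supply the (correct) structural argument that $V$ decomposes into semi-local Dedekind factors, which the paper merely recalls in passing, and you finish with the invertibility computation on $\m I^{-1}$ where the paper instead notes $I\cong V\cong \m$ as $V$-modules and concludes $I=\tau_R(I)=\tau_R(\m)=\m$.
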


\begin{proof}
Because $R \ne V$, we have $R:\m = \m : \m$, whence $\m \in \calX_R$, so that $\{\m, R \} \subseteq \calX_R$. Let $I \in \calX_R$ and set $A = I:I ~(=R:I)$. If $R = A$, then $\grade_RI \ge 2$, and $I=R$. Suppose that $R \subsetneq A$. Then, $I \subseteq \m$, whence $V \subseteq R :\m \subseteq A= R:I = I:I \subseteq V$. Therefore, $A = V$. Consequently, $I$ is an ideal of $V$, whence $I \cong V \cong \m$ as $V$-modules (remember that $V$ is a direct product of finitely many principal ideal domains). Therefore, $\tau_R(I)= \tau_R(\m)=\m$, because $I \cong \m$ as an $R$-module. Hence $\calX_R = \{\m, R\}$.
\end{proof}

We will use Proposition \ref{3a} later in Section 5, in order to prove Proposition \ref{2.7.5}.




\section{Modules in which every submodule is a trace module}\label{sec2}
In this section, we are interested in the question of, for a given $R$-module $X$,  when every $R$-submodule of $X$ is a trace module in it. As is shown in \cite{L2}, this is the case when $X=R$ and $R$ is a self-injective ring. Our goal is the following, which is known by \cite[Theorem 3.5]{LP} in the case where $R$ is a Noetherian local ring and $X=R$.

\begin{thm}\label{1.3}
Suppose that $R$ is a Noetherian ring and let $X$ be an $R$-module.
Then the following conditions are equivalent.
\begin{enumerate}[{\rm (1)}]
\item Every $R$-submodule of $X$ is a trace module in $X$.
\item Every cyclic $R$-submodule of $X$ is a trace module in $X$. 
\item There is an embedding $$0 \to X \to \bigoplus_{\m \in \Max R} \rmE_R (R/\m) $$
of $R$-modules, where for each $\m \in \Max R$, $\rmE_R(R/\m)$ denotes the injective envelope of the cyclic $R$-module $R/\m$.
\end{enumerate}
\end{thm}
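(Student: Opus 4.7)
The implication $(1) \Rightarrow (2)$ is immediate, and $(2) \Rightarrow (1)$ follows from Proposition \ref{2.3}(4): for any $R$-submodule $Y \subseteq X$ and any $f \in \Hom_R(Y, X)$, the restriction $f|_{Ry}$ lands in $Ry$ by (2) for every $y \in Y$, whence $f(Y) \subseteq Y$. The core of the theorem is the equivalence $(2) \Leftrightarrow (3)$, and I first reformulate (2) concretely: it is equivalent to the statement that for all $x, y \in X$ with $\Ann_R x \subseteq \Ann_R y$ one has $y \in Rx$, since $R$-homomorphisms $Rx \to X$ sending $x \mapsto y$ correspond exactly to such $y$.

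For $(2) \Rightarrow (3)$ I establish two facts. First, $\Ass_R X \subseteq \Max R$: if $\p = \Ann_R x$ were a non-maximal associated prime and $\m \supsetneq \p$ were maximal, picking $a \in \m \setminus \p$ gives $\Ann_R(ax) = (\p :_R a) = \p$ (as $\p$ is prime), so (2) forces $Rx = R(ax) = aRx$, whence $(1 - ra)x = 0$ for some $r \in R$, placing $1 \in \m$, a contradiction. Second, for each $\m \in \Max R$ the socle component $(0 :_X \m)$ is at most one-dimensional over $R/\m$, since any two nonzero elements there have annihilator exactly $\m$ and therefore generate the same cyclic submodule by (2). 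Together these imply $\Soc X$ embeds into $\bigoplus_{\m \in \Max R} R/\m$ with at most one copy of each simple, and that $X$ is essential in $\rmE_R(\Soc X)$; the standard decomposition of injectives over the Noetherian ring $R$ then yields
\[
X \hookrightarrow \rmE_R(\Soc X) \hookrightarrow \bigoplus_{\m \in \Max R} \rmE_R(R/\m).
\]

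For $(3) \Rightarrow (2)$, set $E = \bigoplus_{\m \in \Max R} \rmE_R(R/\m)$. Since each $\rmE_R(R/\m)$ is $\m$-primary, $\Hom_R(\rmE_R(R/\m), \rmE_R(R/\m')) = 0$ for $\m \ne \m'$, whence $\End_R(E) \cong \prod_\m \widehat{R_\m}$ by Matlis duality. Given $x, y \in X$ with $\Ann_R x \subseteq \Ann_R y$, the map $Rx \to X$, $x \mapsto y$, extends by injectivity of $E$ to $\Phi = (\Phi_\m) \in \prod_\m \widehat{R_\m}$ satisfying $\Phi_\m x_\m = y_\m$ componentwise. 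Let $S = \{\m \in \Max R : x_\m \ne 0\}$, a finite set; since the minimal primes of $\Ann_R x$ lie in $S$, the containment $\Ann_R x \subseteq \Ann_R y$ forces $y_\m = 0$ for $\m \notin S$. For each $\m \in S$, $\Ann_{R_\m}(x_\m)$ is $\m R_\m$-primary, and one verifies $\widehat{R_\m}/\Ann_{\widehat{R_\m}}(x_\m) = R/(\Ann_{R_\m}(x_\m) \cap R)$, so there exists $r_\m \in R$ with $r_\m x_\m = y_\m$. The Chinese Remainder Theorem, applied to the pairwise coprime $\m$-primary ideals $\Ann_{R_\m}(x_\m) \cap R$ for $\m \in S$, then delivers $r \in R$ with $r \equiv r_\m$ modulo each of them; this $r$ satisfies $rx = y$, giving $y \in Rx$.

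The principal obstacle is this last step: transferring the collection of pro-elements $(\Phi_\m)_\m \in \prod_\m \widehat{R_\m}$ back to a single honest element of $R$ requires careful bookkeeping on annihilators across $R$, $R_\m$, and $\widehat{R_\m}$. Everything else is conceptually straightforward once (2) is recast in terms of annihilators and the Matlis description of $\End_R(E)$ is in hand.
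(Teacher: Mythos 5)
Your proof is correct, but it follows a genuinely different route from the paper's. The paper reduces to the local case: it first proves a localization lemma (trace-ness of a finitely presented submodule is a local condition) and a local theorem for $(R,\m)$, where the implication corresponding to your $(3)\Rightarrow(2)$ is handled by extending $f$ to an endomorphism of $\rmE_R(R/\m)$, i.e.\ a homothety by some $\alpha\in\widehat{R}$, and then simply observing that every $R$-submodule of $\rmE_R(R/\m)$ is an $\widehat{R}$-submodule, so $f(Y)=\alpha Y\subseteq Y$ with no need to descend $\alpha$ to an element of $R$; the global statement then follows by localizing cyclic (hence finitely presented) submodules. You instead work globally throughout: you recast condition (2) as ``$\Ann_R x\subseteq \Ann_R y$ implies $y\in Rx$,'' prove $(2)\Rightarrow(3)$ by showing $\Ass_R X\subseteq\Max R$ and that each $(0:_X\m)$ is at most one-dimensional (close in spirit to the paper's local socle argument, but localization-free), and prove $(3)\Rightarrow(2)$ via $\End_R(E)\cong\prod_{\m}\widehat{R_\m}$ followed by a Chinese Remainder descent of the pro-element $(\Phi_\m)_\m$ to an honest $r\in R$ with $rx=y$. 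The CRT step is exactly the price you pay for avoiding the paper's localization lemma, and it works because each $\Ann_R(x_\m)$ is $\m$-primary and these ideals are pairwise comaximal; conversely, your approach buys a self-contained argument and the sharper element-level conclusion $y\in Rx$. A couple of steps are stated tersely but are fine: the essentiality of $\Soc X$ in $X$ (needed for $X\hookrightarrow \rmE_R(\Soc X)$) does follow from $\Ass_R X\subseteq\Max R$, and $\Hom_R(\rmE_R(R/\m),\rmE_R(R/\m'))=0$ for $\m\ne\m'$ (plus the fact that any map from $\rmE_R(R/\m)$ into the direct sum lands in the $\m$-component) justifies your description of $\End_R(E)$.
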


To prove Theorem \ref{1.3}, we need some preliminaries. The following is a direct consequence of Proposition  \ref{2.3}.

\begin{prop}\label{2.3.2} The following assertions hold true.
\begin{enumerate}[{\rm (1)}]
\item Let $Y$ be an $R$-submodule of $X$. If every cyclic $R$-submodule of $Y$ is a trace module in $X$, then $Y$ is a trace module in $X$.
\item Let $Z$ and $Y$ be $R$-submodules of $X$ and assume that $Z \subseteq Y$. If $Z$ is a trace module in $X$, then $Z$ is a trace module in $Y$.
\item $($\cite{L2}$)$ If $R$ is a self-injective ring, then every ideal of $R$ is a trace ideal  in $R$. 
\end{enumerate}
\end{prop}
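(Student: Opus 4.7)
The approach is to reduce each of (1), (2), (3) to the closure condition in Proposition \ref{2.3}(4): an $R$-submodule $Y \subseteq X$ is a trace module in $X$ if and only if $f(Y) \subseteq Y$ for every $f \in \Hom_R(Y,X)$. Once this reformulation is adopted, all three parts become direct consequences of short diagram chases, and I do not expect a genuine obstacle in any of them.

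For (1), I would pick an arbitrary $f \in \Hom_R(Y,X)$ and $y \in Y$, and restrict $f$ to the cyclic submodule $Ry$. The hypothesis that $Ry$ is a trace module in $X$, combined with Proposition \ref{2.3}(4), forces $f(Ry) \subseteq Ry \subseteq Y$, so in particular $f(y) \in Y$. Letting $y$ range over $Y$ yields $f(Y) \subseteq Y$, and then Proposition \ref{2.3}(4) (applied in the other direction) gives that $Y$ is a trace module in $X$. For (2), every $g \in \Hom_R(Z,Y)$ composes with the inclusion $Y \hookrightarrow X$ to produce a morphism $Z \to X$ with the same image as $g$; by the hypothesis that $Z$ is a trace module in $X$, that image is contained in $Z$, hence $g(Z) \subseteq Z$, and once again Proposition \ref{2.3}(4) upgrades this to the conclusion that $Z$ is a trace module in $Y$.

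For (3), given an ideal $I$ of $R$ and $f \in \Hom_R(I,R)$, the self-injectivity of $R$ lets me extend $f$ to some $\tilde f \in \Hom_R(R,R)$; every such endomorphism of $R$ is multiplication by the single element $c := \tilde f(1)$, so $f(I) = cI \subseteq I$ precisely because $I$ is an ideal of $R$. Proposition \ref{2.3}(4) then gives that $I$ is a trace ideal in $R$. The only point that deserves to be flagged is for (3): one must recognize that an $R$-linear endomorphism of $R$ is multiplication by its value at $1$, so that the ideal property of $I$ is exactly what enforces the closure $f(I) \subseteq I$; once that is noticed, the argument is purely formal.
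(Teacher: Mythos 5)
Your proof is correct and follows exactly the route the paper intends: the paper gives no written argument, stating the proposition as a direct consequence of Proposition \ref{2.3} (citing \cite{L2} for part (3)), and your verifications via the criterion $f(Y)\subseteq Y$ of Proposition \ref{2.3}(4) — restriction to cyclic submodules for (1), composition with the inclusion $Y\hookrightarrow X$ for (2), and extension of $f\in\Hom_R(I,R)$ to a homothety of $R$ via self-injectivity for (3) — are precisely the routine checks being left to the reader.
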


We begin with the following.

\begin{lem}\label{2.3.5}
Let $Y$ be an $R$-submodule of $X$ and assume that $Y$ is a finitely presented $R$-module. Then the following conditions are equivalent.
\begin{enumerate}[{\rm (1)}]
\item $Y$ is a trace module in $X$.
\item $Y_\fkm$ is a trace module in $X_\fkm$ for all $\fkm \in \Max R$.
\item $Y_\fkp$ is a trace module in $X_\fkp$ for all $\fkp \in \Spec R$.
\end{enumerate}
\end{lem}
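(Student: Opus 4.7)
The plan is to reduce the trace-module condition to the surjectivity of a single $R$-linear map between $\Hom$-modules, then exploit that the finite presentation of $Y$ makes this map behave well under localization.

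First I would invoke Proposition \ref{2.3}: the inclusion $\iota:Y\hookrightarrow X$ makes $Y$ a trace module in $X$ precisely when the induced map $\iota_{*}:\Hom_R(Y,Y)\to \Hom_R(Y,X)$ is an isomorphism. Since $\iota$ is a monomorphism and $\Hom_R(Y,-)$ is left exact, $\iota_{*}$ is automatically injective, so $Y$ is trace in $X$ iff $\iota_{*}$ is surjective, iff $C:=\Coker(\iota_{*})$ vanishes. This reformulation is the key reduction.

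Next, because $Y$ is finitely presented, the standard fact that $\Hom_R(Y,-)$ commutes with localization at every $\fkp\in\Spec R$ gives natural identifications
\[
\Hom_R(Y,Y)_{\fkp}\;\cong\;\Hom_{R_{\fkp}}(Y_{\fkp},Y_{\fkp}),\qquad \Hom_R(Y,X)_{\fkp}\;\cong\;\Hom_{R_{\fkp}}(Y_{\fkp},X_{\fkp}),
\]
under which the localized map $(\iota_{*})_{\fkp}$ corresponds to the map $(\iota_{\fkp})_{*}$ induced by the inclusion $Y_{\fkp}\hookrightarrow X_{\fkp}$. Applying the first step over $R_{\fkp}$, $(\iota_{\fkp})_{*}$ is an isomorphism iff $Y_{\fkp}$ is a trace module in $X_{\fkp}$.

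From here the implications are immediate. For (1)$\Rightarrow$(3), localize the isomorphism $\iota_{*}$ at $\fkp$ and use the identifications above. The implication (3)$\Rightarrow$(2) is trivial. For (2)$\Rightarrow$(1), observe that the cokernel $C=\Coker(\iota_{*})$ satisfies $C_{\fkm}\cong\Coker((\iota_{\fkm})_{*})=0$ for every $\fkm\in\Max R$ by hypothesis, and vanishing of a module is a local property at maximal ideals, so $C=0$ and $\iota_{*}$ is surjective.

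I do not foresee a genuine obstacle; the entire argument rests on the commutation of $\Hom_R(Y,-)$ with localization, which is exactly what the finite presentation hypothesis on $Y$ delivers. The only point one should state carefully is that injectivity of $\iota_{*}$ is automatic, so the whole equivalence reduces to tracking the cokernel through localization.
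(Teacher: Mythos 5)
Your proof is correct and follows essentially the same route as the paper: reduce, via Proposition \ref{2.3}, to the vanishing of the cokernel of $\iota_*:\Hom_R(Y,Y)\to\Hom_R(Y,X)$, and use the finite presentation of $Y$ to commute $\Hom_R(Y,-)$ with localization so that this vanishing becomes a local condition. The only difference is that you spell out the automatic injectivity of $\iota_*$ and the compatibility of the localized map with the inclusion $Y_\fkp\hookrightarrow X_\fkp$, which the paper leaves implicit.
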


\begin{proof}
Let $\iota : Y \to X$ denote the embedding and let $$\iota_* : \Hom_R(Y,Y) \to \Hom_R(Y,X)$$ be the induced homomorphism. We set $C = \Coker ~\iota_*$. By Proposition \ref{2.3}, $Y$ is a trace module in $X$, if and only if $C = (0)$, that is $C_\fkp= (0)$ for all $\fkp \in \Spec R$. On the other hand, since $Y$ is finitely presented, we have $$S^{-1}\left[\Hom_R(Y,Z)\right]= \Hom_{S^{-1}R}(S^{-1}Y,S^{-1}Z)$$ for every $R$-module $Z$ and for every multiplicatively closed subset $S$ in $R$. Hence, the condition that $Y$ is a trace module in $X$ is a local condition. 
\end{proof}

We firstly consider the case where $R$ is a local ring.

\begin{thm}\label{lem2.11}
Let $(R,\m)$ be a Noetherian local ring and $X$  an $R$-module. Then the following conditions are equivalent.
\begin{enumerate}[{\rm (1)}]
\item Every $R$-submodule of $X$ is a trace module in $X$.
\item There is an embedding $0 \to X \to E$ of $R$-modules, where $E=\rmE_R(R/\fkm)$ denotes the injective envelope of $R/\fkm$.
\end{enumerate}
\end{thm}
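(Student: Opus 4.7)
The plan is to prove $(1) \Leftrightarrow (2)$ by direct verification of the characterizations in Proposition \ref{2.3}. First a preliminary: a homomorphism $Rx \to X$ is determined by the image of $x$, which can be any element of $(0:_X \ann_R(x))$, so
\[
\tau_X(Rx) = (0:_X \ann_R(x)) \quad \text{for every } x \in X.
\]
Hence condition (1), restricted to cyclic submodules, reads $Rx = (0:_X \ann_R(x))$ for all $x \in X$; I shall refer to this identity as $(\sharp)$.

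For $(2) \Rightarrow (1)$, assume $X \hookrightarrow E$ and let $Y \subseteq X$. I would verify Proposition \ref{2.3}(4), namely that $f(Y) \subseteq Y$ for each $f \in \Hom_R(Y, X)$. Composing $f$ with $X \hookrightarrow E$ and invoking injectivity of $E$, extend to $\tilde f \in \End_R(E)$. The key auxiliary fact is that for every $y \in E$, the cyclic submodule $Ry$ coincides with $(0:_E \ann_R(y))$: setting $\bar R = R/\ann_R(y)$, the submodule $(0:_E \ann_R(y))$ is the injective hull of $\bar R/\fkm\bar R$ over $\bar R$, and Matlis duality over the Artinian local ring $\bar R$ gives $\ell_{\bar R}((0:_E \ann_R(y))) = \ell_{\bar R}(\bar R) = \ell_R(Ry)$, forcing equality. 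Since $\ann_R(y) \cdot \tilde f(y) = \tilde f(\ann_R(y) \cdot y) = 0$, this yields $\tilde f(y) \in (0:_E \ann_R(y)) = Ry \subseteq Y$.

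For $(1) \Rightarrow (2)$, I use $(\sharp)$ in two stages. \emph{Stage A ($X$ is $\fkm$-torsion):} pick $x \in X$ and set $\fka = \ann_R(x)$; it suffices to show $\sqrt{\fka} = \fkm$, since then $\fkm^n \subseteq \fka$ by Noetherianity. Take any $\fkp \in \Ass(R/\fka)$ and write $\fkp = \ann_R(y)$ with $y \in Rx \subseteq X$. Assume for contradiction that $\fkp \ne \fkm$ and choose $r \in \fkm \setminus \fkp$. Since $\fkp$ is prime, $\ann_R(ry) = \fkp$. Applying $(\sharp)$ to both $y$ and $ry$ gives $R(ry) = (0:_X \fkp) = Ry$, so $y = sry$ for some $s \in R$, hence $(1 - sr)y = 0$, i.e., $1 - sr \in \fkp$; but $1 - sr$ is a unit, contradiction. \emph{Stage B ($\Soc X$ has length at most one):} any $0 \ne x \in \Soc X$ satisfies $\ann_R(x) = \fkm$, whence by $(\sharp)$, $\Soc X = (0:_X \fkm) = Rx \cong R/\fkm$. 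Since $X$ is $\fkm$-torsion, $\Soc X \hookrightarrow X$ is essential, so $X$ embeds into $\rmE_R(\Soc X) = \rmE_R(R/\fkm) = E$.

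I expect the main obstacle to be Stage A of $(1) \Rightarrow (2)$: the subtlety is to select the correct test element (a generator of an associated prime of $R/\fka$) and to apply $(\sharp)$ simultaneously to $y$ and $ry$ so as to manufacture a unit inside $\fkp$. The direction $(2) \Rightarrow (1)$ is comparatively routine, once the identity $Ry = (0:_E \ann_R(y))$ inside $E$ is in hand via Matlis duality.
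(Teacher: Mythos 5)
Your proof is correct, and while the overall skeleton coincides with the paper's (for $(2)\Rightarrow(1)$, verify Proposition \ref{2.3}(4) by extending $f$ to an endomorphism of $E$; for $(1)\Rightarrow(2)$, show $X$ is $\m$-power torsion with simple socle), the key mechanisms differ in both directions. In $(2)\Rightarrow(1)$ the paper identifies $\End_R(E)$ with $\widehat{R}$ and uses that every $R$-submodule of $E$ is an $\widehat{R}$-submodule, so the extended map is a homothety stabilizing $Y$; you instead avoid the completion entirely by proving $Ry=(0):_E\ann_R(y)$ via a length count with Matlis duality over the Artinian ring $R/\ann_R(y)$, and then conclude $\tilde f(y)\in Ry\subseteq Y$ elementwise. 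In $(1)\Rightarrow(2)$ the paper works with finitely generated submodules $M$: it kills positive depth by comparing $\tau_X(aM)=\tau_X(M)$ for a non-zerodivisor $a$, and then shows a fixed socle generator lies in every nonzero submodule of $M$ (via a surjection $N\twoheadrightarrow R/\m\cong Rx$ and the trace property of $N$), so that $\ell_R(M)<\infty$ and the socle is simple; you instead exploit only the cyclic case, in the form $\tau_X(Rx)=(0):_X\ann_R(x)$, and run an associated-prime argument (producing the unit $1-sr$ inside a prime) to get $\sqrt{\ann_R(x)}=\m$, after which socle simplicity falls out of the same identity. A small bonus of your route is that the implication $(1)\Rightarrow(2)$ visibly uses only the hypothesis that every \emph{cyclic} submodule is a trace module, which is exactly condition (2) of Theorem \ref{1.3}; the paper obtains that strengthening separately through Proposition \ref{2.3.2}(1). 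The trade-off is that your argument leans on Matlis duality and associated primes where the paper uses only Nakayama, the Krull intersection theorem, and the structure of $\End_R(E)$.
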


\begin{proof} 
(1) $\Rightarrow$ (2) We may assume that $X \ne (0)$. It suffices to show that $\ell_R(M) < \infty$ and $\ell_R \left((0):_M \fkm\right)=1$ for every non-zero finitely generated $R$-submodule $M$ of $X$. First of all, we show $\depth_RM=0$. In fact, suppose that $\depth_R M>0$, and let $a \in \fkm$ be a non-zerodivisor on $M$. We then have by Proposition  \ref{2.3} $aM = \tau_X(aM)$ and $M=\tau_X(M)$, since both $aM$ and $M$ are trace modules in $X$, while $\tau_X(aM)=\tau_X(M)$, because $aM \cong M$. Hence, $aM=M$, which is impossible because $M \ne (0)$. We now fix one socle  element $0 \ne x \in (0):_M \fkm$ of $M$. Let $N$ be an arbitrary non-zero $R$-submodule of $M$. Then, since $R/\m$ is a homomorphic image of $N/\m N$ and since $R/\m \cong Rx$, we get a homomorphism $f : N \to M$ such that $f(N) = Rx$, which implies $x \in N$, because $N$ is a trace module in $X$ (see Proposition \ref{2.3}). Therefore, if $\dim_RM >0$, then $x \in \m^n M$ for all $n > 0$, because $\m^n M \ne (0)$, so that $x \in \bigcap_{n >0}\m^n M =(0)$, which is a contradiction. Hence, $\dim_RM=0$, that is $\ell_R(M) < \infty$. The above observation also shows that $x \in Ry$ for every $0 \ne y \in (0):_M\m$, whence $\ell_R\left((0):_M\m\right) = 1$, and therefore, $X$ is an $R$-submodule of $E$.

(2) $\Rightarrow$ (1) By Proposition \ref{2.3.2} (2), we may assume $X = E$. Let $Y$ be an $R$-submodule of $E$. It suffices to show that $f(Y) \subseteq Y$ for all $f \in \Hom_R(Y,E)$. We take a homomorphism $g : E \to E$ so that $f = g \circ  \iota$, where $\iota : Y \to E$ denotes the embedding. Let $\widehat{R}$ denotes the $\m$-adic completion of $R$, and remember that $E$ is an $\widehat{R}$-module such that $$\Hom_R(E,E) = \Hom_{\widehat{R}}(E,E) = \widehat{R}.$$
 Choose $\alpha \in \widehat{R}$ so that $g$ is the homothety by $\alpha$. We then have $\alpha Y \subseteq Y$, because every $R$-submodule of $E$ is actually an $\widehat{R}$-submodule of $E$. Therefore $$f(Y) = g(Y) = \alpha Y \subseteq Y,$$ and hence $Y$ is a trace module in $E$. 
\end{proof}

We are now ready to prove Theorem \ref{1.3}.

\begin{proof}[Proof of Theorem $\rm \ref{1.3}$]
(1) $\Leftrightarrow$ (2) See Proposition \ref{2.3.2} (1).

(3) $\Rightarrow$ (1) Let $\m \in \Max R$. We then have the embedding $0 \to X_\m \to \rmE_{R_\m}(R_\m/\m R_\m)$, since $$[\bigoplus_{\n \in \Max R} \rmE_R (R/\n)]_\m = \rmE_{R_\m}(R_\m/\m R_\m).$$  Therefore, by Theorem \ref{lem2.11}, for every cyclic $R$-submodule $Y$ of $X$, $Y_\m$ is a trace module in $X_\m$ for all $\m \in \Max R$, so that Lemma \ref{2.3.5} guarantees that $Y$ is a trace module in $X$. Hence, by Proposition \ref{2.3.2} (2), every $R$-submodule of $X$ is a trace module in $X$.

(1) $\Rightarrow$ (3) Let $\m \in \Max R$. Since every cyclic $R_\m$-submodule of $X_\m$ is a localization of a cyclic $R$-submodule of $X$, by Lemma \ref{2.3.5} every $R_\m$-submodule of $X_\m$ is a trace module in $X_\m$. Therefore, by Theorem \ref{lem2.11}, for every $\m \in \Max R$ we have $$\Ass_{R_\m}X_\m \subseteq \{\m R_\m\}\ \ \text{and}\ \ \ell_{R_\m}\left((0):_{X_\m}\m R_\m\right) \le 1.$$ Consequently, $\Ass_R X \subseteq \Max R$ and $\ell_R\left((0):_X \m \right) \le 1$ for all $\m \in \Max R$, so that $$\rmE_R(X) \cong \bigoplus_{\m \in \Max R}\rmE(R/\m)^{\oplus \mu (\m)},$$  where $\E_R(X)$ denotes the injective envelope of $X$, and $\mu (\m) \in \{0, 1\}$.
\end{proof}

The following is a direct consequence of Theorem \ref{1.3}.

\begin{cor}[{cf. \cite[Theorem 3.5]{LP}}]\label{cor2.12}
For a Noetherian ring $R$, the following conditions are equivalent.
\begin{enumerate}[{\rm (1)}]
\item Every ideal of $R$ is a trace ideal in $R$.
\item $R$ is a self-injective ring.
\end{enumerate}
\end{cor}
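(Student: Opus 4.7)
The plan is to apply Theorem~\ref{1.3} with $X=R$. Condition (1) of the corollary matches condition (1) of the theorem verbatim, so the task reduces to verifying that condition (3) of Theorem~\ref{1.3} with $X=R$, namely the existence of an embedding $0 \to R \to \bigoplus_{\m \in \Max R}\rmE_R(R/\m)$, is equivalent to $R$ being self-injective.

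For the implication ``(3) $\Rightarrow$ self-injective'', I would first observe that $\Ass_R\rmE_R(R/\m)=\{\m\}$, so the embedding forces $\Ass R \subseteq \Max R$; combined with the Noetherian hypothesis this makes $R$ Artinian, whence $R = \prod_{\m \in \Max R} R_\m$ is a finite product of Artinian local rings. Localizing the embedding at each $\m$ yields $R_\m \hookrightarrow \rmE_{R_\m}(R_\m/\m R_\m)$, so the socle of $R_\m$ embeds into the simple socle of the right-hand side. Since $R_\m$ is a nonzero Artinian local ring its socle is nonzero, and hence of length exactly $1$, making $R_\m$ a zero-dimensional Gorenstein local ring and thus self-injective. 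A finite product of self-injective rings is self-injective.

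For the converse, I would invoke the classical structure theorem that a commutative Noetherian self-injective ring decomposes as a finite product of zero-dimensional Gorenstein local rings. Writing $R = \prod_{\m \in \Max R} R_\m$ with each $R_\m$ a zero-dimensional Gorenstein local ring, one has $R_\m \cong \rmE_{R_\m}(R_\m/\m R_\m) \cong \rmE_R(R/\m)$, so $R \cong \bigoplus_{\m \in \Max R} \rmE_R(R/\m)$, which in particular supplies the embedding required for condition~(3).

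The main obstacle is the converse direction, where one must invoke (or reprove) the classical fact that a commutative Noetherian self-injective ring is a finite product of zero-dimensional Gorenstein local rings — this is not given by Theorem~\ref{1.3} itself and must be supplied from the general theory of Matlis decomposition. The forward direction, by contrast, is essentially a local-to-global argument that reads off the Gorenstein property of each $R_\m$ from the socle bound already built into the proof of Theorem~\ref{1.3}.
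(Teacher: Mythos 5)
Your proposal is correct and follows the paper's intended route: the paper offers no separate argument, stating the corollary as a direct consequence of Theorem~\ref{1.3} with $X=R$, and your verification that condition (3) of that theorem (the embedding $R \hookrightarrow \bigoplus_{\m}\rmE_R(R/\m)$, which forces $\dim R=0$ and simple socles locally) is equivalent to self-injectivity is exactly the missing bookkeeping. The only simplification worth noting is that the implication (2) $\Rightarrow$ (1) is already available from Proposition~\ref{2.3.2}~(3) (Lindo's observation that every ideal of a self-injective ring is a trace ideal), so the classical decomposition of a Noetherian self-injective ring into zero-dimensional Gorenstein local rings is not strictly needed.
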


For the implication (1) $\Rightarrow$ (2) in Corollary \ref{cor2.12}, we cannot remove the assumption that $R$ is a Noetherian ring. To explain more precisely about this phenomenon, let $R$ be a commutative ring. We say that $R$ is a {\em Von Neumann regular} ring, if for each $a \in R$, there exists an element $b \in R$ such that $a = aba$ (cf. \cite{Neumann}). Here, we need only the definition, but interested readers can find in \cite{Endo} a basic characterization of Von Neumann regular rings.

\begin{lemma}
Let $R$ be a Von Neumann regular ring. Then $\tau_R(I) = I$ for every ideal $I$ of $R$.

\end{lemma}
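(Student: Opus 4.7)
The plan is to verify condition (4) of Proposition \ref{2.3} with $X = R$ and $Y = I$, namely that $f(I) \subseteq I$ for every $f \in \Hom_R(I,R)$; the equivalence with condition (2) then immediately yields $\tau_R(I) = I$.

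The key observation is that Von Neumann regularity forces principal ideals to be generated by idempotents. Explicitly, given $x \in I$, choose $b \in R$ with $x = xbx$ and set $e = xb$. Then
\[
e^2 = (xb)(xb) = (xbx)b = xb = e, \qquad ex = xbx = x,
\]
so $e$ is an idempotent and $x \in eR$; in particular $xR = eR$.

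Next, for an arbitrary $f \in \Hom_R(I,R)$, I would use $R$-linearity together with $x = ex$ to compute
\[
f(x) = f(ex) = e\,f(x) = xb\,f(x) = x\cdot\bigl(b\,f(x)\bigr) \in xR \subseteq I.
\]
Thus $f(I) \subseteq I$, and Proposition \ref{2.3} gives $I = \tau_R(I)$.

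I do not anticipate any real obstacle: once the idempotent identity $ex = x$ is in hand, the verification of condition (4) of Proposition \ref{2.3} is a one-line calculation, and the lemma follows immediately. The only subtlety worth noting is the use of commutativity of $R$ implicit in treating $b \in R$ as an $R$-linear multiplier; this is available since the whole paper works in the commutative setting.
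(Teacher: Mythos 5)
Your proof is correct and is essentially the paper's own argument: the paper simply writes $\varphi(a)=\varphi(a\cdot ba)=a\varphi(ba)\in I$ for $a=aba$, which is the same one-line computation you perform (your idempotent $e=xb$ is a cosmetic detour around the identical factorization). The appeal to condition (4) of Proposition \ref{2.3} matches the paper's implicit use of that equivalence, so nothing further is needed.
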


\begin{proof}
Let $\varphi : I \to R$ be an $R$-linear map and $a \in I$. Then, $a = aba$ for some $b \in R$, so that $\varphi(a) = a\varphi(ba) \in I$. Thus, $\varphi(I) \subseteq I$.
\end{proof}

We have learned the following example from M. Hashimoto.

\begin{ex}\label{hashimoto}
Let $K$ be a commutative ring and assume that there exists an integer $p \ge 2$ such that $a^p = a$ for every $a \in K$. We consider the direct product $S=\prod_{i \in \Lambda}K_i$ of infinitely many copies $\{K_i =K\}_{i \in \Lambda}$ of $K$, and set $R = \Bbb Z{\cdot}1 + \bigoplus_{i \in \Lambda}K_i$ in $S$. Then, $R$ is a subring of $S$, and $R$ is Von Neumann regular, since $a^p=a$ for every $a \in S$.  We have  that $S$ is an essential extension of $R$, but $R \ne S$, because $\Lambda$ is infinite. Therefore, $R$ is not a self-injective ring.
\end{ex}

Let us note one more example. The following fact is known, when ${\rm ch} k=2$ and $\alpha_i = 1$ for every $i \in \Lambda$. Indeed, with the same notation as Example \ref{not self-injective}, if ${\rm ch} k = 2$ and $\alpha_i = 1$ for all $i \in \Lambda$, then $R = k[\{T_i\}_{i \in \Lambda}]/(T_i^2 - 1 \mid i \in \Lambda)$ where $T_i = X_i -1$ for each $i \in \Lambda$, so that $R=k[G]$, the group algebra of the direct sum $G=\bigoplus_{i \in \Lambda} C_i$ of infinitely many copies of the cyclic group $C_i=\Bbb Z/(2)$. Therefore, thanks to \cite[Theorem]{F}, $R$ is not self-injective. We have learned this result from K. Kurano, and we are grateful to him, since the method of proof given in \cite{F} works also in the setting of Example \ref{not self-injective}, as we will briefly confirm below.

\begin{ex}\label{not self-injective} Let $\Lambda =\{1, 2, 3, \ldots \}$ be the set of positive integers. Let $\{X_i\}_{i \in \Lambda}$ be a family of indeterminates and $\{\alpha_i\}_{i \in \Lambda}$  a family of positive integers. We set $S= k[\{X_i\}_{i \in \Lambda}]$ over a field $k$, $\fka = (X_i^{\alpha_i +1} \mid i \in \Lambda)$, and consider the ring $R = S/\fka$. Then, $R$ is not a self-injective ring, but $\tau_R(I) = I$ for every ideal $I$ of $R$.
\end{ex}

\begin{proof}
Let $x_i$ denote, for each $i \in \Lambda$, the image of $X_i$ in $R$. For each $n \in \Lambda$, we set $R_n =k[x_1, x_2, \ldots, x_n]$ in $R$. Then, $R = \bigcup_{n \in \Lambda}R_n$, and $$R_n = k[X_1, X_2, \ldots, X_n]/(X_1^{\alpha_1 + 1}, X_2^{\alpha_2 + 1}, \ldots, X_n^{\alpha_n + 1}),$$ so that $R_n$ is a self-injective ring for every $n \in \Lambda$. Let $a \in R$ and assume that $a \in R_n$. Then $$(0):_R[(0):_R a] \subseteq \bigcup_{\ell \ge n}\left\{(0):_{R_\ell}[(0):_{R_\ell} a]\right\},$$ whence  $(0):_R[(0):_Ra] = (a)$, because  $(0):_{R_\ell}[(0):_{R_\ell} a] =a{\cdot}R_\ell$ for all $\ell \ge n$ (here we use the fact that $R_\ell$ is a self-injective ring). Therefore, $\tau_R(I)= I$ for every ideal $I$ of $R$, because $\tau_R((a)) = (0):_R[(0):_R a]=(a)$ for each $a \in R$.

To see that $R$ is not self-injective,
we set for each $n \in \Lambda$
$$a_n= \begin{cases}
1, &  \ \text{if}\  n=1\\
1+ x_1+ x_1x_2 + x_1x_2x_3+ \ldots + x_1x_2\cdots x_{n-1}, & \ \text{if}\ n>1
\end{cases}
$$
and set $I_n =(x_1^{\alpha_1}, x_2^{\alpha_2}, \ldots, x_n^{\alpha_n})$. Then, $I_n \subseteq I_{n+1}$, and $I = \bigcup_{n \in \Lambda}I_n$, where $I = (x_i^{\alpha_i} \mid i \in \Lambda)$. We then have $a_{n+1}x = a_nx$ for every $x \in I_n$, which one can show by a simple use of induction on $n$, since $x_i^{\alpha_i + 1}= 0$ for all $i \in \Lambda$. Therefore, we may define the $R$-linear map $\varphi : I \to R$ so that $\varphi (x) = a_n x$ if  $x \in I_n$. We now assume that $R$ is a self-injective ring. Then, there must exist an element $a \in R$ such that $ax = \varphi(x)$ for every $x \in I$,  namely $ax = a_n x$ for every $x \in I_n$. Choose $n \in \Lambda$ so that $a \in R_n$. Then, because $(a-a_{n+2})x_{n+2}^{\alpha_{n+2}}= 0$, we get $a- a_{n+2} \in (0):_Rx_{n+2}^{\alpha_{n+2}}= (x_{n+2})$. Let $f \in k[X_1, X_2, \ldots, X_{n}]$ such that $a$ is the image of $f$ in $R$. Then  
$$f = 1+X_1+ X_1X_2+ \ldots +X_1X_2\cdots X_{n+1} + X_{n+2}g +h$$
for some $g \in S$ and $h \in \fka$. Substituting $X_i$ by $0$ for all $i \ge n+2$, we may assume that $g=0$ and $h \in (X_1^{\alpha_1 +1}, X_2^{\alpha_2 +1}, \ldots, X_{n+1}^{\alpha_{n+1} +1})T$, where $T = k[X_1, X_2, \ldots, X_{n+1}]$, that is
$$f = 1+X_1+ X_1X_2+ \ldots +X_1X_2\ldots X_{n+1} + \sum_{i=1}^{n+1}X_i^{\alpha_i + 1}h_i$$
with $h_i \in T$. This is, however, impossible, because $f \in k[X_1, X_2, \ldots, X_n]$ and the monomial $X_1 X_2 \cdots X_{n+1}$ is not involved in the polynomial $\sum_{i=1}^{n+1}X_i^{\alpha_i + 1}{h_i}$. Thus, $R$ is not a self-injective ring.
\end{proof}

It seems interesting, but hard, to ask for a complete characterization of (not necessarily Noetherian) commutative rings, in which every ideal is a trace ideal.



\section{Surjectivity of the correspondence $\rho$ in dimension one}
In this section, let $(R,\m)$ be a Cohen-Macaulay local ring of dimension one. We are interested in the question of when the correspondence $\rho : \calX_R \to \calY_R$ is  bijective. The second example in Example \ref{2.11} seems to suggest that $R$ is a Gorenstein ring, if $\dim R=1$ and $\rho$ is  bijective.  Unfortunately, this is still not the case, as we show in the following. Here, we say that a one-dimensional Cohen-Macaulay local ring $(R,\m)$ has maximal embedding dimension, if $\m^2=a\m$ for some $a \in \m$ (\cite{S}). We refer to \cite{GMP, GTT1} for the notion of almost Gorenstein local ring.

\begin{prop}[{cf. \cite[Example 4.7]{KT}}]\label{2.7.5}
Let $K/k$ be a finite extension of fields. Assume that $k \ne K$ and there is no intermediate field $F$ such that $k \subsetneq F \subsetneq K$. Let $B = K[[t]]$ be the formal power series ring over $K$ and set $R = k[[Kt]]$ in $B$. Set $n = [K:k]$. We then have the following.
\begin{enumerate}[{\rm (1)}]
\item $R$ is a Noetherian local ring with $B = \overline{R}$ and $\fkm=tB$, where $\m$ denotes the maximal ideal of $R$. Hence $B = \m : \m = R : \m$.
\item $R$ is an almost Gorenstein local ring, possessing maximal embedding dimension $n \ge 2$. 
\item $R$ is not a Gorenstein ring, if $n \ge 3$.
\item $\calX_R=\{\m,R \}$ and $\calY_R=\{B,R\}$, so that  $\rho:\calX_R \to \calY_R$ is a bijection.
\end{enumerate}
\end{prop}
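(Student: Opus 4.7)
The plan is to unfold the definition $R = k[[Kt]]$ into the concrete description $R = k + tB$ inside $B$, from which everything follows by elementary calculation in the DVR $B = K[[t]]$. With this identification, $\fkm := tB$ is a maximal ideal of $R$ with $R/\fkm \cong k$, so $R$ is local. A $k$-basis $\omega_1, \dots, \omega_n$ of $K$ generates $B/\fkm B = K$ over $k$, so by Nakayama $B = \sum_i R\omega_i$ is module-finite over $R$; Eakin--Nagata then shows $R$ is Noetherian, and since the DVR $B$ is integral over $R$ and integrally closed in $\rmQ(R) = K((t))$, we obtain $B = \overline{R}$ and $\dim R = 1$. The identity $\fkm : \fkm = B$ is automatic. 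For $R : \fkm$, any $x \in R : \fkm$ satisfies $xtB \subseteq R \subseteq B$, forcing $x \in t^{-1} B$; writing $x = a_{-1} t^{-1} + (\text{element of } B)$ and evaluating $xt \cdot b_0 \in R$ on $b_0 \in K \subseteq B$, the constant-term condition gives $a_{-1} K \subseteq k$, which forces $a_{-1} = 0$ since $K \ne k$. This proves (1).

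For (2) and (3), the identity $\fkm^2 = t^2 B = t\fkm$ gives maximal embedding dimension with $a = t$, while $\mu_R(\fkm) = \dim_k(tB/t^2 B) = \dim_k K = n \ge 2$. Splitting $R = k \oplus tB$ and $tR = tk \oplus t^2 B$ as $k$-vector spaces yields $\rme(R) = \ell_R(R/tR) = n$, and the same cofactor analysis that gave $R : \fkm = B$ also shows $tR :_R \fkm = \fkm$, so the Cohen--Macaulay type equals $\rmr(R) = \ell_R(\fkm/tR) = n - 1$. Hence $R$ is Gorenstein exactly when $n = 2$, proving (3). The almost Gorenstein property then follows from the known fact that any one-dimensional Cohen--Macaulay local ring of maximal embedding dimension is almost Gorenstein (cf.\ \cite{GMP, GTT1}).

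For (4), the containment $\fkm \cdot B = tB \cdot B = tB = \fkm \subseteq R$ together with $R \ne B$ invokes Proposition \ref{3a} to give $\calX_R = \{\fkm, R\}$. For $\calY_R$, any $A \in \calY_R$ lies in $R \subseteq A \subseteq \overline{R} = B$; since $\fkm = tB \subseteq R \subseteq A$, the quotient $A/tB$ is a subring of $B/tB = K$ containing $k$, and every such subring is a subfield because $K/k$ is finite algebraic. The no-proper-intermediate-field hypothesis forces $A/tB \in \{k, K\}$, so $A \in \{R, B\}$. Finally $\rho(R) = R$ and $\rho(\fkm) = \fkm : \fkm = B$ yield the claimed bijection. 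The only step where I expect to need an external citation is the almost Gorenstein assertion; every other step is a direct manipulation in the DVR $B$ driven by the structural identity $R = k + tB$ and the no-intermediate-field hypothesis on $K/k$.
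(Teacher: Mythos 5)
Most of what you do is correct and is essentially the paper's own route: the identification $R=k+tB$, the computations $\m=tB$, $B=\overline{R}=\m:\m=R:\m$, $\mu_R(\m)=\rme(R)=n$ and $\rmr(R)=n-1$ (your explicit type computation is even more detailed than the paper's, and yields the sharper "Gorenstein iff $n=2$"), the use of Proposition \ref{3a} for $\calX_R=\{\m,R\}$, and the reduction of $\calY_R$ to subrings of $B/tB=K$ containing $k$, which the no-intermediate-field hypothesis pins down to $\{k,K\}$. (A tiny quibble: invoking Nakayama to get $B=\sum_i R\omega_i$ is circular, since finite generation is the point; but it is also unnecessary, because $B=K+tB\subseteq \sum_i k\omega_i+R$ is visibly a finite $R$-module, and then Eakin--Nagata applies as you say.)

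The genuine gap is your justification of the almost Gorenstein assertion in (2). The "known fact" you appeal to --- that every one-dimensional Cohen--Macaulay local ring of maximal embedding dimension is almost Gorenstein --- is false. For instance, $R'=k[[t^4,t^7,t^9,t^{10}]]$ satisfies $\m'^2=t^4\m'$ and $\mu_{R'}(\m')=\rme(R')=4$, but the numerical semigroup $H=\langle 4,7,9,10\rangle$ is not almost symmetric: its pseudo-Frobenius numbers are $3,5,6$ and $3+5\ne 6$ (equivalently, $H$ has $5$ gaps, Frobenius number $6$ and type $3$, so $2g\ne f+t$); hence $R'$ is not almost Gorenstein in the sense of \cite{GMP}. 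What makes the ring $R$ of the proposition almost Gorenstein is not maximal embedding dimension per se, but the finer structure you have already established: $\overline{R}=\m:\m$ is a DVR and $\m\overline{R}=\m\subseteq R$. This is exactly the situation the paper's citation \cite[Corollary 3.12]{GMP} covers; equivalently, since $R$ is complete one may choose a canonical ideal $K$ with $R\subseteq K\subseteq\overline{R}$, and then $\m K\subseteq \m\overline{R}=\m\subseteq R$, which is the characterization of one-dimensional almost Gorenstein rings in \cite{GMP}. Replace your appeal to the false general principle by this argument (or by the citation the paper uses), and your proof is complete.
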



\begin{proof} Let $\omega_1=1, \omega_2, \ldots, \omega_n$ be a $k$-basis of $K$. Then $R = k[[\omega_1t, \omega_2t, \ldots, \omega_nt]]$, whence $R$ is a Noetherian complete local ring. Since $B/\m B\cong K$, $B = \sum_{i=1}^nR\omega_i$, so that $tB = \m$. Hence, $\m$ is also an ideal of $B$, $\m = \m B=tB$, and  $\m^2 = t\m$. Because $B$ is a module-finite extension of $R$ and $\omega_i = \frac{\omega_it}{\omega_1t} \in \rmQ(R)$ for all $1 \le i \le n$, we have $B = \overline{R}$. Therefore, $R$ is an almost Gorenstein ring by \cite[Corollary 3.12]{GMP}, possessing maximal embedding dimension $\rme(R) = n$. Consequently, $R$ is not a Gorenstein ring, if $n \ge 3$. We get $\calX_R = \{\m, R\}$ by Proposition \ref{3a}, because $R \ne B$ but $\m B \subseteq R$. The assertion that $\calY_R = \{B,R\}$ is due to \cite[Example 4.7]{KT}. Let us note a brief proof for the sake of completeness. Let $A \in \calY_R$ and let $\n$ denote the maximal ideal of $A$. We then have $\n = \m$, because $\n = \m_B \cap A= \m \cap A = \m$. Consequently, we have an extension $k=R/\m \subseteq A/\m \subseteq K = B/\m$ of fields, so that $R/\m = A/\m$, or $A/\m = B/\m$ by the choice of the extension $K/k$. Hence, $R = A$ or $A = B$, and thus $\calY_R = \{R, B\}$. Therefore, because $\m : \m = tB:tB=B$ and $R:R=R$, the correspondence $\rho:\calX_R \to \calY_R$ is a bijection.
\end{proof}


In what follows, we intensively explore the question of when the correspondence $\rho : \calX_R \to \calY_R$ is  bijective. The goal is the following, which essentially shows that except the case of Proposition \ref{2.7.5}, the surjectivity of $\rho$ implies the Gorenstein property of the ring $R$.

\begin{thm}\label{1.2}
Let $(R, \fkm)$ be a Cohen-Macaulay local ring of dimension one. We set $B=\fkm:\fkm$ and let $\rmJ(B)$ denote the Jacobson radical of $B$. Then the following assertions are equivalent.
\begin{enumerate}[{\rm (1)}]
\item $\rho : \calX_R \to \calY_R$ is  bijective.
\item $\rho : \calX_R \to \calY_R$  is surjective.
\item Either $R$ is a Gorenstein ring, or the following two conditions are satisfied.
\begin{enumerate}[{\rm (i)}]
\item $B$ is a $\operatorname{DVR}$ and $\rmJ(B)=\fkm$.
\item There is no proper intermediate field between $R/\fkm$ and $B/\rmJ(B)$.
\end{enumerate} 
\end{enumerate}
When this is the case, $R$ is an almost Gorenstein local ring.
\end{thm}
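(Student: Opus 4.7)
My plan is to close the cyclic chain of implications $(1) \Rightarrow (2) \Rightarrow (3) \Rightarrow (1)$; the first implication is immediate, and the almost-Gorenstein conclusion will fall out of (3) by direct inspection.

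For $(3) \Rightarrow (1)$, I split into cases. If $R$ is Gorenstein, Corollary \ref{3.8a} directly gives the bijectivity of $\rho$. In the non-Gorenstein subcase, conditions (i) and (ii) hold; since $B$ is a DVR module-finite over $R$, $B$ coincides with $\overline{R}$, and $\fkm = \rmJ(B)$ gives $\fkm \overline{R} = \fkm B = \fkm \subseteq R$, so Proposition \ref{3a} yields $\calX_R = \{R, \fkm\}$. For $\calY_R$, I take any $A \in \calY_R$; since $R \subseteq A \subseteq \overline{R} = B$ and $\fkm$ is a common ideal, the residue $A/\fkm$ is a field inside the tower $R/\fkm \subseteq A/\fkm \subseteq B/\rmJ(B)$. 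Condition (ii) forces $A/\fkm \in \{R/\fkm, B/\rmJ(B)\}$, and Nakayama then gives $A \in \{R, B\}$ exactly as in the proof of Proposition \ref{2.7.5}. Thus $\calY_R = \{R, B\}$, and $\rho$ is the bijection $R \mapsto R,\ \fkm \mapsto B$.

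The main direction is $(2) \Rightarrow (3)$. Suppose $\rho$ is surjective and, to avoid the trivial case, $R$ is not Gorenstein; then $R$ is not a DVR, so $B \supsetneq R$, $R:B = \fkm$, and $R:\fkm = B$. By Proposition \ref{3.9}, surjectivity translates into $A = R:(R:A)$ for every $A \in \calY_R$. Step~1: no $A \in \calY_R$ lies strictly between $R$ and $B$; indeed, if $R \subsetneq A \subseteq B$ then $A \subseteq B = R:\fkm$ gives $\fkm \subseteq R:A$, while $A \ne R$ forces $R:A \subsetneq R$, so $R:A = \fkm$ and $A = R:(R:A) = R:\fkm = B$. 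Step~2, the crux: no $A \in \calY_R$ strictly contains $B$. Assuming one exists, set $I = R:A$; using (2) together with the $A$-stability of $I$, one verifies $I \in \calX_R$ with $I \subsetneq \fkm$ and $I:I = R:I = A$. The main obstacle I anticipate is to extract a contradiction from such a trace ideal strictly inside $\fkm$. I would argue: picking $x \in A \setminus B$, one has $xI \subseteq I \subseteq \fkm$ by $A$-stability while $x\fkm \not\subseteq \fkm$ (since $x \notin \fkm:\fkm = B$); combining the trace-ideal equalities $\fkm = B\fkm$ and $I = AI$ with the reflexivity $R:(R:I) = I$ coming from (2), this should force $I = \fkm$, contradicting $I \subsetneq \fkm$.

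Once Steps 1 and 2 are established, $\calY_R = \{R, B\}$ (any $A \in \calY_R$ not comparable with $B$ would give $AB \supsetneq B$ in $\calY_R$, violating Step 2). Every element integral over $R$ generates a birational extension lying in $\calY_R$, hence belongs to $B$, so $B = \overline{R}$; and any nontrivial idempotent of $B$ would give $R[e] \in \calY_R$ strictly between $R$ and $B$, contradicting Step 1, so $B$ is connected, one-dimensional local and normal, i.e., a DVR. Condition (i) is completed by observing that $\rmJ(B) \supsetneq \fkm$ would make $R + \rmJ(B)$ a proper intermediate extension, contradicting $\calY_R = \{R, B\}$. Condition (ii) follows in the same way: a proper intermediate field between $R/\fkm$ and $B/\rmJ(B)$ would pull back under $B \twoheadrightarrow B/\rmJ(B)$ to a proper intermediate birational extension. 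Finally, the almost-Gorenstein conclusion is automatic from (3): trivial when $R$ is Gorenstein, and in the other case Proposition \ref{2.7.5} asserts that $R$ is almost Gorenstein.
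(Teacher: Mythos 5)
Your implication $(3)\Rightarrow(1)$ is essentially correct and in fact more self-contained than the paper's citation chain: from (i) you get $B=\overline{R}$ and $\fkm B=\fkm\subseteq R$, so Proposition \ref{3a} gives $\calX_R=\{\fkm,R\}$, and (ii) pins down $\calY_R=\{R,B\}$ as in Proposition \ref{2.7.5}. Your Step~1 of $(2)\Rightarrow(3)$ is also fine and is exactly the paper's Proposition \ref{2.7.6}. The problem is Step~2, which you yourself flag as the crux and then do not prove. From $A\supsetneq B$ you correctly get $I=R:A\in\calX_R$ with $I\subsetneq\fkm$ and $I:I=A$, but the observations you list ($xI\subseteq I$ while $x\fkm\not\subseteq\fkm$ for $x\in A\setminus B$) only reconfirm $I\neq\fkm$; nothing in your sketch ``forces $I=\fkm$.'' Indeed there is no short reflexivity argument here: the paper never proves Step~2 directly. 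Instead, Theorem \ref{2.8} extracts everything from Step~1 alone by a genuinely structural argument: for $x\in B\setminus R$ one has $B=R[x]=R[x^2]$, $\mu_R(B)=\rmr(R)+1\ge 3$, and writing $x=\sum c_ix^{2i}$ one deduces $x\notin\rmJ(B)$, hence $\rmJ(B)=\fkm$, hence $B_M$ is a DVR for every $M$, hence $B$ is integrally closed and $B=\overline{R}$ --- after which ``no $A\supsetneq B$'' is automatic. Your proposal contains no substitute for this computation, and this is a genuine gap, not a presentational one.

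Several of your subsequent steps inherit the damage or have independent holes. ``$B$ connected, one-dimensional, semi-local and normal, hence local, hence a DVR'' is false as stated (a semi-local Dedekind domain with two maximal ideals is connected and normal but not a DVR); the paper needs the passage to the $\fkm$-adic completion to split $B$ into a product of DVRs and then uses $\mu_R(B)\ge 3$ to show there is only one factor --- you address neither completeness nor the count $\mu_R(B)=\rmr(R)+1$. Likewise, $R+\rmJ(B)$ need not be a \emph{proper} intermediate ring when $\rmJ(B)\supsetneq\fkm$: it can equal $B$, and excluding that again requires $\rmr(R)\ge 2$, which you never establish. Finally, for the almost-Gorenstein addendum you cite Proposition \ref{2.7.5}, but that proposition concerns only the specific rings $k[[Kt]]$; the correct reference in the general case is that $B=\fkm:\fkm$ being a DVR implies $R$ is almost Gorenstein by \cite[Theorem 5.1]{GMP}, which is what the paper invokes.
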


We set $B=\fkm:\fkm$. Let $\rmJ(B)$ be the Jacobson radical of $B$. To prove Theorem \ref{1.2}, we need some preliminaries. Let us begin with the following.

\begin{lemma}\label{2.7.4} Suppose that $R$ is not a $\operatorname{DVR}$. Then $R \ne B$ and $\ell_{R} \left(B/R\right) = \rmr(R)$.
\end{lemma}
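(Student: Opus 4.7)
The plan is to reduce the claim to the identity $B = R:\fkm$, after which the standard length formula for $\rmr(R)$ in dimension one finishes the job.

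First I would show that under the hypothesis $R:\fkm = \fkm:\fkm$. The inclusion $\fkm:\fkm \subseteq R:\fkm$ is automatic. For the reverse, let $x \in R:\fkm$, so that $x\fkm$ is an ideal of $R$. If $x\fkm \not\subseteq \fkm$, then by locality $x\fkm = R$, whence $\fkm$ is invertible in $\rmQ(R)$. An invertible ideal of a local ring is principal (a rank-one projective module over a local ring is free, or equivalently, apply Nakayama to $\fkm/\fkm^2$ after localizing the isomorphism), and because $\fkm$ contains a non-zerodivisor ($R$ being Cohen--Macaulay of dimension one), its generator may be taken to be a non-zerodivisor, so $R$ is a DVR -- contradicting the hypothesis. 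Therefore $x\fkm \subseteq \fkm$, that is, $x \in B$.

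Next I would fix a non-zerodivisor $a \in \fkm$. Multiplication by $a^{-1}$ in $\rmQ(R)$ induces an $R$-linear isomorphism
$$(aR:_R\fkm)/aR \;\xrightarrow{\sim}\; (R:\fkm)/R,$$
and the standard identification arising from $0 \to R \xrightarrow{\;a\;} R \to R/aR \to 0$ gives $\Ext_R^1(R/\fkm,R) \cong (aR:_R\fkm)/aR$. Combined with the definition $\rmr(R) = \ell_R(\Ext_R^1(R/\fkm,R))$, this yields
$$\ell_R(B/R) \;=\; \ell_R\bigl((R:\fkm)/R\bigr) \;=\; \rmr(R).$$
Since $\rmr(R) \ge 1$, this also shows $R \ne B$.

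I do not foresee any serious obstacle; the substance of the argument is the elementary observation that the failure of $R$ to be a DVR is precisely what prevents elements of $R:\fkm$ from escaping the multiplier ring $B$ of $\fkm$.
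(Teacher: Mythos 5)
Your proof is correct and follows essentially the same route as the paper: the paper also reduces the statement to the identity $R:\fkm=\fkm:\fkm$ (forced by $R$ not being a DVR) together with the standard fact $\ell_R\bigl((R:\fkm)/R\bigr)=\rmr(R)$, which it leaves as ``clear'' and you verify via $\Ext_R^1(R/\fkm,R)\cong (aR:_R\fkm)/aR$. You have simply supplied the details the paper omits.
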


\begin{proof}
We have $R:\m =\m :\m$, since $R$ is not a DVR. The second assertion is clear, since $\ell_R\left((R:\m)/R\right) = \rmr(R)$. 
\end{proof}

\begin{prop}\label{2.7.6}
Suppose that $R \ne B$ and that $\rho:\calX_R \to \calY_R$ is  surjective. Then there is no proper intermediate ring between $R$ and $B$. 
\end{prop}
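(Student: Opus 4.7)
The plan is to argue by contradiction using Proposition \ref{3.9}, which says surjectivity of $\rho$ is equivalent to the identity $A = R:(R:A)$ holding for every $A \in \calY_R$. So I will suppose there is an intermediate ring $R \subsetneq A \subsetneq B$, show $A \in \calY_R$, and then compute $R:(R:A)$ and show it equals $B$, contradicting $A \subsetneq B$.

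First I would record that since $R \neq B$, Lemma \ref{2.7.4} tells us $R$ is not a $\operatorname{DVR}$, so that $R:\fkm = \fkm:\fkm = B$. Next, given a ring $A$ with $R \subsetneq A \subsetneq B$, I would verify $A \in \calY_R$ by choosing any non-zerodivisor $a \in \fkm$ (which exists since $R$ is Cohen-Macaulay of dimension one): because $A \subseteq B = \fkm:\fkm$, we have $\fkm A \subseteq \fkm$, so in particular $a A \subseteq \fkm \subseteq R$.

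The key computation is then to identify $R:A$. From $A \supseteq R$ we get $R:A \subseteq R:R = R$, so $R:A$ is an ideal of $R$. The containment $\fkm A \subseteq \fkm \subseteq R$ observed above gives $\fkm \subseteq R:A$. On the other hand, if $R:A$ were all of $R$, then $1 \in R:A$ would force $A = 1 \cdot A \subseteq R$, contradicting $R \subsetneq A$. Hence $R:A = \fkm$. Applying Proposition \ref{3.9}(3) and Step 1 gives
\[
A \;=\; R:(R:A) \;=\; R:\fkm \;=\; B,
\]
contradicting $A \subsetneq B$. This completes the proof.

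I do not anticipate a serious obstacle; the only point one must be careful about is checking $A \in \calY_R$ (so that Proposition \ref{3.9} applies) and showing $R:A$ is squeezed between $\fkm$ and $R$, where both containments use in an essential way that $A$ sits between $R$ and $B = \fkm:\fkm$.
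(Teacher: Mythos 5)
Your proof is correct and follows essentially the paper's own route: the paper takes a preimage $I \in \calX_R$ with $A = \rho(I) = R:I$, notes $I \subseteq \m$ because $A \ne R$, and concludes $A \supseteq R:\m \supseteq B$, which is the same dualization you carry out via Proposition \ref{3.9}(3) after identifying $R:A = \m$. The only imprecision is the appeal to Lemma \ref{2.7.4}, which asserts that if $R$ is not a DVR then $R \ne B$, not the converse direction you invoke; what you actually need is the trivial remark that a DVR satisfies $\m:\m = R$, together with the equality $R:\m = \m:\m$ ($=B$) for a non-DVR, which is established inside the proof of that lemma.
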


\begin{proof}
We have $\fkm, R \in \calX_R$ and $B, R \in \calY_R$. Let $A$ be an extension of $R$ such that  $R\subsetneq A \subseteq B$. We write $A = \rho(I)=R:I$ with $I \in \calX_R$. Then $I \subseteq \m$, since $A \ne R$. Therefore, $A= R:I \supseteq R:\m = B$, so that $A = B$. 
\end{proof}

The following is the heart of our argument.

\begin{thm}\label{2.8}
Let $(R, \fkm)$ be a non-Gorenstein Cohen-Macaulay local ring of dimension one. Assume that $R$ is $\m$-adically complete and there is no proper intermediate ring between $R$ and $B$. Then the following assertions hold true.
\begin{enumerate}[{\rm (1)}]
\item $B=\overline{R}$, and $B$ is a $\operatorname{DVR}$ with $\rmJ(B)=\fkm$.
\item $[B/\m:R/\fkm]=\rmr(R)+1\ge 3$. 
\item There is no proper intermediate field between $R/\fkm$ and $B/\m$.
\item $\calX_R=\{\m, R\}$ and the correspondence $\rho$ is bijective.
\end{enumerate}
\end{thm}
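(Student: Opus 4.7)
The plan is to combine an analysis of the minimal ring extension structure with a second application of Lemma \ref{2.7.4}, this time to $B$ itself. Since $R$ is not a DVR (being non-Gorenstein and one-dimensional CM), Lemma \ref{2.7.4} gives $R \subsetneq B$ and $\ell_R(B/R) = \rmr(R) \ge 2$. One checks $R:B = \m$: the inclusion $\m B \subseteq \m$ (by the definition of $B$) gives $\m \subseteq R:B$, while $B \not\subseteq R$ forces $R:B$ to be a proper ideal of $R$, hence $R:B \subseteq \m$. Since $\m$ is a common ideal of $R$ and $B$, intermediate rings between $R$ and $B$ correspond bijectively to $k$-subalgebras of $B/\m$ (where $k := R/\m$). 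By hypothesis, $B/\m$ has no proper $k$-subalgebra strictly containing $k$; a short case analysis on whether $B/\m$ is a domain, reduced-not-a-domain, or has nilpotents forces $B/\m$ to be one of: a minimal field extension of $k$, $k\times k$, or $k[X]/(X^2)$. The short exact sequence $0 \to R/\m \to B/\m \to B/R \to 0$ (exact because $\m$ is an ideal of $B$) gives $\dim_k(B/\m) = 1+\rmr(R) \ge 3$, ruling out the last two cases. So $B/\m$ is a field $K$ of degree $\rmr(R)+1$ over $k$, with no proper intermediate subfield; this already yields (2) and (3). Since $\m$ is now maximal in $B$ and every maximal ideal of $B$ contracts to $\m$ in $R$, we also conclude $\rmJ(B) = \m$ and $B$ is local.

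Next, for $B$ being a DVR and $B = \overline R$: observe that $B$ is itself a one-dimensional Cohen-Macaulay local ring ($B$ is module-finite over $R$, $R$-torsion-free as a subring of $\rmQ(R)$, and $\depth B \ge 1 = \dim B$ since $\m$ contains non-zerodivisors). By the very definition of $B$,
$\m_B:\m_B = \m:\m = B$
as fractional ideals in $\rmQ(B) = \rmQ(R)$. Applying Lemma \ref{2.7.4} now to $B$: if $B$ were not a DVR, then $\m_B:\m_B \ne B$, contradicting the displayed equality. Hence $B$ is a DVR. Since $B$ is then integrally closed in $\rmQ(R)$ and $R \subseteq B \subseteq \overline R$, we get $\overline R \subseteq \overline B = B$, so $B = \overline R$, completing (1).

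For (4): Proposition \ref{3a} applies with $V = \overline R = B$, since $R \ne V$ and $\m V = \m B = \m \subseteq R$, giving $\calX_R = \{\m, R\}$. For any $A \in \calY_R$, $A$ is a finite birational extension of $R$ sitting inside $\overline R = B$, so $R \subseteq A \subseteq B$; the no-intermediate-ring hypothesis forces $A \in \{R, B\}$, hence $\calY_R = \{R, B\}$. The correspondence $\rho$ sends $R \mapsto R:R = R$ and $\m \mapsto \m:\m = B$, giving a bijection.

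The main obstacle I anticipate is spotting the second application of Lemma \ref{2.7.4}: one has to recognize that the fractional-ideal computation $\m:\m = B$ used to define $B$, when reread as $\m_B:\m_B$ inside $B$, is exactly the condition Lemma \ref{2.7.4} translates into the DVR property. Once this is observed, the DVR property of $B$ and the equality $B = \overline R$ drop out together, avoiding any iterative Lipman-style blow-up argument.
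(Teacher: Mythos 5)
Your proof is correct, but it reaches the conclusion by a route that differs from the paper's in two substantive ways. First, where you deduce that $B/\m$ is a field from the classification of minimal ring extensions of $k$ (the only ones are minimal field extensions, $k\times k$, and $k[X]/(X^2)$, the latter two being excluded because $\dim_k(B/\m)=\rmr(R)+1\ge 3$), the paper instead works with an explicit element: it picks $x\in B\setminus R$, notes $B=R[x]=R[x^2]$ because $\mu_R(B)=\rmr(R)+1\ge 3$, and uses the identity $x(1-ax)=c_0$ to show no element of $B\setminus R$ lies in $\rmJ(B)$, whence $\rmJ(B)=\m$. Both arguments consume the non-Gorenstein hypothesis at exactly the same point (the bound $\ge 3$); your version trades the element computation for the Ferrand--Olivier-type case analysis, which you state only in outline but which is elementary to check. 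Second, and more significantly, the paper only obtains that each localization $B_M$ is a DVR and then invokes the $\m$-adic completeness of $R$ to split $B$ into a finite product of DVRs, ruling out more than one factor via $\mu_R(B)>2$; you instead get locality of $B$ for free from the maximality of $\m$ in $B$ plus lying over, and then apply Lemma \ref{2.7.4} once to the local ring $(B,\m)$ itself via $\m_B:\m_B=\m:\m=B$ --- which is the same key trick the paper uses, just without the localization and product decomposition. As a result your argument never uses the completeness hypothesis, which is a genuine (if modest) strengthening; completeness is then only needed in the reduction step of Theorem \ref{1.2}. For assertion (4) you cite Proposition \ref{3a} where the paper repeats the argument inline, but that is the same proof. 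The one place to tighten if this were written out in full is the ``short case analysis'': you should record why a finite-dimensional $k$-algebra with no proper intermediate subalgebra is generated by any single element outside $k$, and then run the three cases (field, reduced non-domain, non-reduced) explicitly.
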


\begin{proof}
We have $\m B = \m$, and $R \ne B$, since $R$ is not a DVR (Lemma \ref {2.7.4}). Let $x \in B \setminus R$. Then $B=R[x]$  and  $B/\fkm=k[\overline{x}]$, where $k = R/\m$ and $\overline{x}$ denotes the image of $x$ in $B/\fkm$. Let $n~(>0)$ be the degree of the minimal polynomial of $\overline{x}$ over $k$. We then have $$B=R+ Rx + Rx^2+ \cdots +Rx^{n-1}$$ and $n = \mu_R(B)$, so that $n-1=\rmr(R)$ by Lemma \ref{2.7.4}. Therefore, $n \ge 3$ since $R$ is not a Gorenstein ring, so that $x^2 \not\in R$ since the elements $1, x, \ldots, x^{n-1}$ form a minimal system of generators of the $R$-module $B$. Hence $$B=R[x^2]=R+ Rx^2 + Rx^4+ \cdots +Rx^{2(n-1)}.$$ Let us write $x=\sum_{i=0}^{n-1} c_i x^{2i}$ with $c_i \in R$. We then have $x(1-ax)=c_0$, where $a=\sum_{i=1}^{n-1} c_i x^{2i-2}$.
We will show that $x \not\in \rmJ(B)$. If $c_0 \not\in \fkm$, then $x$ is a unit of $B$, whence $x \not\in \rmJ(B)$. Assume that $c_0 \in \fkm$. Then, if $x \in \rmJ(B)$, $1-ax$ is a unit of $B$, so that $x=(1-ax)^{-1} c_0 \in \fkm B=\fkm$, which is a contradiction. Therefore, $x \not\in \rmJ(B)$ for all $x \in B \setminus R$, which shows $\rmJ(B) \subseteq R$, whence $\rmJ(B) = \fkm$. Therefore, we have $B=\fkm:\fkm=\rmJ(B):\rmJ(B)$. Hence, $B_M=MB_M:MB_M$ for all $M \in \Max B$, which implies  the local ring $B_M$ is a DVR (see Lemma \ref{2.7.4}). Therefore, because $B$ is integrally closed in $\rmQ(B) = \rmQ(R)$, we get $B=\ol{B}=\ol{R}$.

Since $R$ is $\m$-adically complete, we have a decomposition 
$$
B = B_1\times B_2 \times \cdots \times B_{\ell}
$$
of $B$ into a finite product of DVR's $\{B_j\}_{1 \le j \le \ell}$. We want to show that $\ell = 1$. Let $\mathbf{e}_j=(0, \dots, 0, 1_{B_j}, 0, \dots, 0)$ in $B$ and set $\mathbf{e}=\sum_{j=1}^{\ell} \mathbf{e}_j$.  Assume now that $\ell \ge 2$. We then have $B=R[\mathbf{e}_1]$, since $\mathbf{e}_1 \not\in R$ and since there is no proper intermediate ring between $R$ and $B$. Hence $B=R \mathbf{e} + R\mathbf{e}_1$, since $\mathbf{e}_1^2 =\mathbf{e}_1$. This is however impossible, because $$\mu_R(B) = \ell_{R} (B/\fkm B)=1+\rmr(R)>2.$$ Thus, $\ell=1$,  that is $B=\ol{R}$ is  a DVR with the maximal ideal $\rmJ(B)=\fkm$. It remains the proof of Assertions (3) and (4). Assume that there is contained a field $F$ such that $R/\fkm \subseteq F \subseteq B/\m$.  We consider the natural epimorphism $\varepsilon:B \to B/\m$ of rings. Then, since $\varepsilon^{-1}(F)$ is an intermediate ring between $R$ and $B$, either $\varepsilon^{-1}(F)=R$, or $\varepsilon^{-1}(F)=B$, which shows either $F=R/\fkm$, or $F=B/\m$.

Let $I \in \calX_R$ and assume that $I \ne R$. Then, since $I \subseteq \m$, we have 
$$B = \m :\m = R:\m \subseteq R:I = I:I \subseteq \overline{R}=B,$$ whence $I:I= B$, so that $I$ is an ideal of $B$. Let us write $I = aB$ with $0 \ne a \in B$. We then have 
$$B = R:I = R: aB = a^{-1}(R:B) = a^{-1}\m,$$ since $\m =R:B$, so that $\m = aB=I$. Thus, $\calX_R = \{\m, R\}$, which shows  the correspondence $\rho$ is bijective. This completes the proof of Theorem \ref{2.8}.
\end{proof}

We are now ready to prove Theorem \ref{1.2}.

\begin{proof}[Proof of Theorem $\rm \ref{1.2}$]
(1) $\Rightarrow$ (2) This is clear.

$(3) \Rightarrow (1)$ See  Lemma \ref{3.6}, Proposition \ref{2.7.6}, and Theorem \ref{2.8} (4).

$(2) \Rightarrow (3)$ We may assume that $R$ is not a Gorenstein ring. Passing to the $\m$-adic completion $\widehat{R}$ of $R$, without loss of generality we may also assume that $R$ is $\m$-adically complete. Then by Proposition \ref{2.7.6}, there is no proper intermediate ring between $R$ and $B$, so that the assertion follows from Theorem \ref{2.8}.

If $\rho$ is bijective but $R$ is not a Gorenstein ring, we then have $B = \m : \m$ is a DVR, so that  $R$ is an almost Gorenstein ring by  \cite[Theorem 5.1]{GMP}. 
\end{proof}

We note the following, which is a direct consequence of Theorem \ref{1.2}.

\begin{cor}\label{cor2.9}
Let $(R, \fkm)$ be a Cohen-Macaulay local ring of dimension one. Suppose that one of the following conditions is satisfied. 
\begin{enumerate}[{\rm (i)}]
\item The field $R/\fkm$ is algebraically closed.
\item $\ol{R}$ is a local ring, and $R/\m \cong \ol{R}/\n$, where $\n$ denotes the maximal ideal of $\ol{R}$.
\end{enumerate}
Then the following assertions are equivalent.
\begin{enumerate}[{\rm (1)}]
\item $R$ is a Gorenstein ring.
\item The correspondence $\rho : \calX_R \to \calY_R$ is bijective.
\item The correspondence $\rho : \calX_R \to \calY_R$ is surjective.
\end{enumerate}
\end{cor}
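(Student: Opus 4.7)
The plan is to deduce this corollary from Theorem \ref{1.2} by ruling out, under either residue-field hypothesis, the ``exceptional'' alternative of that theorem. The implication $(1) \Rightarrow (2)$ is just Corollary \ref{3.8a}, and $(2) \Rightarrow (3)$ is immediate, so the whole content lies in $(3) \Rightarrow (1)$.

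Assume $\rho$ is surjective. By Theorem \ref{1.2}, either $R$ is Gorenstein---as desired---or $B = \fkm : \fkm$ is a DVR with $\rmJ(B) = \fkm$ and no proper intermediate field sits between $R/\fkm$ and $B/\rmJ(B)$. I would derive a contradiction in this second case. The pivotal observation is that here $B = \overline{R}$: the ring $B$ is a DVR and therefore integrally closed in $\rmQ(R) = \rmQ(B)$, while every element of $\overline{R}$ lies in $\rmQ(R)$ and is integral over $R \subseteq B$, giving $\overline{R} \subseteq B$; the reverse inclusion is automatic because $B$ is module-finite over $R$.

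Under hypothesis (i), $B/\fkm = B/\rmJ(B)$ is a finite field extension of the algebraically closed field $R/\fkm$, so $B/\fkm = R/\fkm$. Under hypothesis (ii), $\overline{R}$ is local with maximal ideal $\n$, and the equalities $\n = \rmJ(\overline{R}) = \rmJ(B) = \fkm$ together with the canonical isomorphism $R/\fkm \cong \overline{R}/\n$ again yield $B/\fkm = R/\fkm$. In either case the natural map $R/\fkm \to B/\fkm B = B/\fkm$ is an isomorphism, so Nakayama's lemma applied to the finitely generated $R$-module $B$ (using $\fkm B = \fkm$) forces $B = R$. But $R$ is not Gorenstein by assumption, hence not a DVR, and Lemma \ref{2.7.4} gives $R \ne B$---contradiction.

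The main obstacle is the identification $B = \overline{R}$ in the exceptional case; once this is in hand, each of the residue-field hypotheses collapses the extension $R/\fkm \subseteq B/\fkm$ to an equality, and the Nakayama step finishes the argument cleanly.
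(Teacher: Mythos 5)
Your proof is correct and follows the route the paper intends (the paper itself only says the corollary is ``a direct consequence of Theorem \ref{1.2}''): deduce $(1)\Rightarrow(2)$ from Corollary \ref{3.8a}, and for $(3)\Rightarrow(1)$ use Theorem \ref{1.2} and rule out the exceptional alternative, since under either residue-field hypothesis $B=\fkm:\fkm$ would be a module-finite extension with $\rmJ(B)=\fkm$, trivial residue extension, hence $B=R$, contradicting $R\ne B$ for a non-Gorenstein (hence non-DVR) ring. Your added details --- identifying $B=\overline{R}$ directly from $B$ being a DVR, and the Nakayama step --- are exactly the bookkeeping the paper leaves implicit.
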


When $R$ is a numerical semigroup ring over a field, Condition (ii) of Corollary \ref{cor2.9} is always satisfied.


\section{Anti-stable rings}\label{add}
Let $R$ be a commutative ring and let $\calF_R$ denote the set of regular ideals of $R$. Then, because $(R:I){\cdot}(I:I) \subseteq R:I$, for every $I \in \calF_R$ the $R$-module $R:I$ has also the structure of an $(I:I)$-module. Keeping this fact together with the natural identifications $R:I = \Hom_R(I,R)$ and $I:I = \End_RI$ in our mind, we give the following.

\begin{defn}
We say that $R$ is an {\em anti-stable} (resp. {\em strongly anti-stable}) ring, if  $R:I$ is an invertible $I:I$-module (resp. $R:I \cong I:I$ as an $(I:I)$-module) for every $I \in \calF_R$.
\end{defn}

\noindent
Therefore, every Dedekind domain is anti-stable, and every UFD is a strongly anti-stable ring. Notice that when $R$ is a Noetherian semi-local ring, $R$ is anti-stable if and only if it is strongly anti-stable. Indeed, let $I \in\calF_R$, and set $A=I:I$, $M = R:I$. Then, $A$ is also a Noetherian semi-local ring, and therefore, because $M$ has rank one as an $A$-module, $M$ must be cyclic and free, once it is an invertible module over $A$.

Let  us  recall here that $R$ is said to be a {\em stable} ring, if every ideal $I$ of $R$ is {\em stable}, that is projective over its endomorphism ring $\End_RI$ (\cite{SV}). When $R$ is a Noetherian semi-local ring and $I \in \calF_R$, $I$ is a stable ideal of $R$ if and only if $I \in \calZ_R$, that is $I^2=aI$ for some $a \in I$ (\cite{L}, \cite[Proposition 2.2]{SV}). Our definition of anti-stable rings is, of course, different from that of stable rings. However, we shall later show in Corollary \ref{final} that the anti-stability of rings implies the stability of rings, under suitable conditions. 

First of all, we will show that $R$ is a strongly anti-stable ring if and only if every $I \in \calF_R$ is isomorphic to a trace ideal in $R$.

\begin{lemma}\label{5.0}
Let $I \in \calF_R$ and set $A = I:I$. Then the following conditions are equivalent.
\begin{enumerate}[{\rm $(1)$}]
\item $I \cong J$ as an $R$-module for some $J \in \calX_R$.
\item $I \cong \tau_R(I)$ as an $R$-module.
\item $R:I \cong A$ as an $R$-module.
\item $R:I \cong A$ as an $A$-module.
\item $R:I = a A$ for some unit $a$ of $\rmQ(R)$.
\end{enumerate}
\end{lemma}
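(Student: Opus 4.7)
The plan is to route every implication through condition (5), using as a hub the standard fact that isomorphisms between regular $R$-submodules of $\rmQ(R)$ are given by multiplication by units of $\rmQ(R)$.

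First I would record the key lemma: if $M, N \subseteq \rmQ(R)$ are $R$-submodules each meeting $W$, then every $R$-module isomorphism $\phi: M \to N$ equals multiplication by a unique unit $\gamma \in \rmQ(R)$. Fixing $x_0 \in M \cap W$ and setting $\gamma = \phi(x_0)/x_0$, the identity $x_0 \phi(m) = \phi(x_0 m) = m \phi(x_0) = \gamma x_0 m$ for every $m \in M$ gives $\phi(m) = \gamma m$ after cancelling $x_0$, and invertibility of $\phi$ forces invertibility of $\gamma$ in $\rmQ(R)$. Applying this with $M = R:I$ and $N = A$ (both regular $R$-submodules of $\rmQ(R)$ containing $1$) settles $(3) \Rightarrow (5)$. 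The implications $(5) \Rightarrow (4) \Rightarrow (3)$ are immediate: multiplication by $a$ is an $A$-linear isomorphism $A \to aA$, and every $A$-linear isomorphism is $R$-linear.

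To link (1) and (2) with the rest, I would use the identity $\tau_R(I) = (R:I) \cdot I$, which follows from the identification $\Hom_R(I,R) = R:I$ permitted by $I \in \calF_R$. Two supporting observations suffice. First, $\tau_R$ is invariant under $R$-isomorphism: an isomorphism $\psi: I \to J$ of $R$-submodules of $\rmQ(R)$ induces a bijection $\Hom_R(J,R) \to \Hom_R(I,R)$, $\phi \mapsto \phi \circ \psi$, with $\phi(J) = (\phi \circ \psi)(I)$, so $\tau_R(I) = \tau_R(J)$ as ideals. Second, $\tau_R(I)$ always lies in $\calX_R$: regularity follows from $\tau_R(I) \supseteq I$ (as $1 \in R:I$), and idempotence holds because any $\psi: \tau_R(I) \to R$ composed with the corestriction of $\phi \in \Hom_R(I,R)$ is again a member of $\Hom_R(I,R)$, so $\psi(\phi(x)) \in \tau_R(I)$ for every generator $\phi(x)$ of $\tau_R(I)$.

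With these in place the remaining implications are short. $(2) \Rightarrow (1)$ is trivial on taking $J = \tau_R(I)$, and $(1) \Rightarrow (2)$ follows from $\tau_R(I) = \tau_R(J) = J \cong I$ whenever $J \in \calX_R$. To close the loop, $(5) \Rightarrow (1)$: setting $J = aI$ one has $J = a \cdot AI = aA \cdot I = (R:I) \cdot I = \tau_R(I)$ (using $AI = I$), so $J \in \calX_R$ and $I \cong J$ via multiplication by $a$. Conversely, $(1) \Rightarrow (5)$: given $I \cong J \in \calX_R$, the isomorphism is multiplication by a unit $\gamma$, so $J = \gamma I$, and Corollary \ref{2.5} yields $R:I = \gamma(R:J) = \gamma(J:J) = \gamma A$. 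The main obstacle is the key lemma on isomorphisms, but it is entirely standard; the remainder is bookkeeping with the identifications $R:I = \Hom_R(I,R)$ and $A = I:I = \End_RI$, which are legitimate precisely because $I$ is regular.
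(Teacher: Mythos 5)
Your argument is correct and essentially the same as the paper's: both rest on the identification $\Hom_R(I,R)=R:I$ for regular $I$, the fact that an $R$-isomorphism between regular $R$-submodules of $\rmQ(R)$ is multiplication by a unit of $\rmQ(R)$, Corollary \ref{2.5}, and the identity $\tau_R(I)=(R:I)I$, with only a reorganization of the implication cycle (you route everything through (5), the paper through the chain $(1)\Leftrightarrow(2)$, $(3)\Leftrightarrow(4)\Leftrightarrow(5)$, $(1)\Rightarrow(3)$, $(5)\Rightarrow(2)$). The one point to tidy is the middle equality $\varphi(x_0m)=m\varphi(x_0)$ in your homothety lemma, which for $m\in\rmQ(R)\setminus R$ needs a one-line clearing of denominators (write $m=u/v$ with $v\in W$ and cancel $v$); the paper simply invokes this standard fact without proof.
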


\begin{proof}
(1) $\Leftrightarrow$ (2) Since $\tau_R(I) \in \calX_R$, the implication (2) $\Rightarrow$ (1) is clear. Since $J = \tau_R(J)$ for every $J \in \calX_R$ (Proposition \ref{2.3}), we have $\tau_R(I) = J$, if $J \in \calX_R$ and $I \cong J$ as an $R$-module, whence the implication (1) $\Rightarrow$ (2) follows.

(4) $\Rightarrow$ (3) This is clear.

(3) $\Rightarrow$ (4) Because the given isomorphsim $R : I \to A$ of $R$-modules is the restriction of the homothety of some unit $a$ of $\rmQ(R)$, it must be also a homomorphism of $A$-modules, whence $R:I \cong A$ as an $A$-module.

(4) $\Leftrightarrow$ (5) This is now clear.

(1) $\Rightarrow$ (3) We have $I = aJ$ for some unit $a$ of $\rmQ(R)$, whence $R:I = R : aJ = a^{-1}(R:J)$, and $I:I= aJ:aJ= J:J$. Thus, $R:I \cong I:I$ as an $R$-module, because $R:J = J:J$.

(5) $\Rightarrow$ (2) We have $\tau_R(I) = (R:I)I = aA{\cdot}I= aI$, whence $\tau_R(I)\cong I$ as an $R$-module.   
\end{proof}

For a Noetherian ring $R$, we set $\operatorname{Ht_1}(R) =\{\fkp \in \Spec R \mid \height_R\fkp = 1\}$. Let us note the following example of strongly anti-stable rings. We include a brief proof.

\begin{ex}[{\cite[Corollary 3.10]{KT2}}]\label{5.12}
For a Noetherian normal domain $R$, $R$ is a strongly anti-stable ring if and only if $R$ is a ${\rm UFD}$.
\end{ex}

\begin{proof}
Suppose that $R$ is a strongly anti-stable ring and let $\fkp \in \operatorname{Ht_1}(R)$. Then, since $R$ is normal, the $R$-module $\fkp$ is reflexive with  $\fkp: \fkp = R$, while $R: \fkp \cong \fkp: \fkp$ by Lemma \ref{5.0}. Hence, $\fkp \cong R$, so that $R$ is a UFD. Conversely, suppose that $R$ is a UFD and let $I \in \calX_R$. Then, $I  \cong J$ for some ideal $J$ of $R$ with $\grade_RJ\geq 2$, so that $I \cong \tau_R(I)$, since $J \in \calX_R$ by Corollary \ref{2.5}. Thus, $R$ is a strongly anti-stable ring.
\end{proof}

We explore one example of anti-stable rings which are not strongly anti-stable.

\begin{Example}\label{5.0a}
Let $k$ be a field and $S = k[t]$ the polynomial ring. Let $\ell \ge 2$ be an integer and set $R = k[t^2, t^{2\ell +1}]$. We consider the maximal ideal $I = (t^2-1, t^{2\ell + 1}-1)$ in $R$. Then, $\tau_R(I) = R$, and $I \not\cong J$ as an $R$-module for any $J \in \calX_R$. Therefore, $R$ is not a strongly anti-stable ring, while $R$ is an anti-stable ring, because $\dim R = 1$ and for every $M \in \Max R$, $R_M$ is an anti-stable local ring. See Theorem \ref{5.11} for details.
\end{Example}

\begin{proof}
Let $\fkp \in \Spec R$. If $I \not\subseteq \fkp$, then $IR_\fkp = R_\fkp$. If $I \subseteq \fkp$, then $t^2 \not\in \fkp =I$, whence $R_\fkp = S_\fkp$ is a DVR, because $R:S =(t^2, t^{2\ell+1})R$, so that $IR_\fkp \cong R_\fkp$. We now notice  that $I \subseteq \tau_R(I) \subseteq R$. Hence, either $I = \tau_R(I)$ or $\tau_R(I) = R$. If $I = \tau_R(I)$, then setting $\fkp =I$, we get $R_\fkp$ is a DVR and $IR_\fkp = \tau_{R_\fkp}({IR_\fkp}) \subsetneq R_\fkp$, while $\tau_{R_\fkp}(IR_\fkp)= R_\fkp$, because $IR_\fkp \cong R_\fkp$. This is absurd. Hence $\tau_R(I) = R$. Consequently, $I \not\cong J$ for any $J \in \calX_R$. In fact, if $I \cong J$ for some $J \in \calX_R$, then $J = \tau_R(J)= \tau_R(I)=R$, so that $\mu_R(I) = 1$. We write $I = fR$ with some monic polynomial $f \in R$. Let $\overline{k}$ denote the algebraic closure of $k$ and choose $a \in \overline{k}$ so that $f(a)=0$. Then, since $a^2 = a^{2\ell + 1}=1$, we get $a=1$, whence $f = (t-1)^n$ with $0 < n  \in H$, where $H = \left<2, 2\ell + 1\right>$ denotes the numerical semigroup generated by $2, 2\ell+1$. Therefore,  $2-n, (2\ell + 1)-n \in H$, because  $t^2-1, t^{2\ell + 1} -1 \in fR$. Hence, $n=2$, and $2\ell + 1 \in 2+H$, which is impossible.  Thus, $I$ is not a principal ideal of $R$, and $I \not\cong J$ for any $J \in \calX_R$.
\end{proof}

The key in our argument is the following, which plays a key role also in \cite{GI}.

\begin{lemma}\label{5.1}
Let $R$ be a strongly anti-stable ring. Then the correspondence $\rho : \calX_R \to \calY_R$ is surjective. More precisely, let $A \in \calY_R$ and set $J = R:A$. Then $J \in \calG_R=\calX_R \cap \calZ_R$.
\end{lemma}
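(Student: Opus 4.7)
The plan is to exploit the surjectivity of $\xi:\calZ_R\to \calY_R$ (Lemma \ref{3.6}(1)) in order to produce, from the given $A\in \calY_R$, an element of $\calZ_R$ to which the strong anti-stability can be applied.

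Concretely, given $A\in \calY_R$, I would choose a non-zerodivisor $a\in R$ with $aA\subseteq R$ and set $I=aA$. Then, as in the proof of Lemma \ref{3.6}(1), $I\in\calZ_R$ with $I^2=aI$ and $I:I=A$. Since $R$ is strongly anti-stable, Lemma \ref{5.0} applied to $I$ gives a unit $\alpha$ of $\rmQ(R)$ such that $R:I=\alpha A$. On the other hand $R:I=R:(aA)=a^{-1}(R:A)=a^{-1}J$, so setting $b=a\alpha$ I would obtain
\[
J=a(R:I)=a\alpha A=bA.
\]

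From this presentation of $J$ the good ideal structure falls out by direct computation. First, $b=b\cdot 1\in bA=J$, and
\[
J^2=(bA)^2=b^2 A=b\cdot bA=bJ,
\]
so $J\in\calZ_R$. Next, $R:J=R:(bA)=b^{-1}(R:A)=b^{-1}J$, and hence
\[
(b):_R J=b(R:J)\cap R=b\cdot b^{-1}J\cap R=J\cap R=J,
\]
which together with $J^2=bJ$ means $J\in\calG_R$ in the sense of Section 2. In particular $J\in\calX_R\cap\calZ_R$, as claimed by Proposition \ref{3.3}. Finally, surjectivity of $\rho$ follows for free: by Corollary \ref{2.5}, $\rho(J)=J:J=R:J=b^{-1}J=A$.

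The only nontrivial step is the application of Lemma \ref{5.0}, which converts the strong anti-stability hypothesis on the ideal $I=aA$ into the concrete equality $R:I=\alpha A$ for a unit $\alpha$ of $\rmQ(R)$; once this is in hand, the rest of the argument is a string of identifications among fractional ideals inside $\rmQ(R)$. No issue of well-definedness of $J$ as an element of $\calX_R$ arises, since that is built into the definition of $\eta$ at the start of Section 2.
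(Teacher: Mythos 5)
Your proof is correct and follows essentially the same route as the paper: both apply Lemma \ref{5.0} to the ideal $aA$ (using $aA:aA=A$) to conclude that $R:A$ is a unit multiple of $A$, hence lies in $\calG_R=\calX_R\cap\calZ_R$ and maps to $A$ under $\rho$. The only cosmetic difference is that you invoke condition (5) of Lemma \ref{5.0} and verify the good-ideal equations $J^2=bJ$, $(b):_RJ=J$ directly, whereas the paper uses condition (1) and deduces membership in $\calX_R\cap\calZ_R$ via $A=J:J=R:J$.
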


\begin{proof}
Let $A \in \calY_R$ and choose $b \in W$ so that $bA \subseteq R$. Then, since $bA \in \calF_R$, by Lemma \ref{5.0} $bA \cong J$ as an $R$-module for some $J \in \calX_R$. Let us write $J = aA$ with $a$ a unit of $\rmQ(R)$ (hence $a \in J \cap W$). We then have $J:J=aA:aA = A:A= A$, whence $A=J:J= R:J=R:aA= a^{-1}(R:A)$, so that $R:A=aA=J \in \calX_R \cap \calZ_R$. Therefore, $\rho(J) = J:J = A$, and the correspondence $\rho : \calX_R \to \calY_R$ is surjective. 
\end{proof}

Let us recall one of the fundamental results on stable rings, which we need to prove Theorem \ref{5.6}.

\begin{prop}[{\cite[Lemma 3.2, Theorem 3.4]{SV}}]\label{5.2a}
Let $R$ be a Cohen-Macaulay semi-local ring and assume that $\dim R_M = 1$ for every $M \in \Max R$. If $\rme(R_M) \le 2$ for every $M \in \Max R$, then $R$ is a stable ring.
\end{prop}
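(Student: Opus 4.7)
\emph{Plan.} The strategy is to reduce to the local case and then perform a case analysis on $\rme(R)$. The property ``every regular ideal $I$ satisfies $I^2=aI$ for some $a \in I$'' localizes well: if the conclusion holds after localizing at each $M \in \Max R$, one can patch the local witnesses $a_M$ to a single element of $I$ using prime avoidance together with the finiteness of $\Max R$. Thus I may assume $(R,\m)$ is a one-dimensional Cohen-Macaulay local ring with $\rme(R) \le 2$.

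If $\rme(R)=1$, Nagata's criterion gives that $R$ is regular, hence a discrete valuation ring, so every ideal is principal and stability is trivial. Suppose therefore that $\rme(R)=2$. Via the faithfully flat extension $R \to R(X)=R[X]_{\m R[X]}$, which preserves dimension, the Cohen-Macaulay property, multiplicity, and along which stability of regular ideals faithfully descends, I may assume the residue field $R/\m$ is infinite. Since $\mu_R(\m) \le \rme(R)=2$ in this setting, either $R$ is again a DVR, or $\mu_R(\m)=2$ and $R$ has maximal embedding dimension: $\m^2=a\m$ for a non-zerodivisor $a \in \m$ which is a minimal reduction of $\m$.

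For any proper regular ideal $I$, necessarily $\m$-primary since $\dim R=1$, I would use reduction theory (available because $R/\m$ is now infinite) to produce a principal reduction $b \in I$ with $I^{n+1}=bI^n$ for $n \gg 0$. Forcing $n=1$ amounts to proving equality in Northcott's lower bound
\[
e_1(I) \ge \rme(I) - \ell_R(R/I)
\]
on the first Hilbert coefficient of $I$, since it is classical that $r_b(I) \le 1$ if and only if $I^2=bI$, and the latter happens precisely at Northcott equality. Combining this with Sally's upper bound $e_1(I) \le \binom{\rme(I)}{2}$, the identity $\rme(I)=\ell_R(R/bR)$ valid for a minimal reduction $b$, and the rigid structural constraint $\m^2=a\m$, one pins $e_1(I)$ to its minimum value.

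The principal obstacle is making the last squeeze rigorous for \emph{every} proper regular ideal $I$, not merely for $I=\m$. Doing so requires a careful analysis of how the associated graded ring $\gr_I(R)$ inherits the Cohen-Macaulay property from $R$ when $\rme(R) \le 2$, and this is the technical core of \cite[Lemma 3.2, Theorem 3.4]{SV}.
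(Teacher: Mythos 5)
The paper offers no proof of this proposition at all: it is quoted verbatim from Sally--Vasconcelos \cite[Lemma 3.2, Theorem 3.4]{SV} precisely because the authors do not wish to reprove it. A blind proof must therefore stand on its own, and yours does not: your final paragraph explicitly defers ``the technical core'' to the very reference being proved, so what you have written is an outline of a strategy rather than a proof. Worse, the strategy's key step does not work as described. You propose to force the reduction number of an arbitrary regular (hence $\m$-primary) ideal $I$ to be $\le 1$ by squeezing $e_1(I)$ between Northcott's lower bound $e_1(I)\ge \rme(I)-\ell_R(R/I)$ and Sally's upper bound $e_1(I)\le \binom{\rme(I)}{2}$. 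These two bounds are nowhere near each other in general: already for $I=\m^n$ in a ring of multiplicity $2$ one has $\rme(I)=2n$, so the upper bound grows quadratically in $n$ while the lower bound grows linearly, and no ``pinning'' of $e_1(I)$ to its minimum value results. The actual argument of \cite{SV} runs differently: from $\rme(R)\le 2$ one gets $\mu_R(I)\le \rme(R)\le 2$ for every regular ideal $I$ of the one-dimensional Cohen--Macaulay local ring, and then one shows directly (this is their Lemma 3.2) that a two-generated regular ideal admitting a principal reduction satisfies $I^2=bI$; no Hilbert-coefficient squeeze is involved, and the passage to an infinite residue field via $R(X)$ is used only to guarantee the principal reduction.

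Two further points. First, ``stable ring'' in this paper means that \emph{every} ideal is projective over its endomorphism ring; the equivalence with $I^2=aI$ is stated only for $I\in\calF_R$, so your reduction silently ignores ideals consisting of zerodivisors (which exist when $R$ is not a domain), and this case must either be treated or be shown to reduce to the regular one. Second, your globalization step is more awkward than necessary: there is no need to patch local elements $a_M$ by prime avoidance, since for a finitely generated ideal $I$ over a Noetherian ring the formation of $\End_RI$ commutes with localization and projectivity of a finitely generated module is a local condition, which is exactly how the paper itself globalizes in the proof of Theorem \ref{final}. So the local reduction is fine, but the heart of the matter --- the local case with $\rme(R)=2$ --- is exactly what remains unproved in your proposal.
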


We should compare the following theorem with [11, Theorem 3.6].

\begin{thm}\label{5.6}
Let $R$ be a Cohen-Macaulay local ring of dimension one.  Then, $R$ is an anti-stable ring, if and only if $\rme(R) \le 2$.
\end{thm}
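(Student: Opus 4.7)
The plan is to handle the two directions separately. For $(\Leftarrow)$ I combine the stability theorem of Sally--Vasconcelos with Theorem~\ref{3.9a}; for $(\Rightarrow)$ I use Lemma~\ref{5.1} and Theorem~\ref{1.2} to narrow the structural possibilities, then rule out the non-Gorenstein case of Theorem~\ref{1.2} by constructing an explicit ideal $I$ for which $R:I\not\cong I:I$ as $R$-modules. Throughout I use the observation, recorded after the definition of anti-stability, that for $R$ Noetherian local, anti-stability and strong anti-stability coincide.

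For $(\Leftarrow)$, assume $\rme(R)\le 2$. Then $R$ is Gorenstein (either a DVR, or embedding dimension two, so $\widehat R$ is a hypersurface), and Proposition~\ref{5.2a} shows $R$ is stable. Given $I\in\calF_R$, stability yields $I^{2}=aI$ for some $a\in I\cap W$, so by Proposition~\ref{3.0}(2), $I=aA$ with $A:=I:I$, and hence $R:I=a^{-1}(R:A)$. Since $R$ is Gorenstein, $R:A=\Hom_R(A,R)$ is a canonical module of the Cohen--Macaulay semi-local ring $A$; by Theorem~\ref{3.9a} (applicable because $\rme(R)\le 2$), $A$ is Gorenstein, so $R:A$ is principal over $A$. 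Consequently $R:I$ is principal over $A$, showing $R$ is strongly anti-stable.

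For $(\Rightarrow)$, assume $R$ is anti-stable. Lemma~\ref{5.1} gives the surjectivity of $\rho:\calX_R\to\calY_R$, and Theorem~\ref{1.2} then forces either (a) $R$ is Gorenstein or (b) the conditions of Theorem~\ref{2.8} hold. The main task is to rule out (b). Under (b), $B=\fkm:\fkm$ is a DVR with $\rmJ(B)=\fkm$, there is no proper intermediate field between $R/\fkm$ and $B/\fkm$, and $n:=[B/\fkm:R/\fkm]=\rmr(R)+1\ge 3$. I fix a regular parameter $t$ of $B$ (so $\fkm=tB$) and any $\omega\in B\setminus R$; the no-intermediate-field hypothesis gives $B/\fkm=(R/\fkm)(\overline\omega)$, and $n\ge 3$ forces $\overline\omega^{-1}\notin(R/\fkm)+(R/\fkm)\overline\omega$ (otherwise $\overline\omega$ would satisfy a polynomial of degree at most two over $R/\fkm$). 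I then take $I=(t,\omega t)R\subseteq\fkm$ and verify directly that
\[
I:I=(R+\omega R)\cap\omega^{-1}(R+\omega R)=R \quad\text{and}\quad R:I=t^{-1}(R\cap\omega^{-1}R)=t^{-1}\fkm=B,
\]
where in each line the final equality comes from the preceding dimension count modulo $\fkm$ (for $R\cap\omega^{-1}R$ one checks that $\omega r\in R$ with $r\notin\fkm$ forces $\overline\omega\in R/\fkm$, a contradiction). Since $\mu_R(B)=n\ge 3>1=\mu_R(R)$, $R:I\not\cong I:I$ as $R$-modules, contradicting anti-stability.

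Hence $R$ is Gorenstein. To deduce $\rme(R)\le 2$, I use the surjectivity of $\xi:\calZ_R\to\calY_R$ (Lemma~\ref{3.6}(1)): for each $A\in\calY_R$ choose $I\in\calZ_R$ with $I:I=A$, write $I=aA$ for some $a\in I$, and apply anti-stability to obtain $R:A\cong A$ as an $A$-module. Since $R$ is Gorenstein, $R:A$ is a canonical module of $A$, so $A$ is Gorenstein. Thus every $A\in\calY_R$ is Gorenstein, and Theorem~\ref{3.9a} yields $\rme(R)\le 2$. The main obstacle is the case-(b) computation above: the delicate point is that $n\ge 3$ is exactly what forces the intersection of the two two-dimensional subspaces $(R/\fkm)+(R/\fkm)\overline\omega$ and $(R/\fkm)+(R/\fkm)\overline\omega^{-1}$ of $B/\fkm$ to be just $R/\fkm$, thereby separating $I:I$ from $R:I$.
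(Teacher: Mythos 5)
Your proof is correct. It shares the paper's overall skeleton --- Proposition \ref{5.2a} together with the Gorenstein property of $R$ for sufficiency, and Lemma \ref{5.1} together with the structure theory of Theorems \ref{1.2} and \ref{2.8} for necessity --- but each step is executed differently, and all three substitutions check out. For sufficiency the paper avoids canonical modules: it sets $J=(R:I)I=\tau_R(I)$ and computes $J:J=[R:(R:I)]:I=I:I$ using $R:(R:I)=I$ over the one-dimensional Gorenstein ring $R$, then obtains $I\cong I:I\cong J:J\cong J$ from stability of $I$ and $J$; your route through Theorem \ref{3.9a} and the principality of the canonical ideal $R:A$ of the Gorenstein semi-local ring $A=I:I$ is a clean alternative. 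For the exclusion of the non-Gorenstein case, the paper quotes $\calX_R=\{\fkm,R\}$ from Theorem \ref{2.8}\,(4) and counts generators of $(x_1,\dots,x_{n-1})\subsetneq\fkm$; you instead build the explicit ideal $I=(t,\omega t)$ and compute $I:I=R$ and $R:I=B$ from the degree-$n$ extension $R/\fkm\subseteq B/\fkm$ with $n\ge 3$. This computation is right --- the two points to verify are that $\omega$ is a unit of $B$ (which holds because $\rmJ(B)=\fkm\subseteq R$ while $\omega\notin R$) and that an element $a+b\overline{\omega}$ with $b\ne 0$ cannot lie in $\overline{\omega}^{-1}\bigl(k+k\overline{\omega}\bigr)$ when $[k(\overline{\omega}):k]\ge 3$ --- and it is somewhat more self-contained than the paper's, since it bypasses assertion (4) of Theorem \ref{2.8}. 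Finally, for $\rme(R)\le 2$ the paper applies Lemma \ref{5.1} directly to $B=\fkm:\fkm$ to get $\fkm=R:B\in\calG_R$, hence $\fkm^2=a\fkm$ and $\rme(R)=\rmr(R)+1=2$, which is shorter than your detour through ``every $A\in\calY_R$ is Gorenstein'' and Theorem \ref{3.9a}, though both are valid. One point to keep explicit: Lemma \ref{5.1} and the module isomorphism $R:I\cong I:I$ are statements about \emph{strong} anti-stability, so the identification of the two notions over a Noetherian local ring, which you record at the outset, is doing genuine work in the necessity direction.
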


\begin{proof} Suppose that $\rme(R) \le 2$. Let $I \in \calF_R$ and set $A = I:I$. Then, by Proposition \ref{5.2a}  $R$ is a stable ring. Hence, $I^2 = aI$ for some $a \in I$, whence $A = a^{-1}I$. Therefore, $I \cong A$ as an $R$-module. We now consider $J = (R:I)I$. Then, $J = \tau_R(I) \in \calX_R$, whence $$J : J = R:J = R : (R:I)I =  [R:(R:I)]:I = I:I,$$ where the last equality follows from the fact that $R$ is a Gorenstein ring. Consequently, $A = J:J \cong J$ (since $J \in \calF_R$), so that $I \cong J = \tau_R(J)$. Thus, $R$ is an anti-stable ring.

Conversely, suppose that $R$ is an anti-stable ring. First of all, we will show that $R$ is a Gorenstein ring. Assume the contrary. Then, passing to the $\m$-adic completion of $R$,  by Proposition \ref{2.7.6} and Theorem \ref{2.8} we get $\calX_R = \{\m, R\}$. Consequently, either $I \cong \m$ or $I \cong R$, for every ideal $I \in \calF_R$. We set $n = \mu_R(\m)$ and write $\m =(x_1, x_2,\ldots, x_n)$ with non-zerodivisors $x_i$ of $R$. Then, $n >2$ since $R$ is not a Gorenstein ring, and setting $I = (x_1, x_2,\ldots, x_{n-1})$, we have either $I \cong \m$ or $I \cong R$, both of which violates the fact that $n = \mu_R(\m) >2$. Thus $R$ is a Gorenstein ring. We want to show $\rme(R) \le 2$. Assume that $\rme(R) \ge 2$ and consider $B = \m :\m$. Then, $B \in \calY_R$ and $R \ne B$, because $R$ is not a DVR. Consequently, because $\m = R : B$, by Lemma \ref{5.1}  $\m^2 = a\m$ for some $a \in \m$, which implies $\rme(R) = 2$, since $R$ is a Gorenstein ring.
\end{proof}

We say that a Noetherian ring $R$ satisfies the condition ${\rm (S_1)}$ of Serre, if $\depth R_\fkp \ge \inf\{1, \dim R_\fkp \}$ for every $\fkp \in \Spec R$.

\begin{cor}\label{5.3}
Let $R$ be a Noetherian ring and suppose that $R$ satisfies ${\rm (S_1)}$. Then, $\rme(R_\fkp) \le 2$ for every $\fkp \in \operatorname{Ht_1}(R)$, if $R$ is an anti-stable ring. 
\end{cor}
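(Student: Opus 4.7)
The plan is to reduce to Theorem \ref{5.6} by proving that $R_\fkp$ is an anti-stable ring for every $\fkp \in \operatorname{Ht_1}(R)$. First, under $(S_1)$ the associated primes of $R$ have height zero, so the height-one prime $\fkp$ is not associated and hence contains a non-zerodivisor of $R$; moreover $\depth R_\fkp \ge 1 = \dim R_\fkp$, so $R_\fkp$ is a Cohen-Macaulay local ring of dimension one, to which Theorem \ref{5.6} applies.

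The decisive step is the following lifting: every regular ideal $I$ of $R_\fkp$ has the form $\widetilde{I}R_\fkp$ for some regular ideal $\widetilde{I}$ of $R$. Concretely, I would set $\widetilde{I}=\{\,r \in R \mid r/1 \in I\,\}$. Since $R_\fkp$ is one-dimensional Cohen-Macaulay and $I$ is regular, $I$ is $\fkp R_\fkp$-primary, so $\fkp^n R_\fkp \subseteq I$ for some $n \ge 1$, and therefore $\fkp^n \subseteq \widetilde{I}$. Because $\fkp$ contains a non-zerodivisor $x$ of $R$, one has $x^n \in \widetilde{I}$, so $\widetilde{I} \in \calF_R$; and $\widetilde{I}R_\fkp = I$ is immediate from the definition.

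With the lift in place, the standard commutation of $\Hom$ with flat localization (applicable since $\widetilde{I}$ is finitely presented over the Noetherian ring $R$) yields
\[
(R:\widetilde{I})_\fkp \cong R_\fkp:I \quad \text{and} \quad (\widetilde{I}:\widetilde{I})_\fkp \cong I:I.
\]
The anti-stability of $R$ says that $R:\widetilde{I}$ is invertible over $\widetilde{I}:\widetilde{I}$, and invertibility is preserved by the base change $\widetilde{I}:\widetilde{I} \to I:I$, so $R_\fkp:I$ is invertible over $I:I$. Since $I$ was arbitrary among regular ideals of $R_\fkp$, the local ring $R_\fkp$ is anti-stable, and Theorem \ref{5.6} gives $\rme(R_\fkp) \le 2$.

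The only real obstacle is the lifting step, and this is precisely where the hypothesis $(S_1)$ earns its keep: without it one could not deduce that $\fkp$ itself contains a non-zerodivisor of $R$, and hence could not guarantee that the contraction $\widetilde{I}$ is regular in $R$ — which is exactly what is needed to invoke anti-stability of $R$ on $\widetilde{I}$.
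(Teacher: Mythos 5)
Your proposal is correct and follows essentially the same route as the paper: contract a regular ideal $I$ of $R_\fkp$ to the ($\fkp$-primary, hence regular by $(S_1)$) ideal $J=I\cap R$, apply the anti-stability of $R$ to $J$, use the commutation of $\Hom$ with localization to transfer invertibility of $R:J$ over $J:J$ down to $R_\fkp:I$ over $I:I$, and conclude with Theorem \ref{5.6}. The only cosmetic differences are that the paper first reduces to $I\ne R_\fkp$ before asserting $\fkp R_\fkp$-primariness and additionally observes that the localized invertible module is cyclic over the semi-local ring $I:I$, neither of which changes the substance of your argument.
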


\begin{proof}
Let $\fkp \in \operatorname{Ht_1}(R)$ and set $A = R_\fkp$. Hence $A$ is a Cohen-Macaulay local ring of dimension one. Let $I \in \calF_A$ and set $J = I \cap R$. We will show that $A:I$ is a cyclic $(I:I)$-module. We may assume that $I \ne A$. Hence, $J$ is a $\fkp$-primary ideal of $R$, since $I$ is a $\fkp R_\fkp$-primary ideal of $A =R_\fkp$. Hence, because $J \in \calF_R$ (remember that $R$ satisfies ${\rm (S}_1$$)$), $R:J$ is a projective $(J:J)$-module. Therefore, $A:I = [R:J]_\fkp$ is a cyclic module over $I:I = [J:J]_\fkp$, since it has rank one over the semi-local ring $I:I$. Thus, $\rme(A) \le 2$ by Theorem \ref{5.6}.
\end{proof}

We now come to the main results of this section.

\begin{thm}\label{5.11}
Let $R$ be a Noetherian ring and suppose that $R$ satisfies ${\rm (S_1)}$. Let us consider the following four conditions.
\begin{enumerate}[{\rm $(1)$}]
\item $R$ is anti-stable.
\item $R$ is strongly anti-stable.
\item Every $I \in \calF_R$ is isomorphic to $\tau_R(I)$.
\item $\rme(R_\fkp) \le 2$ for every $\fkp \in \operatorname{Ht_1}(R)$.
\end{enumerate}
Then, we have the implications $(3) \Leftrightarrow (2) \Rightarrow (1) \Rightarrow (4)$. If $R$ is semi-local $($resp. $\dim R=1$$)$, then the implication $(1) \Rightarrow (2)$ $($resp. $(4) \Rightarrow (1)$$)$ holds true.
\end{thm}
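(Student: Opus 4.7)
The plan is to dispose of the general implications $(2)\Leftrightarrow(3)$, $(2)\Rightarrow(1)$, and $(1)\Rightarrow(4)$ first, and then handle the two conditional implications $(1)\Rightarrow(2)$ (under the semi-local hypothesis) and $(4)\Rightarrow(1)$ (under $\dim R=1$) separately. The equivalence $(2)\Leftrightarrow(3)$ will follow immediately from Lemma~\ref{5.0}, applied ideal by ideal: condition (4) of that lemma is the defining property of strong anti-stability, while condition (2) there is exactly condition (3) of the present theorem. The implication $(2)\Rightarrow(1)$ is purely formal, since $R:I\cong I:I=A$ as $A$-modules forces $R:I$ to be a free, hence invertible, $A$-module of rank one. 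The implication $(1)\Rightarrow(4)$ is precisely Corollary~\ref{5.3}.

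For $(1)\Rightarrow(2)$ in the semi-local case, the key first step is to note that for any $I\in\calF_R$, the ring $A=I:I=\End_R I$ is module-finite over $R$---any presentation $R^n\twoheadrightarrow I$ induces an embedding $\End_R I\hookrightarrow I^n$---hence is Noetherian and, by integrality over the semi-local ring $R$, semi-local. Over a Noetherian semi-local ring every invertible module is free of rank one, so the anti-stability of $R$ promotes directly to the conclusion $R:I\cong A$ as an $A$-module, which is exactly (2).

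For $(4)\Rightarrow(1)$ in dimension one, the plan is to verify invertibility of $M=R:I$ over $A=I:I$ by checking the condition locally at each $\fkM\in\Max A$. Setting $\fkm=\fkM\cap R\in\Max R$ (which is a maximal contraction by integrality of $A$ over $R$), the case $\dim R_\fkm=0$ forces $R_\fkm$ to be a field (by $(\mathrm{S}_1)$) and $I_\fkm=R_\fkm$ (since $I$ contains a non-zerodivisor), so $A_\fkm=M_\fkm=R_\fkm$ and there is nothing to check. In the remaining case $\dim R_\fkm=1$, the ring $R_\fkm$ is a one-dimensional Cohen-Macaulay local ring with $\rme(R_\fkm)\le 2$, and Theorem~\ref{5.6} yields that $R_\fkm$ is anti-stable, whence $M_\fkm$ is invertible over $A_\fkm$. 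The main obstacle I anticipate is passing from invertibility over the semi-local ring $A_\fkm$ to invertibility at each $\fkM\in\Max A$ lying over $\fkm$, since a single $\fkm$ may have several preimages in $\Max A$; the clean resolution is that, by the module-finiteness of $A$ over $R$, one has the decomposition $A_\fkm=\prod_{\fkM\cap R=\fkm}A_\fkM$, so the invertibility of $M_\fkm$ over $A_\fkm$ descends to each local factor $A_\fkM$. Together with the fact that $M$ is finitely presented over the Noetherian ring $A$, this delivers global invertibility of $M$ over $A$, which is the required statement of (1).
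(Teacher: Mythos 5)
Your proof is correct and follows essentially the same route as the paper: $(2)\Leftrightarrow(3)$ and $(2)\Rightarrow(1)$ via Lemma \ref{5.0}, $(1)\Rightarrow(4)$ via Corollary \ref{5.3}, the semi-local case by noting $I:I$ is a module-finite (hence Noetherian semi-local) extension of $R$ over which an invertible module is free of rank one, and $(4)\Rightarrow(1)$ by localizing and invoking Theorem \ref{5.6}, with your extra care about the local factors of $A_\fkm$ being a harmless refinement of the paper's argument. One small slip: when $\dim R_\fkm=0$, the condition $(\mathrm{S}_1)$ does not force $R_\fkm$ to be a field (consider $R=k[x]/(x^2)\times k[t]$), but this does not affect your argument, since such an $\fkm$ is a minimal prime and therefore the non-zerodivisor contained in $I$ lies outside $\fkm$, giving $I_\fkm=R_\fkm$ and $A_\fkm=M_\fkm=R_\fkm$ exactly as you conclude.
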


\begin{proof}
$(3) \Leftrightarrow (2) \Rightarrow (1) \Rightarrow (4)$ See Lemma \ref{5.0} and  Theorem \ref{5.3}.

If $R$ is semi-local, then every birational module -finite extension of $R$ is also semi-local, so that the implication $(1) \Rightarrow (2)$ follows.

Suppose that $\dim R = 1$. Let $I \in \calF_R$ and set $A = I:I$. Then, by Theorem \ref{5.6} $R_M:IR_M=[R:I]_M$ is a cyclic $A_M$-module for every $M \in \Max R$, so that $R:I$ is an invertible $A$-module. Hence, the implication $(4) \Rightarrow (1)$ follows.
\end{proof}


\begin{thm}\label{final}
Let $R$ be a Cohen-Macaulay ring with $\dim R_M = 1$ for every $M \in \Max R$. If $R$ is an anti-stable ring, then $R$ is a stable ring. 
\end{thm}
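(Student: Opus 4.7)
The plan is to combine Theorem \ref{5.11} (which turns anti-stability into a bound on multiplicities) with Proposition \ref{5.2a} (which turns that bound into stability in the semi-local setting), and then to promote local stability of each $R_M$ to stability of $R$ itself via a descent using the module-finiteness of $\End_R I$ over $R$.

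First, since $R$ is Cohen-Macaulay it automatically satisfies Serre's condition $(\rmS_1)$, so Theorem \ref{5.11} applies and the anti-stability of $R$ gives $\rme(R_\fkp) \le 2$ for every $\fkp \in \operatorname{Ht_1}(R)$. The hypothesis $\dim R_M = 1$ for each $M \in \Max R$ means every maximal ideal of $R$ has height one, so $\rme(R_M) \le 2$ for all $M \in \Max R$; Proposition \ref{5.2a} applied to the local ring $R_M$ then shows that each $R_M$ is a stable ring.

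Now let $I$ be an arbitrary ideal of $R$ and set $A := \End_R I$. Since $R$ is Noetherian, $I$ is finitely generated; choosing generators $i_1,\dots,i_n$ of $I$ yields an embedding of $R$-modules $A \hookrightarrow I^n$ via $\varphi \mapsto (\varphi(i_1),\dots,\varphi(i_n))$, so $A$ is a finitely generated $R$-module. In particular $R \subseteq A$ is module-finite (hence integral), so every maximal prime of $A$ contracts to a maximal ideal of $R$; and because $I$ is finitely presented, $\End_R(-)$ commutes with localization, giving $A \otimes_R R_M \cong \End_{R_M}(I_M)$ for each $M \in \Max R$. To show $I$ is projective over $A$ it suffices to check that $I_Q$ is projective over $A_Q$ for every maximal prime $Q$ of $A$. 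Such a $Q$ contracts to some $M := Q \cap R \in \Max R$; stability of $R_M$ gives $I_M$ projective over $A_M := A \otimes_R R_M = \End_{R_M}(I_M)$, and a further localization at $Q A_M$ yields $I_Q = (I_M)_Q$ projective over $A_Q = (A_M)_Q$, as needed. Hence every ideal $I$ of $R$ is stable.

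The principal technical obstacle lies in the last descent step for ideals $I$ that are not regular: for such $I$ the ring $A = \End_R I$ is not visibly a birational extension of $R$ inside $\rmQ(R)$, and so the definition of anti-stability does not directly apply to $I$. The Noetherian hypothesis rescues the argument by forcing $A$ to be module-finite over $R$, which supplies the well-behaved contraction $\Max A \to \Max R$ needed to transport the pointwise stability of the $R_M$ to projectivity of $I$ as an $A$-module.
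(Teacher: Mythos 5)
Your proof is correct and follows essentially the same route as the paper: obtain $\rme(R_M)\le 2$ for every $M\in\Max R$ from anti-stability via Corollary \ref{5.3} (equivalently, the implication $(1)\Rightarrow(4)$ of Theorem \ref{5.11}), conclude each $R_M$ is stable by Proposition \ref{5.2a}, and then globalize the projectivity of $I$ over $A=\End_R I$ by localizing at the maximal ideals of $A$. The only difference is that you spell out the local-to-global descent (module-finiteness of $A$, compatibility of $\End$ with localization, and finitely presented plus locally free implies projective), which the paper leaves implicit.
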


\begin{proof}
For every $M \in \Max R$, $\rme(R_M) \le 2$ by Corollary \ref{5.3}. Let $I$ be an arbitrary ideal of $R$ and set $A = \End_RI$. Then, because $R_M$ is a stable ring by Proposition \ref{5.2a}, for every $M \in \Max R$ $IR_M$ is a projective $A_M$-module, so that $I$ is a projective $A$-module. Thus, $R$ is a stable ring.
\end{proof}

\begin{acknowledgement} {\rm The authors are grateful to M. Hashimoto and K. Kurano. Examples \ref{hashimoto} and \ref{not self-injective} are deep in debt of them. Example \ref{hashimoto} was pointed by Hashimoto, and Kurano informed the authors about the paper \cite{F}. The authors are also very grateful to R. Takahashi for his detailed comments and kind suggestions about the manuscript, which he gave the authors while they were preparing for the final version. }
\end{acknowledgement}


\end{document}